\newcommand{\R}{\mathbb{R}}
\newcommand{\N}{\mathbb{N}}
\newcommand{\Z}{\mathbb{Z}}
\newcommand{\SL}{\text{SL}}
\newcommand{\p}{\textbf{p}}
\newcommand{\q}{\textbf{q}}
\newcommand{\h}{\normalfont \textbf{h}}
\newcommand{\f}{\normalfont \textbf{f}}
\newcommand{\bb}{\normalfont \textbf{b}}
\newcommand{\cc}{\normalfont \textbf{c}}
\newcommand{\e}{\epsilon}
\newcommand{\de}{\delta}
\newcommand{\la}{\lambda}
\newcommand{\La}{\Lambda}
\newcommand{\Ha}{\mathscr{H}}
\newcommand{\Pa}{\mathscr{P}}
\newcommand{\D}{\mathcal{D}}
\newcommand{\Sing}{\normalfont \textbf{Sing}}
\newcommand{\VSing}{\normalfont \textbf{VSing}}
\newcommand{\BA}{\normalfont \textbf{BA}}
\newtheorem{theorem}{Theorem}[section]
\newtheorem{lemma}[theorem]{Lemma}
\newtheorem{proposition}[theorem]{Proposition}
\newtheorem{corollary}[theorem]{Corollary}
\theoremstyle{definition}
\newtheorem{definition}[theorem]{Definition}
\newtheorem{example}[theorem]{Example}
\theoremstyle{remark}
\newtheorem{remark}[theorem]{Remark}
\numberwithin{equation}{section}
\title{Fractional dimension related to badly approximable matrices associated with higher successive minima}
\author{Hao Xing}
\begin{document}
\date{}
\maketitle

\begin{abstract}
In this article we introduce the notion of badly approximable matrices of higher order using higher sucessive minima in $\R^d$. We will prove that for order less than $d$, they have Lebesgue measure zero and the gaps between them still have full Hausdorff dimension.
\end{abstract}

\section{Introduction}

Throughout, all norms of vectors in an Euclidean space without specification are assumed to be the maximum Euclidean norm.

\subsection{Dirichlet's Theorem and Diophantine Approximation}

Diophantine approximation is a branch of number theory studying the approximation of real numbers by rational numbers. The first known result of this kind is due to Adrien-Marie Legendre, which can be proved by the Pigeonhole Principle:

\begin{theorem}[Legendre, 1808]
For any real number $x$ and every $Q > 1$, there exists an integer vector $(p, q) \in \Z^2$
such that
$$|xq-p|<\frac{1}{Q} \text{ and } 0<q<Q.$$
\end{theorem}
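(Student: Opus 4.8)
The plan is to invoke the Dirichlet box (pigeonhole) principle, exactly as the heading of the section suggests. First I would dispose of the trivial case $x \in \Z$ by taking $(p,q) = (x,1)$, for which $|xq-p| = 0 < 1/Q$ and $0 < 1 < Q$. So assume $x \notin \Z$ and set $N := \lceil Q\rceil$, noting $N \ge 2$ and $1/N \le 1/Q$. I would then consider the $N+1$ fractional parts
$$\{0\cdot x\},\ \{1\cdot x\},\ \dots,\ \{N x\}\ \in\ [0,1),$$
and partition $[0,1)$ into the $N$ half-open subintervals $[k/N,(k+1)/N)$ for $k = 0,1,\dots,N-1$. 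Since there are $N+1$ points and only $N$ subintervals, two of the points, say $\{q_1 x\}$ and $\{q_2 x\}$ with $0 \le q_1 < q_2 \le N$, lie in a common subinterval, so that $|\{q_2 x\} - \{q_1 x\}| < 1/N$.

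Next I would put $q := q_2 - q_1$ and $p := \lfloor q_2 x\rfloor - \lfloor q_1 x\rfloor$. Then $q \in \{1,\dots,N\}$, $p \in \Z$, and, since $xq - p = \{q_2 x\} - \{q_1 x\}$,
$$|xq - p| \;=\; \bigl|\{q_2 x\} - \{q_1 x\}\bigr| \;<\; \frac1N \;\le\; \frac1Q .$$
This already yields a solution with $0 < q \le N = \lceil Q\rceil$ and error strictly below $1/Q$, which is essentially the full content of the statement apart from the endpoint of the range of $q$.

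The one genuinely delicate point is trimming the conclusion from $0 < q \le \lceil Q\rceil$ down to $0 < q < Q$. Because $\lceil Q\rceil - 1 < Q$ always holds, we are finished as soon as \emph{some} colliding pair has gap $q_2 - q_1 \le N-1$, and the only obstruction is the degenerate scenario in which the collision is forced onto the pair $(q_1,q_2) = (0,N)$. In that case $\{Nx\} \in [0,1/N)$, so the distance from $Nx$ to $\Z$ is less than $1/N$; equivalently $x$ lies within $1/N^{2}$ of some rational $m/N$. I would handle this thin family by a short separate argument — treating rational $x$ with reduced denominator at most $N-1$ directly (there $xq-p = 0$ for an explicit $q < Q$), and, for the remaining $x$, locating a suitable $q \in \{1,\dots,N-1\}$ (for instance one near $N-1$, or a small multiple of the reduced denominator of $m/N$) whose distance to $\Z$ after multiplication by $x$ is still below $1/Q$. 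I expect this endpoint bookkeeping to be the main — indeed essentially the only — obstacle, since the pigeonhole core is immediate.
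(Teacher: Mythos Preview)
The paper does not actually prove this theorem; it only remarks that it ``can be proved by the Pigeonhole Principle,'' and your argument is exactly that standard proof. The body of it is correct and complete: with $N=\lceil Q\rceil$ the $N+1$ fractional parts placed in $N$ boxes force a collision, yielding integers $p,q$ with $1\le q\le N$ and $|qx-p|<1/N\le 1/Q$.

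You are right that trimming $q\le N$ down to the strict $q<Q$ is the only remaining issue, but your proposed ``short separate argument'' cannot succeed, because the statement as printed---with \emph{both} inequalities strict---is in fact false. For $x=\tfrac12$ and $Q=2$ the constraint $0<q<2$ forces $q=1$, and then $|x-p|\ge \tfrac12=1/Q$ for every integer $p$; more generally $x=1/N$, $Q=N$ fails for every integer $N\ge 2$. The classical formulation relaxes one of the two inequalities (typically to $0<q\le Q$, which is exactly what your pigeonhole argument already delivers when $Q\in\Z$). So the defect is in the paper's statement rather than in your proof, and no endpoint patch is possible or needed once the inequality is corrected.
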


The Classical multidimensional version of approximation theorem, Due to Johann Peter Gustav Lejeune Dirichlet, states that

\begin{theorem}[Dirichlet's approximation theorem \cite{Di1842}, 1842]
For every real $m \times n$ matrix $A$ and every
$Q > 1$, there exists an integer vector
$ \p\in Z^m$ and $\q\in Z^n$ such that
$$\| A\q-\p \|<\frac{1}{Q^{\frac{n}{m}}} \text{ and } 0<\q<Q.$$
\end{theorem}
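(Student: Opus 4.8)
The plan is to deduce the statement from Minkowski's convex body theorem — itself a pigeonhole (Blichfeldt) argument, so the proof stays in the spirit of Legendre's theorem while also delivering the sharp constant. The map $\R^{m+n}\to\R^{m+n}$, $(\p,\q)\mapsto(A\q-\p,\q)$, is given by the block matrix $\begin{pmatrix}-I_m & A\\ 0 & I_n\end{pmatrix}$, whose determinant is $\pm1$; hence it carries $\Z^{m+n}$ onto a lattice $\La\subset\R^{m+n}$ of covolume $1$. Consider the box
\[
C=\bigl\{(\mathbf x,\mathbf y)\in\R^m\times\R^n:\ \|\mathbf x\|\le Q^{-n/m},\ \|\mathbf y\|\le Q\bigr\},
\]
a compact, convex, centrally symmetric set whose volume is $(2Q^{-n/m})^m(2Q)^n=2^{m+n}=2^{m+n}\operatorname{covol}(\La)$.

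By Minkowski's theorem (in the form allowing equality of the volumes, valid because $C$ is closed) $C$ contains a point of $\La$ other than the origin: there are $\p\in\Z^m$ and $\q\in\Z^n$, not both zero, with $\|A\q-\p\|\le Q^{-n/m}$ and $\|\q\|\le Q$. Here $\q\neq0$, since $\q=0$ would force the lattice point to be $(-\p,0)$ with $\|\p\|\le Q^{-n/m}<1$ (using $Q>1$), hence $\p=0$, contradicting non-triviality. This gives the conclusion with ``$\le$'' in the size bound on $\q$; to obtain the strict bound I would rerun the argument with $Q$ replaced by $Q-\e$ for small $\e>0$, producing $\q_\e\neq0$ with $\|\q_\e\|\le Q-\e<Q$ and $\|A\q_\e-\p_\e\|\le(Q-\e)^{-n/m}$. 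Only finitely many integer vectors have sup-norm less than $Q$, and for each of them only finitely many $\p$ obey $\|A\q-\p\|\le Q^{-n/m}+1$, so a single pair $(\p,\q)$ recurs along some sequence $\e\to0^{+}$; passing to the limit yields $\|\q\|<Q$ together with $\|A\q-\p\|\le Q^{-n/m}$.

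The delicate point is the boundary bookkeeping: the volume of $C$ exactly meets Minkowski's threshold, so one must invoke the closed-set version of the theorem, and correspondingly one cannot expect strict inequality in both estimates simultaneously (this already fails at $A=\binom{1/2}{1/4}$, $Q=4$, $m=2$, $n=1$), which is why I sharpen only the bound on $\|\q\|$. An alternative closer in spirit to Legendre's pigeonhole proof works when $Q\notin\Z$ and $N:=Q^{n/m}$ is an integer: subdivide $[0,1)^m$ into the $N^m=Q^n$ half-open cubes of side $1/N$, replace each entry of $A$ by its fractional part, and apply the pigeonhole principle to the $\lceil Q\rceil^n>Q^n$ points of $[0,1)^m$ obtained by reducing $A\q$ coordinatewise modulo $1$, for $\q\in\Z^n$ with $0\le q_j<Q$; two of these share a cube, and the difference $\q$ of the corresponding vectors is a nonzero integer vector with $\|\q\|<Q$ for which a suitable $\p\in\Z^m$ satisfies $\|A\q-\p\|<1/N=Q^{-n/m}$. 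I would nonetheless present the Minkowski argument, since the cube version loses the sharp constant for general real $Q$ — rounding $Q^{n/m}$ up to an integer and then raising to the $m$-th power is too lossy.
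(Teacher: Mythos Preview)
The paper does not actually prove this theorem: it is stated as a classical result and the paper merely remarks that ``the modern proof of Dirichlet's approximation theorem often uses \ldots\ the (first) convex body theorem, due to Hermann Minkowski, applied to an appropriately chosen convex set,'' with a footnote indicating the set in the case $m=n=1$. Your proposal carries out precisely this Minkowski argument in full generality, so it is entirely in line with what the paper suggests; your proof is correct, and your careful bookkeeping about which inequality can be made strict is a welcome clarification of a point the paper's statement leaves ambiguous.
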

The modern proof of Dirichlet's approximation theorem often uses a generalized version of ``Pigeonhole Principle" for lattices in Euclidean spaces, called the (first) convex body theorem, due to Hermann Minkowski, applied to an appropriately chosen convex set\footnote{For the special case when $m=n=1$, this convex set can be chosen as
$$S = \left\{ (x,y) \in \R^2 : -N-\frac{1}{2} \leq x \leq N+\frac{1}{2}, \vert \alpha x - y \vert \leq \frac{1}{N} \right\}.$$}:

\begin{theorem}[Minkowski \cite{MI1896}, 1889]
Suppose that $L$ is a unimodular lattice in $\R^d$ of covolume $1$ and $S$ is a convex subset of $\R^d$ that is symmetric with respect to the origin (namely $x \in S$ if and only if $-x\in S$). If the volume of $S$ is strictly greater than $2^n$, then $S$ must contain at least one lattice point other than the origin. 
\end{theorem}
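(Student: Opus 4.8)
The plan is to derive Minkowski's theorem from a pigeonhole principle for lattices, due to Blichfeldt: if $T \subseteq \R^d$ is Lebesgue measurable with $\mathrm{vol}(T) > \mathrm{covol}(L) = 1$, then $T$ contains two distinct points whose difference lies in $L$. Granting this, one applies it to the set $T = \tfrac{1}{2}S := \{\tfrac12 s : s \in S\}$: since dilation by $\tfrac12$ scales volume by $2^{-d}$, the hypothesis $\mathrm{vol}(S) > 2^d$ gives $\mathrm{vol}(T) > 1$, so there are $s_1, s_2 \in S$ with $\tfrac12 s_1 \neq \tfrac12 s_2$ and $\tfrac12 s_1 - \tfrac12 s_2 \in L$. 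Symmetry of $S$ gives $-s_2 \in S$, and convexity of $S$ gives $\tfrac12 s_1 + \tfrac12(-s_2) = \tfrac12(s_1 - s_2) \in S$; this point is a nonzero element of $L$, which is exactly what we want.

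It remains to prove Blichfeldt's lemma, and this is where I would spend the effort. Fix a basis of $L$ and let $F$ be the corresponding half-open fundamental parallelepiped, so that $\R^d$ is the disjoint union of the translates $F + v$ over $v \in L$, with $\mathrm{vol}(F) = 1$. Partition $T$ accordingly as $T = \bigsqcup_{v \in L}\big(T \cap (F+v)\big)$ and translate each block into $F$ by setting $T_v := \big(T \cap (F+v)\big) - v \subseteq F$. Then $\sum_{v \in L} \mathrm{vol}(T_v) = \mathrm{vol}(T) > 1 = \mathrm{vol}(F)$, so by countable additivity the measurable sets $T_v \subseteq F$ cannot be pairwise disjoint; hence there exist distinct $v, w \in L$ and a point $z \in T_v \cap T_w$. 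Then $z + v$ and $z + w$ are distinct points of $T$ whose difference $v - w$ lies in $L \setminus \{0\}$, as required.

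The only genuinely delicate point is that this ``overflow'' argument must be run for a countable family rather than a finite one, which is exactly where countable additivity of Lebesgue measure enters; if $S$ (hence $T$) were bounded, only finitely many $T_v$ are nonempty and the step is entirely elementary, and the general case reduces to bounded sets by writing $T$ as an increasing union of bounded pieces and passing to the limit. One should also take care to choose the fundamental domain $F$ so that the pieces $T \cap (F + v)$ are measurable, which the half-open parallelepiped arranges. Everything else --- the dilation, the use of symmetry, the use of convexity --- is immediate, and notably the strict inequality in the hypothesis is what allows us to avoid any compactness assumption on $S$.
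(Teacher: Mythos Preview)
Your proof is correct and is the standard argument via Blichfeldt's lemma. Note, however, that the paper does not actually prove this theorem: it is merely stated as a classical result (with a citation to Minkowski) in the introductory section, with no proof given. There is therefore nothing in the paper to compare your argument against; what you have written is precisely the textbook derivation, and it stands on its own. (Incidentally, the statement as printed has $2^n$ where $2^d$ is meant; you silently corrected this in your write-up.)
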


\subsection{Matrix diophantine approximation and Dani's correspondence principle}
We begin by introducing two classes of matrices that generalizes the Diophantine approximation of real numbers.

\begin{definition}
Let $M$ denote the set of all $m \times n$ matrices with real entries. A matrix $A \in M$ is called \textit{singular} if for all $\epsilon > 0$, there exists $Q_\epsilon$ such that for all $Q \ge Q_{\epsilon}$, there exist integer vectors $\p \in \mathbb Z^m$ and $\q \in Z^n$ such that
\begin{equation} \label{eq:1.1}
    \|A\q+\p\|\le \e Q^{-n/m} ~\text{and}~
0<\| \q \|< Q
\end{equation}

Here $\|\cdot\|$ denotes an arbitrary norm on $\R^m$ or $\R^n$. We denote the set of singular $m \times n$ matrices by
$\textbf{Sing}(m, n)$.

An $m \times n$ matrix $A$ is called \textit{badly approximable} if there exists $c > 0$ such that for all integer vectors $\p \in \Z^m$ and $\q \in \Z^n-\{0\}$ we have 
$$ \|A \q + \p \| \ge c \| \q \|^{-\frac{n}{m}}. $$

We denote the collection of badly approximable $m \times n$ matrices by $\textbf{BA}(m,n)$.
\end{definition}

\begin{example}
When $m=n=1$, being singular is the same as being rational. If $A$ is a rational number, then it trivially satisfies \eqref{eq:1.1}. Conversely, by Hurwitz's approximation theorem \cite{BE02}, given any irrational number $\alpha$, there exist infinitely many rational numbers $p/q$ with $(p,q)=1$ such that 
$$|\alpha-\frac{p}{q}|<\frac{1}{\sqrt{5}q^2},$$

and that $\sqrt{5}$ is optimal. So if $\e <\frac{1}{\sqrt{5}}$, then there exist integers $p,q$ with $(p,q)=1$ such that $|\alpha-\frac{p}{q}|>\frac{\e}{\sqrt{5}}$ which fails \eqref{eq:1.1}.

By Liouville’s theorem \cite{EW11}, any quadratic irrational (solutions to quadratic equations over $\mathbb Z$) is badly approximable, but it is unknown whether all algebraic numbers are badly approximable.
\end{example}

\vspace{5mm}
The sets of badly approximable and singular matrices are linked to homogeneous dynamics via the Dani
correspondence principle.\label{gt} For each $t \in \R$ and for each matrix $A$, let $g_t:=\begin{bmatrix}
e^{t/m}I_m & 0 \\
0 & e^{-t/n}I_n 
\end{bmatrix}$ 
and 
$u_A:=\begin{bmatrix}
I_m & A \\
0 & I_n 
\end{bmatrix}$, where $I_k$ denotes the $k$-dimensional identity matrix.

By the Dani's correspondence principle (\cite{Dani1985DivergentTO}), the Diophantine properties of $A$ and the dynamical properties of the orbit $(g_t u_A x_0)_{t\ge 0}$ (which consists of unimodular lattices) can be summarized in the following table. Write $x_0=\Z^{m+n}$ ,an element in the space of unimodular lattices in $\R^{m+n}$, also identified with the neutral element in $\SL(m+n,\R)/\SL(m+n,\Z)$:

\begin{table}
\begin{center}
\setlength\tabcolsep{20pt} 

\begin{tabular}{|c|c|}
\hline
Diophantine properties of $A$ & Dynamical properties of $(g_t u_A x_0)_{t\ge 0}$       \\ \hline
$A$ is badly approximable     & $(g_t u_A x_0)_{t\ge 0}$ is bounded                    \\ \hline
$A$ is singular               & $(g_t u_A x_0)_{t\ge 0}$ is divergent                  \\ \hline
\end{tabular}
\caption{\label{example-table}Dani's correspondence.}
\end{center}
\end{table}

\vspace{3mm}

\begin{theorem}
Let $G$ be a Lie group and $\Gamma$ be a unimodular Lattice in $G$. Consider the $G$-homgeneous space $X:=G/\Gamma$, equipped with the $G$-invariant Borel probability measure $\mu_X$. Then the $G$-action on $X$ is ergodic.
\end{theorem}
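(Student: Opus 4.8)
The plan is to reduce the assertion to a short Fubini argument built on the transitivity of the $G$-action on $X = G/\Gamma$. A preliminary remark on conventions: I read ``ergodic'' in the measure-theoretic sense, namely that every Borel set $E \subseteq X$ with $\mu_X(gE \triangle E) = 0$ for all $g \in G$ satisfies $\mu_X(E) \in \{0,1\}$. (If one instead asks only about strictly invariant sets, $gE = E$ for all $g$, there is nothing to prove: a transitive action admits no non-trivial strictly invariant subset whatsoever, so all the content sits in the ``mod null'' version.)

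Starting from such a set $E$, I would put $f = \mathbbm 1_E$ (with a Borel representative chosen) and fix a Haar measure $m_G$ on $G$. The single external fact I will invoke is the standard construction of $\mu_X$: because $\Gamma$ is a lattice, $G$ is unimodular and the invariant probability measure $\mu_X$ is, up to normalization, the pushforward of $m_G$ along the quotient map $p \colon G \to X$, $p(g) = g\Gamma$; integrating over a Borel fundamental domain for the right $\Gamma$-action and using right-invariance of $m_G$ then yields, for Borel $A \subseteq X$, the equivalence $\mu_X(A) = 0 \iff m_G(p^{-1}(A)) = 0$. Granting this, the argument runs as follows. The action map $(g,x) \mapsto gx$ is continuous, so $(g,x) \mapsto f(gx)$ is Borel on $G \times X$; for each fixed $g$, invariance of $E$ gives $f(gx) = f(x)$ for $\mu_X$-a.e.\ $x$; and since a Lie group is $\sigma$-compact, $m_G$ is $\sigma$-finite, so Fubini for $m_G \times \mu_X$ applies to the null set $\{(g,x) : f(gx) \neq f(x)\}$ and transposes the quantifiers: for $\mu_X$-a.e.\ $x$ one has $f(gx) = f(x)$ for $m_G$-a.e.\ $g$.

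I would then fix one such good point $x_0 = h_0\Gamma$ and set $c = f(x_0) \in \{0,1\}$, so that $f(gx_0) = c$ for $m_G$-a.e.\ $g$. Since $gx_0 = p(gh_0)$, the sets $\{g : gx_0 \in E\}$ and $\{g : gx_0 \notin E\}$ are the right translates $p^{-1}(E)\,h_0^{-1}$ and $p^{-1}(X\setminus E)\,h_0^{-1}$, and right translation preserves $m_G$-null sets; hence if $c = 1$ then $p^{-1}(X \setminus E)$ is $m_G$-null, so $\mu_X(X \setminus E) = 0$, while if $c = 0$ then $p^{-1}(E)$ is $m_G$-null, so $\mu_X(E) = 0$. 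In either case $\mu_X(E) \in \{0,1\}$, which is exactly ergodicity.

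The step I expect to need genuine care — and which I would write out in full rather than sketch — is the measure-theoretic plumbing: joint Borel measurability of $(g,x) \mapsto f(gx)$, the $\sigma$-finiteness that legitimizes Fubini, and, most importantly, the identification of $\mu_X$ with the pushforward of Haar measure, so that $p$ both preserves and reflects null sets. That identification is where the hypotheses genuinely enter: ``$\Gamma$ is a lattice'' guarantees that a finite invariant measure exists at all, and unimodularity of $G$ is what makes the fundamental-domain computation go through. Nothing else about the Lie-group structure is used beyond $\sigma$-compactness, so the same proof applies verbatim to any lattice in any $\sigma$-compact locally compact group.
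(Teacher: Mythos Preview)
Your proposal is correct and follows essentially the same approach as the paper: apply Fubini to flip the quantifiers from ``for each $g$, a.e.\ $x$'' to ``for a.e.\ $x$, a.e.\ $g$'', then use the relationship between Haar measure on $G$ and $\mu_X$ (you package it as the null-set correspondence $\mu_X(A)=0 \iff m_G(p^{-1}(A))=0$, the paper unwinds the quotient integral formula over $\Gamma$ directly) to conclude that the a.e.\ constancy along one orbit forces a.e.\ constancy on $X$. Your write-up is in fact more careful than the paper's about the measurability and $\sigma$-finiteness prerequisites for Fubini.
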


\begin{proof}
Let $\mu_G$ be a left Haar measure on $G$. Let $f\in L^2(X,\mu_X)$ be such that $f$ is invariant under $G$-action, namely for any $g\in G$
$$f(g.x)=f(x), \text{for}~\mu_X \text{-almost every}~ x.$$

We will show that $f$ is a constant almost everywhere with respect to $\mu_X$.

Consider the product space $G\times X$. We will apply Fubini-Tonelli Theorem to the function $|f(g.x)-f(x)|$ on $G\times X$:
$$0=\int_G \int_X |f(g.x)-f(x)| d\mu_X d\mu_G = \int_X \int_G |f(g.x)-f(x)| d\mu_G d\mu_X.$$

Hence, $0=\int_G |f(g.x)-f(x)| d\mu_G(g)$ for $\mu_X$-almost every $x\in X$. Therefore for almost all $x\in X$ (and in particular there exists $x\in X$), there exists $U\subset G$ with $\mu_G(U)=1$ such that for every $g\in U$,
$$f(g.x) = f(x).$$

\vspace{3mm}
That $f$ is $\mu_X$-almost everywhere constant follows from the claim below.
\vspace{3mm}

Claim: $\mu_X(U.x)=1$ for any $x\in X$. 

To show this claim, we use the quotient integral formula (See for example Theorem 1.5.3 in \cite{DE14} or Theorem 2.51 in \cite{Fo15}):

Take $h=1_{(G-U)g_0}$, noticing that $\mu((G-U)g_0)=0$ because of the unimodularity. Now let $\mu_\Gamma$ be a Haar measure on $\Gamma$, by the the quotient integral formula and Fubini's theorem we have:
$$0=\int_G h d\mu_G = \int_{G/\Gamma} \int_{\Gamma} h(g\gamma) d\mu_\Gamma ~d\mu_X =  \int_{\Gamma}\int_{G/\Gamma} h(g\gamma)d\mu_X~ d\mu_\Gamma.$$

Since $\Gamma$ is countable, the Haar measure on it must be a scalar multiple of counting measure. So we must have for every $\gamma \in \Gamma$, in particular for $\gamma = e$,
$$0=\int_{G/\Gamma} h(g)d\mu_X= \mu_X((G-U)g_0 \Gamma).$$

Since $g_0\in G$ is arbitrary, and by the transitivity of the action, $G/\Gamma - Ux$ is contained in $(G-U).x$, we are done.

\end{proof}

\begin{theorem}[See Chapter III Corollary 2.2 of \cite{BM00}]\label{howe}
If a simple Lie group with finite center acts ergodically on a probability space $X$, then every subgroup of $G$ with a non-compact closure is strongly mixing, and thus ergodic on $X$.
\end{theorem}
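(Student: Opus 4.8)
The plan is to recognize the statement as a packaging of the Howe--Moore vanishing theorem: first I reduce both conclusions (strong mixing, and hence ergodicity, for subgroups with non-compact closure) to a single statement about decay of matrix coefficients of unitary representations, and then I prove that statement by the Mautner phenomenon, using crucially that $G$ is simple.

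For the reduction, let $\Ha = L^2_0(X,\mu_X)$ be the orthogonal complement of the constant functions, equipped with the Koopman representation $\pi$ of $G$. Ergodicity of the $G$-action means exactly that $\Ha$ has no nonzero $G$-invariant vector. If $H \le G$ has non-compact closure, then any sequence leaving every compact subset of $H$ also leaves every compact subset of $G$; hence, once we know that every matrix coefficient $g \mapsto \langle \pi(g)v, w\rangle$ lies in $C_0(G)$, we obtain $\langle \pi(h)f, f'\rangle \to 0$ as $h \to \infty$ in $H$ for all $f, f' \in \Ha$, which is precisely strong mixing of the $H$-action. Strong mixing forces ergodicity: if $1_B$ is $H$-invariant, then $\mu_X(B) = \lim_n \mu_X(B \cap h_n B) = \mu_X(B)^2$ along any $h_n \to \infty$ in $H$, so $\mu_X(B) \in \{0,1\}$. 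Thus everything follows from the assertion: \emph{any unitary representation $\pi$ of $G$ with no nonzero invariant vector has matrix coefficients in $C_0(G)$.}

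I would prove the vanishing statement by contradiction. Suppose $\langle \pi(g_n)v, w\rangle \to c \ne 0$ with $g_n \to \infty$. Using the Cartan decomposition $G = K \overline{A^+} K$ ($K$ maximal compact, $\overline{A^+}$ a closed positive Weyl chamber in a maximal $\R$-split torus $A$), write $g_n = k_n a_n l_n$ and pass to a subsequence with $k_n \to k$, $l_n \to l$ in $K$; replacing $v, w$ by $\pi(l)v, \pi(k^{-1})w$ reduces us to $\langle \pi(a_n)v, w\rangle \to c \ne 0$ with $a_n = \exp(H_n)$, $H_n \in \overline{\mathfrak a^+}$, $\|H_n\| \to \infty$. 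Pass to a further subsequence so that $\pi(a_n)v \rightharpoonup v'$ weakly; then $\langle v', w\rangle = c$, so $v' \ne 0$. Refine once more so that each restricted root $\alpha$ has $\alpha(H_n)$ either tending to $+\infty$ or bounded; the set $\Theta$ of simple roots $\alpha$ with $\alpha(H_n)$ bounded is proper (since $\|H_n\| \to \infty$), and the positive roots not in $\mathrm{span}\,\Theta$ are precisely those $\alpha$ with $\alpha(H_n) \to +\infty$, so $U^{++} := \prod_{\alpha(H_n)\to+\infty} \exp \mathfrak g_\alpha$ is the unipotent radical of the proper parabolic $Q = P_\Theta = L U^{++}$. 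For $u \in U^{++}$ we have $a_n^{-1} u a_n \to e$, hence $\|\pi(u)\pi(a_n)v - \pi(a_n)v\| = \|\pi(a_n^{-1} u a_n)v - v\| \to 0$; taking weak limits gives $\pi(u)v' = v'$. Thus $v'$ is a nonzero vector fixed by the unipotent subgroup $U^{++}$.

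The final and hardest step is to upgrade $U^{++}$-invariance of $v'$ to $G$-invariance, contradicting the hypothesis; this is where simplicity enters. The closed subspace $\Ha^{U^{++}}$ of $U^{++}$-fixed vectors contains $v'$, is invariant under $Q = L U^{++}$ (as $L$ normalizes $U^{++}$), and carries a unitary representation of the reductive group $L$. Taking $H_0$ in the interior of the facet cutting out $Q$ and $b_s := \exp(sH_0)$, a one-parameter subgroup in the centre of $L$, one checks $b_s^{-1} u^- b_s \to e$ as $s \to -\infty$ for every $u^-$ in the opposite unipotent $U^{--}$. If $\Ha^{U^{++}}$ contains a nonzero $\{b_s\}$-fixed vector $w$, the same Mautner argument makes $w$ fixed by $U^{--}$, hence by $\langle U^{++}, U^{--}\rangle$; that group is connected, nontrivial, normalised by $L$ and thus by $\langle Q, Q^-\rangle = G$, so it equals $G$ by simplicity, a contradiction. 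Otherwise $\{b_s\}$ has no nonzero fixed vector in $\Ha^{U^{++}}$, and I would feed the $L$-representation on $\Ha^{U^{++}}$ into an induction on $\dim G$ applied to the semisimple part of $L$ (of strictly smaller real rank), combined with the spectral theorem for the central torus of $L$, again producing invariance under a strictly larger unipotent subgroup and iterating. Making this dichotomy terminate and checking that the unipotent subgroups it manufactures really generate $G$ is the main obstacle; everything else is the soft combination of weak compactness, the contraction of $\mathrm{Ad}(a_n)$ on horospherical subgroups, and the fact that opposite horospherical subgroups generate a simple group.
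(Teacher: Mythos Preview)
The paper does not prove this theorem at all: it is stated with the citation ``See Chapter III Corollary 2.2 of \cite{BM00}'' and used as a black box, so there is no proof in the paper to compare against. Your proposal is the standard Howe--Moore argument (reduction to vanishing of matrix coefficients on $L^2_0$, Cartan decomposition plus weak compactness to produce a nonzero $U^{++}$-fixed vector via the Mautner contraction, then propagation of invariance using simplicity), which is indeed how the cited reference proceeds.

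Your sketch is essentially sound up to the point where you obtain a nonzero $U^{++}$-invariant vector. The one place that is genuinely incomplete, and that you yourself flag, is the final upgrade from $U^{++}$-invariance to $G$-invariance. The dichotomy you set up (either the central one-parameter subgroup $\{b_s\}$ of $L$ fixes a vector, whence the opposite-horospherical argument finishes; or it does not, and you induct on the semisimple part of $L$) is the right shape, but the inductive branch as stated is not quite a proof: the semisimple part of $L$ need not be simple, so you cannot literally feed it back into the theorem's hypothesis, and you have not said why the enlarged unipotent groups produced at each stage eventually generate $G$. In the standard treatments this is handled either by proving Howe--Moore for semisimple groups with finite center (product of simple factors) and inducting on real rank, or by invoking Mautner's lemma in the sharper form that any vector fixed by a nontrivial $\mathrm{Ad}$-diagonalizable element is fixed by all of $G$ when $G$ is simple. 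Either route closes the gap, but as written your final paragraph is a plan rather than an argument.
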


It follows from the ergodicity of $(g_t)$-action on $\SL(m+n,\R)/\SL(m+n,\Z)$ and the equidistribution of orbits under ergodic actions that $\BA(m,n), \Sing(m,n)$ and $\VSing(m,n)$ all have Lebesgue measure zero.

\vspace{5mm}
To further investigate the sets with various Diophantine properties, we will look into their fractional dimensions. In order to compute the Hausdorff dimension of badly approximable numbers and matrices, Schmidt invented the following topological game \cite{Schmidt65onbadly}  \cite{Sch69} called \text{Schmidt Games}:

Choose two parameters $0<\alpha<1$ and $0<\beta<1$. Two players, called Alice and Bob, will play the following game:

\begin{itemize}
  \item First Bob choose a closed ball $B_1$ in $\R^d$;
  \item Then Alice choose closed ball $A_1 \subset B_1$ in $\R^d$ whose radius is $\alpha$ times the radius of $B_1$
  \item Next Bob chooses a closed ball $B_2 \subset A_1$ whose radius is $\beta$ times the radius of $A_1$
  \item Then Alice chooses a closed ball $A_2 \subset B_2$ whose radius is $\alpha$ times the radius of $B_2$
  \item $\cdots \cdots$
\end{itemize}
We call a sequence of choices by Alice (resp. Bob) depending on the choices of Bob (resp. Alice) a \textit{strategy}. A set $S\subset \R^d$ is called \textit{(Alice)-winning} if Alice has a strategy to make sure (no matter how Bob chooses his strategy), we have
$$\cap_{k=1}^{\infty}A_k \subset S.$$

Schmidt proved the following 

\begin{theorem}[Theorem 2, \cite{Sch69}]
The set of badly approximable matrices in $\R^{m\times n}$ is a winning set.
\end{theorem}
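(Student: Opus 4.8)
The plan is to realise $\BA(m,n)$ as the set of points avoiding a countable family of shrinking neighbourhoods of rational affine subspaces, and then to produce a strategy by which Alice keeps the nested balls away from that family. Identify $\R^{m\times n}$ with $\R^{mn}$. For $\q\in\Z^n\setminus\{0\}$ the map $A\mapsto A\q$ is linear, surjective onto $\R^m$, and scales distances to its kernel by a factor comparable to $\|\q\|$; hence $H_{\p,\q}:=\{A:A\q=-\p\}$ is a rational affine subspace of $\R^{mn}$ of codimension $m$, and $\|A\q+\p\|<c\|\q\|^{-n/m}$ forces $\operatorname{dist}(A,H_{\p,\q})<\delta_\q$ with $\delta_\q\asymp c\|\q\|^{-n/m-1}$. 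Writing $\Delta_{\p,\q}(c):=\{A:\|A\q+\p\|<c\|\q\|^{-n/m}\}$, a matrix is badly approximable as soon as it avoids $\bigcup_{\p,\q}\Delta_{\p,\q}(c)$ for some $c>0$. So it suffices to show that for all $\alpha<\alpha_0(m,n)$ and all $\beta$, Alice has a strategy making the point $x^\ast:=\bigcap_kA_k$ lie outside every $\Delta_{\p,\q}(c)$, where the constant $c$ is fixed at the end in terms of $\alpha$, $\beta$ and the radius $\rho$ of Bob's first ball; she is thus playing a ``flat-avoidance'' game against the countable family $\{H_{\p,\q}\}$.

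The elementary move is a one-flat dodge: if $\alpha<1/2$ and Bob has played $B=B(y,r)$, then translating the centre a distance $(1-\alpha)r$ along a unit normal to a given affine subspace $H$ (of positive codimension), in the direction away from $H$, yields $A=B(y',\alpha r)\subseteq B$ every point of which is at distance $\ge(1-2\alpha)r$ from $H$; since all later balls lie inside $A$, they all stay $\ge(1-2\alpha)r$ from $H$ forever. Now organise by denominator scale. With $r_i=(\alpha\beta)^{i-1}\rho$ and a suitable $\lambda=\lambda(\alpha,\beta)$, the windows $\big[\lambda(\alpha\beta)r_i,\ \lambda r_i\big)$ tile $(0,\lambda\rho]$, so each $\q\neq0$ has a unique ``due round'' $i(\q)$, with $i(\q)\ge1$ once $c$ is small enough that $\delta_\q\le\lambda\rho$ always. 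At round $i$ Alice lists the pairs $(\p,\q)$ with $i(\q)=i$ and $H_{\p,\q}\cap B_i\neq\emptyset$ and performs the one-flat dodge against each of them over the next few rounds; choosing $\lambda$ so that the residual ``$(1-2\alpha)\cdot$radius'' still in force when she finishes exceeds $\delta_\q$, the point $x^\ast$ ends up at distance $\ge\delta_\q$ from every $H_{\p,\q}$, hence outside every $\Delta_{\p,\q}(c)$, so $x^\ast\in\BA(m,n)$ and $\BA(m,n)$ is winning.

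The main obstacle is that this list must be short: one needs, for any ball $B$ of radius $r$, a bound $N(\alpha,m,n)$ on the number of $(\p,\q)$ with $\|\q\|$ in the window attached to $r$ and $H_{\p,\q}\cap B\neq\emptyset$, so that Alice can clear each batch of threats before the next scale arrives (overlapping to-do lists being then absorbed by a routine interleaving). This is the codimension-$m$ analogue of the one-dimensional fact that an interval of length $<1/(2Q^2)$ meets at most one rational of denominator $\le Q$, and of the simplex lemma; its proof is where the geometry of numbers — Minkowski's convex body theorem, quoted above — really enters: if too many such flats crowded into a ball small relative to their denominator they would either produce a nonzero vector of $\Z^{m+n}$ shorter than Minkowski allows, or be forced into a common rational flat of smaller codimension, whose denominators the window excludes. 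The remaining points are routine: the norm comparisons of the first step, the bookkeeping just mentioned, and fixing the constants in the correct order ($\alpha_0$, then $\lambda$ and $N$, then $c$ in terms of $\alpha,\beta,\rho$). Alternatively one can route everything through the Dani correspondence of Table~\ref{example-table}: $\BA(m,n)$ is precisely the set of $A$ whose orbit $(g_tu_Ax_0)_{t\ge0}$ remains in a fixed compact subset of $\SL(m+n,\R)/\SL(m+n,\Z)$, and one plays Schmidt's game directly in that homogeneous space to keep the orbit out of shrinking cusp neighbourhoods — with the same ``boundedly many active directions per unit of time'' statement as the crux, now read off from Mahler's compactness criterion.
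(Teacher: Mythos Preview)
The paper does not prove this theorem; it merely quotes it from Schmidt's 1969 paper as background in the introduction, so there is no ``paper's own proof'' to compare against.

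That said, your outline is a faithful sketch of Schmidt's original strategy: recast $\BA(m,n)$ as the set of matrices avoiding thin neighbourhoods of the rational affine flats $H_{\p,\q}=\{A:A\q=-\p\}$, have Alice use a one-step normal translation to push the nested balls away from a given flat, and organise the flats into batches by the scale $\delta_\q\asymp c\|\q\|^{-n/m-1}$ so that each batch is handled at a round where the current radius is comparable. You also correctly identify the crux --- the uniform bound $N(\alpha,m,n)$ on the number of flats that can meet a ball of the matching radius --- and correctly point to geometry of numbers (a simplex-lemma type argument) as its source.

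What your proposal does not do is actually prove that counting bound; the sentence ``if too many such flats crowded into a ball\dots they would either produce a nonzero vector shorter than Minkowski allows, or be forced into a common rational flat of smaller codimension'' is an accurate description of the \emph{shape} of the argument but not the argument itself. In Schmidt's paper this is the main technical lemma and takes real work: one has to pass from several flats $H_{\p_i,\q_i}$ meeting a small ball to a short nonzero integer relation among the $(\p_i,\q_i)$, and the parameters have to be tuned so that the resulting integer vector is genuinely too short. Everything else in your write-up (the dodge move, the interleaving bookkeeping, the final choice of $c$) is routine once that lemma is in hand. Your alternative route through Dani's correspondence and Mahler's criterion is also viable but is a later reformulation, not Schmidt's 1969 argument.
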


\begin{theorem}[Corollary 2 to Theorem 6, \cite{Schmidt65onbadly}]
Any winning set in an Euclidean space is of full Hausdorff dimension.
\end{theorem}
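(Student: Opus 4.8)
The plan is to produce, inside any winning set $S \subseteq \R^d$, a Cantor-type compact subset $K$ whose Hausdorff dimension can be pushed arbitrarily close to $d$; since trivially $\dim_H S \le d$, this will force $\dim_H S = d$.

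\textbf{Setting up the game tree.} Recall that $S$ being winning means there is $\alpha \in (0,1)$ with $S$ being winning in the $(\alpha,\beta)$-game for every $\beta \in (0,1)$. Fix such an $\alpha$, fix $\beta \in (0,1)$ (to be sent to $0$ at the end), and fix a winning strategy for Alice in the $(\alpha,\beta)$-game. I would build a tree of plays: start from an arbitrary ball $B_1$ and let $A_1$ be Alice's response, of radius $\rho_1$. Given any ball $A_k$ in the tree, of radius $r$, choose inside it a family $B_{k+1}^{(1)},\dots,B_{k+1}^{(N)}$ of balls of radius $\beta r$ — each a legal move for Bob — whose centres form a grid of spacing $3\beta r$; these are pairwise disjoint, one can fit $N \ge (6\beta)^{-d}$ of them (for $\beta \le 1/2$), and any two of them have a gap $\ge \beta r$. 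Let Alice's strategy respond to each, producing $A_{k+1}^{(i)} \subset B_{k+1}^{(i)}$ of radius $\alpha\beta r$. Iterating, the level-$k$ balls of the resulting tree are pairwise disjoint, each nested in its parent, with radii $\rho_k = \rho_1 (\alpha\beta)^{k-1} \to 0$; hence $K := \bigcap_{k\ge 1} \bigcup_{\text{level }k} A_k^{(\cdot)}$ is a nonempty compact set, and every point of $K$ is the common intersection point of one honest play in which Alice followed her winning strategy. Therefore $K \subseteq S$.

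\textbf{Lower bound on $\dim_H K$.} Here I would invoke the mass distribution principle. Equip $K$ with its natural probability measure $\mu$ assigning mass $N^{-(k-1)}$ to each level-$k$ ball. The grid construction guarantees that distinct level-$k$ balls are separated by at least $c_\alpha \rho_k$ for a constant $c_\alpha > 0$ depending only on $\alpha$; this separation is exactly what the dimension estimate consumes. A short packing argument then shows that, for $x \in K$ and small $\rho$, choosing $k$ with $\rho_{k+1} \le \rho < \rho_k$, the ball $B(x,\rho)$ meets at most $C_\alpha (\rho/\rho_{k+1})^d$ of the level-$(k+1)$ balls, so $\mu(B(x,\rho)) \le C_\alpha (\rho/\rho_{k+1})^d N^{-k}$. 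Writing $s_\beta := \frac{\log N}{\log(1/(\alpha\beta))}$, one has $N^{-k} = (\rho_{k+1}/\rho_1)^{s_\beta}$, and combining this with $\rho \ge \rho_{k+1}$ and $\rho < \rho_{k+1}/(\alpha\beta)$ gives $\mu(B(x,\rho)) \le C'_{\alpha,\beta}\,\rho^{s_\beta}$. Hence $\dim_H S \ge \dim_H K \ge s_\beta$.

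\textbf{Letting $\beta \to 0$.} Since $N \ge (6\beta)^{-d}$, we get $s_\beta \ge \dfrac{d\,(\log(1/\beta) - \log 6)}{\log(1/\beta) + \log(1/\alpha)} \longrightarrow d$ as $\beta \to 0^+$, so $\dim_H S = d$. I expect the only genuine work to be in the second step: arranging the sub-balls so that the natural Cantor measure satisfies a Frostman-type bound with exponent tending to $d$ — which is why the sub-balls must be spread on a grid rather than merely chosen disjoint — together with the bookkeeping of the constants. The construction of $K \subseteq S$ from the winning strategy and the limiting argument in $\beta$ are soft.
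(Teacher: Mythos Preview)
The paper does not supply a proof of this theorem; it is merely quoted as Corollary~2 to Theorem~6 of Schmidt's 1966 paper, so there is nothing in the paper to compare against. That said, your argument is correct and is precisely Schmidt's original strategy: exploit that an $\alpha$-winning set is $(\alpha,\beta)$-winning for \emph{every} $\beta$, let Bob branch into $N \asymp \beta^{-d}$ well-separated sub-balls at each stage, collect the resulting Cantor set $K\subset S$, and run the mass distribution principle to get $\dim_H K \ge \log N/\log(1/\alpha\beta)\to d$ as $\beta\to 0$.

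Two minor remarks. First, the specific count $N\ge (6\beta)^{-d}$ is cleanest if you obtain the centres by a maximal $3\beta r$-separated set in the ball of admissible centres rather than by a literal cubical grid (a grid introduces an extraneous $\sqrt{d}$ factor); the covering argument then gives exactly $N\ge ((1-\beta)/(3\beta))^d\ge (6\beta)^{-d}$ for $\beta\le 1/2$, and the separation property you need (gap $\ge \beta r$) is identical. Second, in the Frostman estimate you silently use that level-$(k+1)$ balls with \emph{different} parents are also separated by at least $\rho_{k+1}/\alpha$; this follows by an easy induction from the same-parent case, but is worth stating since the packing bound on $\#\{A_{k+1}^{(\cdot)}:A_{k+1}^{(\cdot)}\cap B(x,\rho)\ne\varnothing\}$ needs uniform separation across the whole level. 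With those two points made explicit, the write-up is complete.
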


By introducing a modified version of Schmidt's game, Kleinbock and Weiss \cite{KLEINBOCK20101276} proved that the set of weighted badly approximable matrices is also winning and thus of full Hausdorff dimension. Specifically,

\begin{theorem}
For $r_i,s_j, 1\le i\le m, 1\le j \le n$ with $\sum_{i=1}^m r_i=1=\sum_{j=1}^n s_j$, let $\textbf{r}=(r_1,\dots,r_m)$ and $\textbf{s}=(s_1,\dots,s_n)$. Then the set of badly approximable matrices with weight $(\textbf{r},\textbf{s})$, denoted $$\BA^{\textbf{r},\textbf{s}}(m,n):=\{A\in \R^{m\times n}: \inf_{\p\in \Z^m, \q \in \Z^n-\{0\}} \|Aq-p\|\textbf{r}\cdot \|q\|_{\textbf{s}}>0 \},$$ 
where the notation $\|x\|_{\textbf{r}}:=\max\{|x_1|^{\frac{1}{r_1}},\cdots,|x_m|^{\frac{1}{r_m}}\}$ for any $x\in \R^m$,
is a winning set of modified Schmidt game and thus of full Hausdorff dimension $mn$.
\end{theorem}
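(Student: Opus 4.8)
The plan is to combine the weighted Dani correspondence principle with the modified Schmidt game machinery of Kleinbock and Weiss. First I would record the weighted analogue of Dani's correspondence (Table~\ref{example-table}): for the diagonal flow $g_t=\diag(e^{r_1 t},\dots,e^{r_m t},e^{-s_1 t},\dots,e^{-s_n t})$ acting on $X=\SL(m+n,\R)/\SL(m+n,\Z)$ and for $u_A$ as above, one has $A\in\BA^{\mathbf{r},\mathbf{s}}(m,n)$ if and only if the forward trajectory $(g_t u_A x_0)_{t\ge 0}$ is bounded in $X$. This is the usual argument via the compactness criterion for the space of lattices (a family of unimodular lattices is precompact if and only if the nonzero lattice vectors have a uniform lower bound on their lengths), with the weighted quasinorms $\|\cdot\|_{\mathbf{r}}$ and $\|\cdot\|_{\mathbf{s}}$ chosen to match exactly the anisotropic contraction and expansion rates of $g_t$. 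Consequently it suffices to produce a compact set $K\subset X$ for which $E(K):=\{A\in\R^{m\times n}: g_t u_A x_0\in K\text{ for all }t\ge 0\}$ is winning for the modified game, because $E(K)\subset\BA^{\mathbf{r},\mathbf{s}}(m,n)$ and sets winning for the modified game have full Hausdorff dimension (as shown by Kleinbock and Weiss, adapting Schmidt's argument quoted above), here $mn$.

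Next I would analyze which matrices $A$ are ``dangerous'' over a bounded window of times. Partition $[0,\infty)$ into consecutive blocks on which $g_t$ has uniformly bounded distortion. The orbit leaves a fixed compact set during a block exactly when some primitive $v=(\mathbf{p},\mathbf{q})\in\Z^{m+n}$ has $\|g_t u_A v\|$ very small for some $t$ in that block; since $g_t$ expands the first $m$ coordinates, this forces $\mathbf{q}\neq 0$ and $A\mathbf{q}$ very close to the integer point $-\mathbf{p}$. For fixed $\mathbf{q}$ the map $A\mapsto A\mathbf{q}$ is linear and surjective onto $\R^m$, so the set of dangerous $A$ coming from $v$ lies in a thin neighborhood of an affine subspace of $\R^{m\times n}$ of codimension $m$, whose thickness is governed by the block. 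A geometry-of-numbers count, in the spirit of Minkowski's theorem quoted above, then shows that relative to any ball of the appropriate size at the scale of that block only boundedly many such $v$ can be relevant. Hence at each stage of the game Alice faces only a bounded number of thin neighborhoods of proper affine subspaces to avoid.

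I would then run Alice's strategy in the modified Schmidt game: at move $k$, using the extra freedom the modified game grants in the contraction rates, she shrinks Bob's ball just enough to step off every dangerous neighborhood associated to the time-block matched to the current scale, while keeping the orbit of every matrix in her ball inside $K$ through that block. This rests on an affine-subspace avoidance lemma: a ball can be replaced by a comparable sub-ball missing a bounded-degree family of thin tubes around affine subspaces, at a cost in radius that $\alpha$ and $\beta$ absorb. Iterating over all scales yields a point of $\bigcap_k A_k$ whose entire forward orbit stays in $K$, hence an element of $E(K)\subset\BA^{\mathbf{r},\mathbf{s}}(m,n)$; this proves $E(K)$ is winning and completes the argument.

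The main obstacle is the anisotropy built into the weights. Under $g_t u_A$ a round ball in matrix space is sheared and stretched into a highly eccentric ellipsoid, so the naive ``round ball in, round ball out'' dictionary that powers Schmidt's original game fails, and a single contraction ratio $\beta$ cannot keep pace with the different rates $e^{-s_j t}$. Overcoming this is exactly why one must (i) choose the time-blocks so that the distortion over each block is uniformly controlled, (ii) use the affine-subspace avoidance lemma in the more flexible modified game rather than the classical one, and (iii) prove the counting lemma bounding the number of obstructing integer vectors per block. Once these three ingredients are assembled, winning of $\BA^{\mathbf{r},\mathbf{s}}(m,n)$, and hence its full Hausdorff dimension $mn$, follows from the general fact that winning sets in Euclidean space have full Hausdorff dimension.
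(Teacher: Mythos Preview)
The paper does not contain a proof of this statement: the theorem is quoted in the introduction as a result of Kleinbock and Weiss \cite{KLEINBOCK20101276} and is used only as background motivation, with no argument supplied in the body of the paper. There is therefore no ``paper's own proof'' to compare your proposal against.

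That said, your outline is a faithful high-level sketch of the Kleinbock--Weiss argument itself: the weighted Dani correspondence reducing the problem to bounded $g_t$-orbits, the partition of time into blocks of bounded distortion, the identification of dangerous matrices as lying near affine subspaces determined by short lattice vectors, the geometry-of-numbers bound on the number of such subspaces per block, and the affine-subspace avoidance step carried out inside the modified Schmidt game. You also correctly flag the essential difficulty (anisotropy of the weights destroying the round-ball-to-round-ball dictionary) and the reason the \emph{modified} game is needed rather than Schmidt's original one. As a proof sketch this is sound; turning it into a complete proof would require writing out the counting lemma and the avoidance lemma with explicit constants, but the architecture is right.
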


However, it is a major challenge in Diophantine approximations to compute the Hausdorff dimension of the set $\normalfont{\textbf{Sing}}(m, n)$ of singular matrices. The first breakthrough was made in 2011 by Cheung \cite{Cheung2007HausdorffDO}, who proved that the Hausdorff dimension of $\textbf{Sing}(2, 1)$ is $4/3$; this was extended in 2016 by Cheung and Chevallier \cite{Cheung2016HausdorffDO}, who proved that the Hausdorff dimension of $\textbf{Sing}(m, 1)$ is $m^2/(m+1)$ for
all $m \ge 2$; while Kadyrov, Kleinbock, Lindenstrauss, and Margulis \cite{KKLM}  proved that the Hausdorff dimension of $\textbf{Sing}(m, n)$ is at most $\de_{m,n}:=mn(1-\frac{1}{m+n})$. Most recently, Das, Fishman, Simmons and Urbański \cite{DFSU20} proved that this upper bounded is sharp. Their proof is based on a generalized variational principle and is independent of the previous results.

\begin{theorem} [\cite{DFSU20}]
For all $(m, n) \ne (1, 1)$, we have
$$\dim_H(\normalfont{\textbf{Sing}}(m, n)) = \dim_P(\normalfont{\textbf{Sing}}(m, n)) = \delta_{m,n},$$

where $\dim_H(S)$ and $\dim_P(S)$ denote the Hausdorff and packing dimensions of a set $S$, respectively.
\end{theorem}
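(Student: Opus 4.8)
The plan is to prove the two displayed equalities from three facts, valid for $(m,n)\ne(1,1)$: (i) $\dim_H(\Sing(m,n))\ge\delta_{m,n}$; (ii) $\dim_P(\Sing(m,n))\le\delta_{m,n}$; and (iii) the general inequality $\dim_H\le\dim_P$. Together with the Kadyrov--Kleinbock--Lindenstrauss--Margulis bound $\dim_H(\Sing(m,n))\le\delta_{m,n}$ recalled above, (i)--(iii) force $\dim_H=\dim_P=\delta_{m,n}$. Throughout I would work in $\Omega=\SL(m+n,\R)/\SL(m+n,\Z)$: by the Dani correspondence recalled above together with Mahler's compactness criterion, $A\in\Sing(m,n)$ if and only if $\lambda_1(g_t u_A\Z^{m+n})\to0$ as $t\to+\infty$, where $\lambda_1$ denotes the length of a shortest nonzero lattice vector. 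Since $u_{A+N}\Z^{m+n}=u_A\Z^{m+n}$ for every integer matrix $N$, the set $\Sing(m,n)$ is periodic under integer translations, so it suffices to estimate the dimension of $\Sing(m,n)\cap[0,1]^{mn}$.

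For the packing upper bound (ii) I would revisit the counting method behind the KKLM estimate. Fix $\e>0$; if $A$ is singular then for some $Q_0$ the matrix $A$ lies in the set $S_{\e,Q_0}\subseteq[0,1]^{mn}$ of those $A$ for which \eqref{eq:1.1} has a solution for every $Q\ge Q_0$, each $S_{\e,Q_0}$ being closed and hence compact, and $\Sing(m,n)\cap[0,1]^{mn}\subseteq\bigcap_{\e>0}\bigcup_{Q_0}S_{\e,Q_0}$. Passing to $\Omega$, a matrix in $S_{\e,Q_0}$ has a trajectory that, after a bounded time, stays in a fixed cusp neighbourhood whose depth grows as $\e\to0$; the covering estimate of KKLM then bounds the upper box dimension of $S_{\e,Q_0}$ by $\delta_{m,n}+\eta(\e)$ with $\eta(\e)\to0$ -- the exponent $\delta_{m,n}$ reflecting the balance between the measure-theoretic sparsity of lattices that remain near the cusp throughout a long time window and the conformal expansion rate $e^{t(m+n)/(mn)}$ of $g_t$ on the horospherical subgroup $U\cong\R^{mn}$. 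By monotonicity and countable stability of packing dimension, $\dim_P(\Sing(m,n))\le\inf_{\e>0}\sup_{Q_0}\dim_P(S_{\e,Q_0})\le\inf_{\e>0}\sup_{Q_0}\overline{\dim}_B(S_{\e,Q_0})\le\delta_{m,n}$.

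The substance of the theorem is the lower bound (i), for which I would follow the variational principle of Das--Fishman--Simmons--Urba\'nski through the parametric geometry of numbers of Schmidt--Summerer and Roy. To each $A$ one attaches the curve $t\mapsto(\log\lambda_1(g_t u_A\Z^{m+n}),\dots,\log\lambda_{m+n}(g_t u_A\Z^{m+n}))$ of logarithmic successive minima; it stays within bounded distance of a combinatorial object $\mathcal{T}=(\mathcal{T}_1,\dots,\mathcal{T}_{m+n})$ -- a \emph{template}, or $(m+n)$-system in Roy's sense -- and, in this dictionary, $A$ is singular exactly when $\mathcal{T}_1(t)\to-\infty$. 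The key tool is a two-sided variational principle computing $\dim_H$ (and $\dim_P$) of the set $\{A:\text{the template of }A\text{ lies in a prescribed family}\}$ as the supremum over that family of an explicit ``average-contraction'' functional $\Phi$; its upper half recovers the KKLM bound, and specializing its lower half to the family of all singular templates gives $\dim_H(\Sing(m,n))\ge\sup\{\Phi(\mathcal{T}):\mathcal{T}_1\to-\infty\}$. It then remains to solve this optimization. Intuitively the supremum is attained at the template that departs as little as possible from the generic equilibrium configuration (all logarithmic minima identically $0$) while still forcing $\mathcal{T}_1\to-\infty$ and respecting Minkowski's normalization $\sum_i\mathcal{T}_i\equiv0$; one writes down this explicit self-similar ``divergent'' template and evaluates $\Phi$ on it, obtaining $mn\bigl(1-\tfrac{1}{m+n}\bigr)=\delta_{m,n}$. (The hypothesis $(m,n)\ne(1,1)$ enters precisely here: when $m+n=2$ the candidate extremal template is realized only by $A\in\Q$, the construction degenerates, and indeed $\dim_H(\Q)=0<\tfrac{1}{2}=\delta_{1,1}$.)

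I expect the main obstacle to be the lower half of the variational principle, namely: given a template $\mathcal{T}$, build a Cantor set of matrices realizing $\mathcal{T}$ whose Hausdorff dimension is at least $\Phi(\mathcal{T})-o(1)$. The construction is a nested family whose level-$k$ pieces are boxes of matrices whose $g_t$-trajectory shadows an ever longer initial arc of $\mathcal{T}$, the branching number at each level being dictated by how many integer approximation vectors are compatible with the template's prescribed slopes. Two points require genuine care. First, \emph{compatibility}: after committing at level $k$ to an initial segment of $\mathcal{T}$ and to a family of short integer vectors, one must still be able to proceed to level $k+1$ -- this is a quantitative sharpening of Roy's realization theorem. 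Second, \emph{geometry}: to run a mass-distribution (Frostman) argument one needs the level-$k$ boxes -- images of standard boxes under the sheared and stretched maps $g_t u_A$ -- to have bounded eccentricity and to be adequately separated, so that the natural measure $\mu$ on the limit set obeys $\mu(B(x,r))\lesssim r^{\Phi(\mathcal{T})-o(1)}$. Controlling these distortions uniformly along the whole construction is the technical heart of the argument; by comparison, the Dani correspondence, the dimension inequalities, and the final template optimization are either soft or a finite-dimensional computation.
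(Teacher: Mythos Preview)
The paper does not contain a proof of this statement: it is quoted as a theorem of Das--Fishman--Simmons--Urba\'nski \cite{DFSU20} and used as background, with no proof supplied. So there is nothing in the paper to compare your proposal against directly.

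That said, your sketch is a fair outline of the DFSU20 strategy, and the paper does record the two key ingredients you invoke: the variational principle (Theorem~\ref{variational principle version 2} and its variants), and the explicit ``standard quadrilateral'' template of Example~\ref{quadtpl} with $r=1$, whose average contraction rate is computed there to be exactly $mn-\tfrac{mn}{m+n}=\delta_{m,n}$. What the paper never does is assemble these into a proof of the displayed equalities; in particular it does not carry out the packing upper bound (your step (ii)), nor does it argue that $\delta_{m,n}$ is the \emph{supremum} of $\underline{\delta}(\f)$ over all singular templates (only that it is attained). Your proposal fills in both of these, correctly identifying the Cantor-set construction behind the lower half of the variational principle as the real work. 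If you intend to present a self-contained proof, be aware that the paper's toolkit gives you the variational principle as a black box and the extremal template, but not the optimality of that template among all singular templates --- for that you would either need to cite DFSU20 directly or supply a separate argument bounding $\underline{\delta}(\f)$ from above whenever $f_1\to-\infty$.
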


\begin{remark}
When $m=n=1, \dim_H(\normalfont{\textbf{Sing}}(m, n)) = \dim_P (\normalfont{\textbf{Sing}}(m, n))=0$, since in this case $\normalfont{\textbf{Sing}}(m, n)$ is simply the set of rational numbers.
\end{remark}

\vspace{1cm}
\subsection{Successive minima functions and matrices with Diophantine Approximation properties of higher orders}

\begin{definition} \label{sm}
Let $d = m + n$, and for each $j = 1,\dots, d,$
let $\lambda_j (\Lambda)$ denote the $j$-th minimum of a lattice $\Lambda \subset \mathbb R^d$
(with respect to the $l^2$ norm on $\mathbb R^d$ \footnote{Note that $\|\cdot\|_{\infty} \le \|\cdot\|_{2} \le \sqrt{d} \|\cdot\|_{\infty} $. So if we use the maximum norm $\|\cdot\|:=\|\cdot\|_{\infty}$ to define $\la_i$, the resulting $\la_i^{\infty}$ is equivalent to $\la_i$ up to a multiple constant depending on $d$, which doesn't change any results below. We use $l^2$ norm here since it is most common in literature.}), i.e. the
infimum of $\lambda$ such that the set $\{r \in \Lambda : \|r\|_2 \le  \lambda \}$ contains $j$ linearly independent vectors.
	
For a $m\times n$ matrix $A$, the \textit{successive minima function} of the matrix A, denoted 
$\h=\h_A=(h_1,\dots,h_d):[0,\infty) \to \mathbb R^d$ is defined by the formula
\begin{equation}
h_i(t):=\log \lambda_i(g_t u_A \Z^d).
\end{equation}

\end{definition}

Then the Dani's correspondence principle can be translated into the language of successive minima function as follows:

\begin{table}
\begin{center}
\setlength\tabcolsep{20pt} 
\begin{tabular}{|c|c|}

\hline
Diophantine properties of $A$ & Dynamical properties of $(g_t u_A x_0)_{t\ge 0}$       \\ \hline
$A$ is badly approximable     & $\sup_{t\ge 0} -\h_{A,1(t)}<\infty$            \\ \hline
$A$ is singular               & $\lim_{t\to \infty} -\h_{A,1(t)}=\infty$                   \\  \hline
\end{tabular}
\caption{Dani's correspondence with successive minima function.}
\end{center}
\end{table}

In light of successive minima functions, we can generalize the notion of badly approximable matrices, singular matrices as follows:

\begin{definition}
For $r=1,2,\dots,d=m+n$, A matrix $A \in M(m\times n ,\mathbb R)$ is called \textbf{badly approximable of order $r$} if 
$$sup_{t\ge 0} -\h_{A,r}(t)<\infty.$$

$A \in M(m\times n ,\mathbb R)$ is called \textbf{singular of order $r$} if 
$$\lim_{t\to \infty} -\log \h_{A,r}(t)=\infty .$$

Let $\BA_r(m,n)$ (resp. $\Sing_r(m,n)$) denote the set of badly approximable (resp. singular) $m\times n$ matrices. $\BA_r(m,n)$ (resp. $\Sing_r(m,n)$) form an ascending (descending) sequence of sets in $r$.
\end{definition}

\subsection{Dani's correspondence for Diophantine
Approximation properties of higher orders}

For $r=1,2,...,d$ and a lattice $\La$, let $I^r(\La)$ denote the set of all $r$-tuples of linearly independent vectors $(v_1,\dots,v_r)$. \label{the notation I^r}

\begin{theorem}
A matrix $A\in \R^{m\times n}$ is badly approximable of order $r$ if and only if there exists $c>0$ such that for all linearly independent $r$ vectors $(\p_1,\q_1),\dots, (\p_r,\q_r)\in Z^m\times (\Z^n-\{0\})$, there exists $1\le i \le r$ satisfying 
\begin{equation*}
    \|A\q_i-\p_i\|\ge \frac{c}{\|\q_i\|^{\frac{n}{m}}}.
\end{equation*}
\end{theorem}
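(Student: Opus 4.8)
The plan is to recast the definition dynamically and then translate short lattice vectors into Diophantine inequalities. By definition $A$ is badly approximable of order $r$ if and only if $\inf_{t\ge 0}\la_r(g_tu_A\Z^d)>0$, where $d=m+n$. The elementary dictionary is this: for $(\p,\q)\in\Z^m\times\Z^n$ one has $g_tu_A(\p,\q)^{T}=\big(e^{t/m}(A\q+\p),\,e^{-t/n}\q\big)$, so up to the fixed factor $\sqrt d$ relating the $l^2$ norm (used to define $\la_j$) and the sup norm (used in the inequalities), the norm of this vector equals $\max\!\big(e^{t/m}\|A\q+\p\|,\,e^{-t/n}\|\q\|\big)$; for $t\ge 0$ every nonzero lattice vector with $\q=0$ has norm $\ge 1$, so among lattice vectors of norm $<1$ the constraint $\q\neq 0$ is automatic, and replacing $\p$ by $-\p$ is harmless. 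Finally $\la_r(g_tu_A\Z^d)<\e$ holds exactly when $g_tu_A\Z^d$ contains $r$ linearly independent vectors of norm $<\e$. I prove the two contrapositive implications.

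Suppose first that $A$ is not badly approximable of order $r$ and fix $\e\in(0,1)$. Then there are $t\ge 0$ and linearly independent $(\p_1,\q_1),\dots,(\p_r,\q_r)\in\Z^d$, necessarily with $\q_i\neq 0$, such that $e^{t/m}\|A\q_i+\p_i\|<\e$ and $e^{-t/n}\|\q_i\|<\e$ for all $i$. The second inequality gives $e^{t/m}=(e^{t/n})^{n/m}>(\|\q_i\|/\e)^{n/m}$, and substituting into the first yields $\|A\q_i+\p_i\|<\e^{\,1+n/m}\|\q_i\|^{-n/m}$ for every $i$. As $\e$ is arbitrary, no $c>0$ satisfies the right-hand condition, so it fails.

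Suppose now that the right-hand condition fails; I must exhibit, for each $\e>0$, a single $t\ge 0$ at which $g_tu_A\Z^d$ has $r$ linearly independent vectors of norm $<\e$. The hypothesis supplies, for each $c>0$, linearly independent $(\p_1,\q_1),\dots,(\p_r,\q_r)$ with $\q_i\neq 0$ and $\|A\q_i+\p_i\|<c\|\q_i\|^{-n/m}$. For one such vector the right time is the balancing time $t_i:=\tfrac{mn}{m+n}\log\!\big(\|\q_i\|/\|A\q_i+\p_i\|\big)\ge 0$, at which its norm is $\asymp(\|A\q_i+\p_i\|^{m}\|\q_i\|^{n})^{1/(m+n)}<c^{m/(m+n)}$. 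The obstacle is that when the $\|\q_i\|$ have very different sizes the times $t_i$ are far apart, so no single $t$ makes all $r$ vectors short simultaneously; this is the only place where the argument is more than a formal rephrasing, and it is where the linear independence of the $r$ integer vectors must actually be used. The strategy I would pursue is first to reduce to the case $\|\q_1\|\asymp\cdots\asymp\|\q_r\|$, with implied constant depending only on $m$ and $n$ — for instance, starting from an arbitrary witnessing tuple, passing to the balancing time $\tau$ of its largest-$\|\q_i\|$ member (where $\la_1(g_\tau u_A\Z^d)$ is small), controlling the successive minima of $g_\tau u_A\Z^d$ by Minkowski's second theorem $\la_1(\Lambda)\cdots\la_d(\Lambda)\asymp 1$, and replacing the small-$\|\q_i\|$ members of the tuple by short lattice vectors at time $\tau$ to produce a new linearly independent $r$-tuple, still satisfying the Diophantine inequalities with a comparable constant, whose denominators are now mutually comparable. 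Granting this reduction, all the balancing times $t_i$ lie in a window of bounded length, so evaluating at $t=\max_i t_i$ makes every member of the tuple have norm $\lesssim c^{m/(m+n)}$; choosing $c$ small enough in terms of $\e$ (which also absorbs the $\sqrt d$ and the explicit constants above) gives $r$ linearly independent vectors of $g_tu_A\Z^d$ of norm $<\e$, hence $\la_r(g_tu_A\Z^d)<\e$. Letting $\e\to 0$ gives $\inf_{t\ge 0}\la_r(g_tu_A\Z^d)=0$, so $A$ is not badly approximable of order $r$.

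The reduction to comparable denominators is the one step I expect to be genuinely difficult; everything else is bookkeeping — the comparison between the $l^2$ norm defining $\la_j$ and the sup norm in the inequalities, tracking the constants $c$ against $\e$, and verifying linear independence of the auxiliary tuples. It is also exactly what makes orders $r\ge 2$ substantially different from the classical case $r=1$, where there is a single vector and no coordination between distinct approximations is required.
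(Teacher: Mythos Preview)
Your first contrapositive (not BA$_r$ implies the Diophantine condition fails) is clean and is essentially the paper's backward implication. For the forward direction the paper does not take your contrapositive route at all: it argues directly, fixes a linearly independent tuple with all $\q_i\ne 0$, chooses a single time $t$ with $e^{-t/n}\|\q_i\|=\de/2$ for the relevant index, and reads off $\|A\q_i+\p_i\|\ge c/\|\q_i\|^{n/m}$ from the fact that not all $r$ images can lie in $B_\de$ at that time. There is no Minkowski second theorem and no reduction to comparable denominators in the paper's argument.

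That said, the obstacle you isolate is exactly the point the paper's write-up glosses over. The hypothesis $\la_r(g_tu_A\Z^d)\ge\de$ only yields, for each $t$ and each tuple, \emph{some} index $i=i(t)$ whose image has norm $\ge\de$; the paper tacitly uses the stronger statement that a single $i$ works for all $t$, and only afterwards chooses $t=t(\de,\q_i)$, which is circular as written. If one instead picks $t$ so that $e^{-t/n}\max_j\|\q_j\|=\de/2$ (blocking every second alternative simultaneously), the surviving first alternative gives only $\|A\q_i+\p_i\|\ge c/(\max_j\|\q_j\|)^{n/m}$, weaker than the stated $c/\|\q_i\|^{n/m}$ when the $\|\q_j\|$ differ greatly in size; and for $r=d$ (where BA$_d$ holds for every $A$) a non--badly-approximable number already shows the stronger inequality can fail. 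So the ``coordination of balancing times'' problem you flag in the contrapositive and the quantifier slip in the paper's direct proof are the same gap seen from opposite sides; neither argument, as it stands, closes it. Your Minkowski-based reduction is a reasonable plan, but it remains to be carried out.
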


\begin{proof}
For the forward implication, notice that the definition of badly approximable of order $r$ is equivalent to saying that there exists $\de>0$ such that 
\begin{equation*}
    \la_r(g_t u_A \Z^d)\ge \de, \forall t>0.
\end{equation*}
Using our $I^r$ notation, this is the same as saying there exists $\de>0$ such that 
\begin{equation}\label{eq:intersection of I^r and B is empty}
    I^r(g_tu_A\Z^d)\cap B_{\de}^r= \varnothing, \forall t\ge 0.
\end{equation}
where $B_{\de}$ denotes the (open) ball in $\R^d$ centered at the origin with radius $\de$ and $B_{\de}$ means its $r$-fold Cartesian product.

But 
\begin{align}\label{eq:computatin of product}
    g_tu_A
    \begin{bmatrix}\p \\
    \q 
    \end{bmatrix}  
&=  \begin{bmatrix}
e^{\frac{t}{m}}I_m &  \\
& e^{-\frac{t}{n}}I_n
\end{bmatrix}   
\begin{bmatrix}
I_m & A  \\
 & I_n
\end{bmatrix} 
\begin{bmatrix}\p \\
\q 
\end{bmatrix}=
\begin{bmatrix}e^{\frac{t}{m}}(\p+A\q) \\
e^{-\frac{t}{n}}\q 
\end{bmatrix}.
\end{align}

So the above equation \ref{eq:intersection of I^r and B is empty} implies (from here we change the $2$-norm to $\infty$-norm) that there exists $\de>0$ such that for all linearly independent $r$ vectors $(\p_1,\q_1),\dots, (\p_r,\q_r)\in Z^m\times (\Z^n-\{0\})=\Z^d$, there exists $1\le i \le r$ satisfying
\begin{align*}
    & \|e^{\frac{t}{m}}(A\q_i+\p_i)\|\ge \de \text{ or }\\
    & \|e^{-\frac{t}{n}}\q_i\| \ge \de
\end{align*}
for all $t\ge 0$. Note that since $\q_i \ne 0$, we can choose $t=t(\de,q_i)$ so that 
$\|e^{-\frac{t}{n}}\q_i\|=\frac{\de}{2}<\de$, then the second possibility is blocked and 
$$\|A\q_i+\p_i\|\ge e^{-\frac{t}{m}}\de=(e^{-\frac{t}{n}})^{\frac{n}{m}} \de = \left(\frac{\de/2}{\|\q_i\|}\right)^{\frac{n}{m}}\de =:\frac{c}{\|\q_i\|^{\frac{n}{m}}}.$$
For the backward implication, there exists $c>0$ such that for all linearly independent $r$ vectors $(\p_1,\q_1),\dots, (\p_r,\q_r)\in Z^m\times (\Z^n-\{0\})$, there exists $1\le i \le r$ satisfying 
\begin{equation*}
    \|A\q_i-\p_i\|\ge \frac{c}{\|\q_i\|^{\frac{n}{m}}}.
\end{equation*}

We want to find $\de>0$ such that 
$$\la_r(g_tu_A\Z^d)\ge \de,$$
for all $t\ge 0$. From the computation of product \ref{eq:computatin of product}, this is the same as there exists $\de>0$, such that for all $t\ge 0$,
\begin{equation*}
   \la_r\left( \begin{bmatrix}e^{\frac{t}{m}}(\p+A\q) \\
e^{-\frac{t}{n}}\q 
\end{bmatrix} : \p \in \Z^m, \q \in \Z^n \right)\ge \de
\end{equation*}
Suppose on the contrary that this is not possible, then for any $\de>0$, there exists $t\ge 0$ and linearly independent $r$ vectors $(\p_1,\q_1),\dots, (\p_r,\q_r)\in Z^m\times \Z^n$ satisfying
\begin{align*}
    & \|e^{\frac{t}{m}}(A\q_i+\p_i)\|< \de \text{ and }\\
    & \|e^{-\frac{t}{n}}\q_i\| < \de
\end{align*}
for all $1\le i \le r.$
But the first equation times the $\frac{n}{m}$-th power of the second equation yields 
\begin{equation*}
    \|A\q_i+\p_i\|\cdot \|\q_i\|^{\frac{n}{m}}<\de^{\frac{m+n}{m}}
\end{equation*}
for $i=1,2,...,r$, contradicting to the statement after ``if and only if" whenever $c<\de^{\frac{m+n}{m}}$.
\end{proof}

\begin{theorem}
A matrix $A\in \R^{m\times n}$ is singular of order $r$ if and only if for any $\e>0$ there exists $Q_{\e}>0$ such that for all $Q>Q_{\e}$, there exist $r$ linearly independent vectors $(\p_1,\q_1),\dots, (\p_r,\q_r)\in \Z^m\times (\Z^n-\{0\})$ satisfying 
\begin{align*}
    &\|A\q_i-\p_i\|\le \frac{\de}{\|\q_i\|^{\frac{n}{m}}},\\ 
    \text{and }& 0<\|\q_i\|<Q.
\end{align*}
for all $1 \le i \le d$.
\end{theorem}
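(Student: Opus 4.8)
The plan is to mimic the proof of the previous theorem characterizing badly approximable matrices of order $r$, replacing the uniform-in-$t$ lower bound on $\la_r$ with the eventual divergence condition. First I would recall that by Definition of singular of order $r$, the matrix $A$ is singular of order $r$ precisely when $\lim_{t\to\infty}\la_r(g_tu_A\Z^d)=0$; equivalently, for every $\de>0$ there exists $T_\de\ge 0$ so that for all $t\ge T_\de$ we have $\la_r(g_tu_A\Z^d)<\de$. Using the $I^r$ notation, this says: for every $\de>0$ there is $T_\de$ such that for all $t\ge T_\de$ the set $I^r(g_tu_A\Z^d)$ meets $B_\de^r$, i.e.\ there exist $r$ linearly independent integer vectors $(\p_1,\q_1),\dots,(\p_r,\q_r)$ with $\|g_tu_A(\p_i,\q_i)^{\mathrm T}\|<\de$ for all $1\le i\le r$.

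Next I would run the same computation as in \eqref{eq:computatin of product}: $g_tu_A(\p,\q)^{\mathrm T}=(e^{t/m}(\p+A\q),\,e^{-t/n}\q)^{\mathrm T}$, so the condition $\|g_tu_A(\p_i,\q_i)^{\mathrm T}\|_\infty<\de$ is equivalent to the two inequalities $\|e^{t/m}(A\q_i+\p_i)\|<\de$ and $\|e^{-t/n}\q_i\|<\de$. From the second we read off $0<\|\q_i\|<\de\,e^{t/n}$, and multiplying the first by the $(n/m)$-th power of the second yields $\|A\q_i+\p_i\|\cdot\|\q_i\|^{n/m}<\de^{(m+n)/m}$, hence $\|A\q_i+\p_i\|<\de^{(m+n)/m}\|\q_i\|^{-n/m}$. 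To match the stated form one sets the given $\e$ (the paper writes $\de$ in the displayed inequality, presumably a typo for the quantified $\e$), chooses $\de$ with $\de^{(m+n)/m}\le\e$, and then, given $Q>Q_\e$, picks $t=t(Q)$ with $\de e^{t/n}=Q$, i.e.\ $t=n\log(Q/\de)$; as long as $Q_\e$ is taken large enough that this $t$ exceeds $T_\de$, the $r$ linearly independent vectors furnished above satisfy $\|A\q_i-\p_i\|\le\e\|\q_i\|^{-n/m}$ and $0<\|\q_i\|<Q$. This proves the forward direction; I would also note $\q_i\ne 0$ automatically since $e^{-t/n}\q_i$ together with the other coordinates would otherwise give a zero vector among a linearly independent tuple.

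For the converse I would argue contrapositively or directly: assume the Diophantine condition, fix $\e>0$, choose $Q>Q_\e$ arbitrarily large, obtain the $r$ linearly independent $(\p_i,\q_i)$, and set $t=t(Q)$ so that the largest of the values $e^{-t/n}\|\q_i\|$ is exactly of size comparable to $\e^{m/(m+n)}$; then reversing the two-inequality computation shows $\la_r(g_tu_A\Z^d)\le C\e^{m/(m+n)}$ for a constant depending only on $m,n,d$. Since $\e$ is arbitrary and $t(Q)\to\infty$ as $Q\to\infty$, and since the successive minima function varies in a controlled (Lipschitz) way in $t$ so that smallness along a sequence $t_k\to\infty$ propagates to all large $t$, this forces $\lim_{t\to\infty}\la_r(g_tu_A\Z^d)=0$, i.e.\ $A$ is singular of order $r$. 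The one genuinely delicate point—and the part I expect to be the main obstacle—is this last step: the Diophantine hypothesis only produces, for each $Q$, one value of $t$ at which $\la_r$ is small, whereas singularity of order $r$ requires $\la_r(g_tu_A\Z^d)$ to be small for \emph{all} sufficiently large $t$. I would close this gap by exploiting that $h_r(t)=\log\la_r(g_tu_A\Z^d)$ is Lipschitz in $t$ with constant $\max(1/m,1/n)$ (a standard fact, since $g_{t+s}=g_tg_s$ and $g_s$ distorts norms by at most $e^{|s|\max(1/m,1/n)}$), so that controlling $\la_r$ on a net of points $t_k$ spaced $O(1)$ apart—which one arranges by letting $Q$ range over a suitable sequence—controls it on the whole ray; one must also check that consecutive choices of $Q$ indeed yield $t$-values that interleave densely enough, which is a routine estimate from $t(Q)=n\log(Q/\de)$.
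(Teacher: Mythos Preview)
Your forward direction is essentially the paper's: both pass from the dynamical statement to the pair of inequalities via \eqref{eq:computatin of product}, fix $\de$ as a suitable power of $\e$ (the paper takes $\de=\tfrac12\e^{m/(m+n)}$), and translate between $t$ and $Q$ through $Q=\de\,e^{t/n}$.

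The backward direction is where the two arguments part ways. You produce, for each $Q$, a single time $t(Q)$ at which $\la_r$ is small, and then invoke the Lipschitz continuity of $t\mapsto h_r(t)$ to fill in the gaps. This can be made to work, but it is an unnecessary detour, and your write-up contains an inconsistency that matters: you first set $t(Q)$ so that $e^{-t/n}\max_i\|\q_i\|\asymp\e^{m/(m+n)}$, and only later write $t(Q)=n\log(Q/\de)$. These are different. With the first choice the bound $e^{t/m}\|A\q_i+\p_i\|\lesssim\e^{m/(m+n)}$ fails for those $i$ with $\|\q_i\|\ll\max_j\|\q_j\|$, so the claimed control on $\la_r$ does not follow; with the second choice (and the intended hypothesis $\|A\q_i+\p_i\|\le\e Q^{-n/m}$, as the paper's proof in fact uses) the computation goes through, and moreover $t(Q)$ then depends continuously on $Q$ alone, so the Lipschitz interpolation becomes superfluous.

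The paper sidesteps all of this by noting that for each fixed $Q$ the two inequalities $e^{t/m}\e Q^{-n/m}<\de$ and $e^{-t/n}Q<\de$ hold not at a single $t$ but on an entire interval
\[
J_Q=\Bigl(n\log(Q/\de),\,m\log\bigl(\de Q^{n/m}/\e\bigr)\Bigr),
\]
which is nonempty once one fixes $\e=\tfrac12\de^{(m+n)/m}$. Since the left endpoints $n\log(Q/\de)$ sweep continuously from $T_\de:=n\log(Q_\e/\de)$ to infinity as $Q$ ranges over $[Q_\e,\infty)$, the union $\bigcup_{Q\ge Q_\e}J_Q$ already covers $[T_\de,\infty)$, and one concludes $\la_r(g_tu_A\Z^d)<\de$ for all $t\ge T_\de$ directly, with no interpolation needed.
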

\begin{proof} We first prove the forward direction. The aim is to find $Q_{\e}$.
Fix $\e>0$, the definition of singularity of order $r$ is the same as saying that for any $\de>0$, there exists $T_{\de}>0$ such that for all $t\ge T_{\de}$,
\begin{equation*}
    \la_r(g_t u_A \Z^d)< \de.
\end{equation*}
In view of the equation \ref{eq:computatin of product}, this is equivalent to that for any $\de>0$, there exists $T_{\de}>0$ such that for all $t\ge T_{\de}$, there exist linearly independent $r$ vectors $(\p_1,\q_1),\dots, (\p_r,\q_r)\in Z^m\times \Z^n=\Z^d$ with 
\begin{equation}\label{the less than inequality for singular}
    \begin{cases}
     \|e^{\frac{t}{m}}(A\q_i+\p_i)\|< \de \\
     \|e^{-\frac{t}{n}}\q_i\| < \de
\end{cases}
\end{equation}
for all $1\le i\le d$.

In order to find the $Q_{\e}$ we need, we consider the system of inequality
\begin{equation}\label{solve for Q}
    \begin{cases}
        e^{-\frac{t}{m}}\de \le \e Q^{-\frac{n}{m}}\\
        e^{\frac{t}{n}}\de \le Q
    \end{cases}
\end{equation}
and solve it for $Q$. Note that this is equivalent to 
\begin{equation}\label{inequality for Q}
  \de e^{\frac{t}{n}}  \le Q \le \left( \frac{\e}{\de} \right)^{\frac{m}{n}}e^{\frac{t}{n}}
\end{equation}

We first fix $\de:= \frac{1}{2}\e^{\frac{m}{m+n}} < \e^{\frac{m}{m+n}}$ so that \ref{inequality for Q} is solvable for $Q$ and it follows that as long as  
$$Q\in I=\cup_{t\ge T_{\de}} I(t),$$
where
$$I(t):=\left[\de e^{\frac{t}{n}}, \left( \frac{\e}{\de} \right)^{\frac{m}{n}}e^{\frac{t}{n}} \right],$$
then \ref{inequality for Q} holds. Therefore, our choice of $Q_{\e}$ in the statement of the theorem can be 
$$Q_{\e}:=\frac{1}{2}\e^{\frac{m}{m+n}}e^{\frac{T_{\de}}{n}}.$$

For the backward direction, suppose now for any $\e>0$ there exists $Q_{\e}>0$ such that for all $Q>Q_{\e}$, there exist $r$ linearly independent vectors $(\p_1,\q_1),\dots, (\p_r,\q_r)\in \Z^m\times (\Z^n-\{0\})$ satisfying 
\begin{align}\label{eq: conditions for singular}
    &\|A\q_i-\p_i\|\le \frac{\de}{\|\q_i\|^{\frac{n}{m}}} \text{ and } \nonumber\\ 
    & 0<\|\q_i\|<Q,
\end{align}
for all $1 \le i \le d$.

For any $\de>0$, we want to find $T_{\de}>0$ such that for any $t\ge T_{\de}$, 
$$\la_r(g_tu_A\Z^d)<\de.$$

Again from \ref{eq:computatin of product}, what we need is 
\begin{equation*}
    \begin{cases}
     \|e^{\frac{t}{m}}(A\q_i+\p_i)\|< \de \\
     \|e^{-\frac{t}{n}}\q_i\| < \de
\end{cases}
\end{equation*}
From \ref{eq: conditions for singular}, we solve
\begin{equation}\label{solve for t}
    \begin{cases}
         e^{\frac{t}{m}} \e Q^{-\frac{n}{m}} < \de\\
         e^{-\frac{t}{n}} Q < \de
    \end{cases}
\end{equation}
for $t$ as
$$n\log \left(\frac{Q}{\de}\right)<t<m\log \left(\frac{\de}{\e} Q^{\frac{n}{m}} \right )$$
This is solvable as long as we choose $\e:=\frac{1}{2}\de^{\frac{m+n}{m}}<\de^{\frac{m+n}{m}}$. Let $$J=\cup_{Q\ge Q_{\e}}J_Q,$$
where $J_Q:=\left[n\log \left(\frac{Q}{\de}\right), m\log \left(\frac{\de}{\e} Q^{\frac{n}{m}} \right )\right]$. It follows that the $T_{\de}$ we need can be taken as 
$$T_Q:=n\log \left(\frac{Q_{\e}}{\de}\right).$$
\end{proof}

\vspace{1cm}
\subsection{Hausdorff and packing dimensions and the statement of main results \label{1.2}} 

The \textit{s-dimensional Hausdorff measure} of a set $S \subset \R^D$ is defined to be
\begin{equation}
    \Ha^s(S):=\lim_{\delta \to 0}\inf\left \{\sum_{i=1}^\infty (\operatorname{diam} U_i)^s: \bigcup_{i=1}^\infty U_i\supseteq S, \operatorname{diam} U_i<\delta\right \}
\end{equation}

The \textit{s-dimensional packing measure} of a set $S$ is defined as
\begin{equation}
  \Pa^s (S) = \inf \left\{ \left. \sum_{j \in J} \Pa_0^s (S_j) \right| S \subseteq \bigcup_{j \in J} S_j, J \text{ countable} \right\},
\end{equation}
where $\Pa^S_0$, called the \textit{s-dimensional packing pre-measure}, is defined as
\begin{equation}
    \Pa_0^s (S) = \limsup_{\delta \to 0}\left\{ \left. \sum_{i \in I} \mathrm{diam} (B_i)^s \right| \begin{matrix} \{ B_i \}_{i \in I} \text{ is a countable collection} \\ \text{of pairwise disjoint closed balls with} \\ \text{diameters } \leq \delta \text{ and centres in } S \end{matrix} \right\}.
\end{equation}

Given the measures defined above, we define the Hausdorff dimension and packing dimension of a set $S\subset \R^D$ as follows:
\begin{equation}
    \dim_{\mathrm{H}}(S)=\inf\{d\ge 0:\Ha^d(S)=0\}=\sup\left(\{d\ge 0:\Ha^d(S)=\infty\}\cup\{0\}\right),
\end{equation}
\begin{equation}
\dim_{\mathrm{P}} (S)  = \sup \{ s \geq 0 | \Pa^s (S) = + \infty \}  = \inf \{ s \geq 0 | \Pa^s (S) = 0 \}.
\end{equation}

Our first result is on the measure of badly approximable matrices of higher orders:

\begin{theorem} \label{bar}
	$\BA_{d}(m,n)=\R^{m\times n}$. For all $r=1,2,\dots, d-1$, $\BA_r(m,n)$ is Lebesgue null. 
\end{theorem}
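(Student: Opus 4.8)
\emph{Case $r=d$.} I would deduce $\BA_d(m,n)=\R^{m\times n}$ from the elementary fact that $\la_d(\La)\ge 1$ for every unimodular lattice $\La\subset\R^d$. Indeed, choosing linearly independent $v_1,\dots,v_d\in\La$ with $\|v_i\|_2=\la_i(\La)$, they generate a sublattice $\La'\le\La$ of index $\ge 1$, so $\operatorname{covol}(\La')\ge 1$, while Hadamard's inequality gives $\operatorname{covol}(\La')=|\det(v_1,\dots,v_d)|\le\prod_i\|v_i\|_2=\prod_i\la_i(\La)$; since $\la_1\le\cdots\le\la_d$ this yields $\la_d(\La)\ge1$. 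Applying this to $\La=g_tu_A\Z^d$, which is unimodular as $g_tu_A\in\SL(d,\R)$, gives $-\h_{A,d}(t)\le 0$ for all $t\ge 0$, so every matrix is badly approximable of order $d$.

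\emph{Reduction for $r\le d-1$.} Put $x_0=\Z^d$, $K_\de=\{\La:\la_r(\La)\ge\de\}$, and $\mathcal E_\de=\{A\in\R^{m\times n}:g_tu_Ax_0\in K_\de\text{ for all }t\ge0\}$. Since $A$ is badly approximable of order $r$ precisely when $\inf_{t\ge0}\la_r(g_tu_Ax_0)>0$, we have $\BA_r(m,n)=\bigcup_{k\ge1}\mathcal E_{1/k}$, so it is enough to show that each $\mathcal E_\de$ (which is closed, hence measurable) is Lebesgue null. The argument rests on two facts. First, because $r\le d-1$, the open set $\{\La:\la_r(\La)<\eta\}$ is nonempty for every $\eta>0$: it contains a neighborhood of the diagonal unimodular lattice whose first $r$ diagonal entries equal $\eta':=\min(\eta,1)/2$ and whose last $d-r$ entries equal $(\eta')^{-r/(d-r)}$. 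Hence this set has positive Haar measure. (For $r=d$ it is empty when $\eta\le 1$, which is exactly what distinguishes that case.) Second, the one-parameter flow $(g_t)_{t\in\R}$ acts ergodically — even mixingly — on $X=\SL(d,\R)/\SL(d,\Z)$ with its invariant probability $\mu_X$: $\SL(d,\R)$ is simple with finite center and acts ergodically on $X$, and $\{g_t:t\in\R\}$ has noncompact closure, so Theorem~\ref{howe} applies.

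\emph{The thickening argument.} Assume for contradiction $|\mathcal E_\de|>0$. Let $W^-$ be the group of lower block-unipotent matrices $\begin{bmatrix}I_m&0\\ B&I_n\end{bmatrix}$ and $W^0$ the group of block-diagonal matrices $\begin{bmatrix}P&0\\0&Q\end{bmatrix}$ with $\det P\,\det Q=1$; together with $W^+=\{u_A\}$ these give a local product decomposition of $\SL(d,\R)$ near the identity, with $g_t$ contracting $W^-$, commuting with $W^0$, and expanding $W^+$. Fix relatively compact identity neighborhoods $\mathcal B\subset W^-$ and $\mathcal C\subset W^0$, with $\mathcal B$ contracted into itself by $g_s$ for all $s\ge0$, and set
\[
\mathcal G:=\{\,w^-w^0\,g_tu_Ax_0 \;:\; w^-\in\mathcal B,\ w^0\in\mathcal C,\ t\ge0,\ A\in\mathcal E_\de\,\}\subset X .
\]
The plan is to verify: (i) $\mu_X(\mathcal G)>0$, by taking $t=0$ and $A$ in a small cube on which $\mathcal E_\de$ has positive measure and using that $(w^-,w^0,A)\mapsto w^-w^0u_Ax_0$ is a local diffeomorphism pushing Lebesgue measure to a measure comparable to $\mu_X$ (a consequence of unimodularity of $\SL(d,\R)$ together with the local product structure); (ii) $\mu_X(\mathcal G)<1$, because every point of $\mathcal G$ is a left translate, by an element of the precompact set $\mathcal B\,\mathcal C$, of a lattice in $K_\de$, and left translation changes $\la_r$ by at most an operator norm, so $\mathcal G\subset K_{c\de}$ for some $c\in(0,1]$ and $\mu_X(\mathcal G)\le 1-\mu_X(\{\la_r<c\de\})<1$ by the first fact; (iii) $g_s\mathcal G\subset\mathcal G$ for every $s\ge0$, since $g_sw^-g_{-s}\in\mathcal B$, $g_sw^0g_{-s}=w^0$, and $g_sg_t=g_{s+t}$ with $s+t\ge0$. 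From (iii), $\mathcal G\supseteq g_1\mathcal G\supseteq g_2\mathcal G\supseteq\cdots$ is a decreasing sequence of sets of equal $\mu_X$-measure whose intersection $\mathcal G_\infty$ is $(g_s)_{s\in\R}$-invariant with $\mu_X(\mathcal G_\infty)=\mu_X(\mathcal G)>0$; ergodicity then forces $\mu_X(\mathcal G_\infty)=1$, contradicting (ii). Hence $|\mathcal E_\de|=0$ for every $\de$, and $\BA_r(m,n)$ is Lebesgue null.

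\emph{Expected main obstacle.} The dynamics itself is easy: the ``trap'' $\{x\in X:g_tx\in K_\de\ \forall t\ge0\}$ is $\mu_X$-null by Birkhoff's theorem. The real difficulty is the transference from this to a \emph{Lebesgue}-null statement about the slice $\{A:u_Ax_0\in\text{trap}\}$, which is a $\mu_X$-null subset of $X$ and so cannot be controlled by the ambient measure directly. Thickening $\mathcal E_\de$ by precisely the non-expanding directions $W^-W^0$ together with the forward flow, and using $g_n$-invariance of $\mu_X$, is the mechanism that converts a positive-measure slice into a genuinely $(g_t)$-forward-invariant positive-measure subset of $X$ where ergodicity can be applied; checking that this thickened set still avoids a fixed open set of positive measure, step (ii), is where the hypothesis $r\le d-1$ enters. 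The remaining point requiring care is the measure comparison in step (i).
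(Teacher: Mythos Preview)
Your argument for $r=d$ is essentially the paper's: both rest on $\prod_i \lambda_i(\Lambda)\gtrsim 1$ for unimodular $\Lambda$, you via Hadamard's inequality, the paper via Minkowski's second theorem.

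For $r\le d-1$ your proof is correct but organized differently. The paper works \emph{top-down}: it first applies Birkhoff's ergodic theorem to the indicator of $\{\lambda_r<\delta\}$ to conclude that the trap $\{\Lambda:\inf_{t\ge0}\lambda_r(g_t\Lambda)\ge\delta\}$ is $\mu_X$-null, and then transfers this to Lebesgue-nullity of the $U^+$-slice by writing $G=P_-U^+$ modulo a Haar-null set, decomposing Haar measure as a product (Knapp, Theorem~8.32), and using Fubini together with the boundedness of $\{g_t\,p\,g_{-t}:t\ge0\}$ for $p\in P_-$. You argue \emph{bottom-up}: assuming the slice has positive Lebesgue measure, you thicken it by the weak-stable group $W^-W^0=P_-$ and by the forward flow to obtain a set $\mathcal G\subset X$ of positive $\mu_X$-measure that is $g_s$-forward-invariant yet confined to $K_{c\delta}$, and then extract a $g_1$-invariant subset of the same measure, contradicting ergodicity. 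The structural inputs are identical---contraction of $P_-$ under $g_t$-conjugation, ergodicity of the flow (Theorem~\ref{howe}), and $\mu_X(\{\lambda_r<\eta\})>0$---only the assembly differs. Your route bypasses Birkhoff's theorem and the explicit Haar-factorization formula, trading them for the (routine) measurability of $\mathcal G$ as an analytic set and the local measure comparison in step~(i), which you correctly flag as the point needing care. One small slip: $\mathcal G_\infty=\bigcap_{n\ge0}g_n\mathcal G$ is exactly $g_1$-invariant rather than $(g_s)_{s\in\R}$-invariant, but ergodicity of the single transformation $g_1$, which Theorem~\ref{howe} equally provides, already suffices for the contradiction.
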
 

Since $\BA_1(m,n) \subset \BA_2(m,n) \cdots \subset \BA_d(m,n)=\R^{m\times n}$, a natural question to ask next is how big the Gaps $BA_{r+1}(m,n)-BA_{r}(m,n)$ are in terms of fractional dimensions. We have

\begin{theorem}\label{hbar}
For $1\le r \le d-1$, we have the Hausdorff dimension for the gaps between the badly approximable matrices of order $r$ and $r+1$ is full:
$$\dim_H\big(\BA_{r+1}(m,n)-\BA_{r}(m,n) \big)=\dim_P\big(\BA_{r+1}(m,n)-\BA_{r}(m,n) \big)=mn.$$
\end{theorem}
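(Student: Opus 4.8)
The plan is to reduce the statement to a single Hausdorff lower bound and then produce the matrices in the gap by a Cantor construction. First, every subset of $\R^{m\times n}\cong\R^{mn}$ has packing dimension at most $mn$ and Hausdorff dimension at most its packing dimension, so it suffices to prove $\dim_H\big(\BA_{r+1}(m,n)-\BA_{r}(m,n)\big)\ge mn$; both equalities in the statement then follow. Secondly, the case $r=d-1$ is immediate: by Theorem~\ref{bar} we have $\BA_d(m,n)=\R^{m\times n}$, so the gap is exactly the complement of $\BA_{d-1}(m,n)$, which is Lebesgue null (again Theorem~\ref{bar}), and a set of full Lebesgue measure in $\R^{mn}$ has full Hausdorff and packing dimension. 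So fix $1\le r\le d-2$ from now on, the range in which $\BA_{r+1}(m,n)$ is itself Lebesgue null and real work is required.

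For each $\eta>0$ I would construct a compact set $K=K_\eta\subseteq\BA_{r+1}(m,n)-\BA_{r}(m,n)$ supporting a probability measure $\mu$ whose lower pointwise dimension is everywhere at least $mn-\eta$; the mass distribution principle then gives $\dim_H K\ge mn-\eta$, and $\eta\to0$ completes the proof. The set $K$ is the limit of a nested family of boxes built in two regimes. At the overwhelming majority of scales one makes moves that keep, inside every current box, a definite proportion of subboxes all of whose matrices $A$ satisfy $\la_{r+1}(g_tu_A\Z^d)\ge\de$ on the time interval attached to that scale. This uses the Schmidt-game machinery that proves $\BA_1(m,n)$ and its weighted analogues of Kleinbock--Weiss winning, adapted so that only $\la_{r+1}$ (rather than $\la_1$) is held bounded below; passing to the limit, every point of $K$ then satisfies $\la_{r+1}(g_tu_A\Z^d)\ge\de$ for all $t$, i.e. lies in $\BA_{r+1}(m,n)$. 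Along a very sparse subsequence of scales $\e_1\gg\e_2\gg\cdots\to0$ one interjects a forcing move: inside a current box of side $\e_k$ pick integer vectors $\q_1,\dots,\q_r$, as long as the time budget allows, together with a rational matrix $A_k$ satisfying $A_k\q_i\in\Z^m$, and restrict $A$ to the set where $\|A\q_i-A_k\q_i\|$ is tiny for $i=1,\dots,r$. By the product computation \eqref{eq:computatin of product} this makes $g_{t_k}u_A\Z^d$ contain $r$ linearly independent vectors of length $<1/k$ at a time $t_k\to\infty$, so $\inf_t\la_r(g_tu_A\Z^d)=0$ for every $A\in K$ and $K$ misses $\BA_r(m,n)$. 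Two auxiliary points are needed: the $(\p_i,\q_i)$ can be arranged linearly independent in $\Z^d$ --- for $r\le n$ by taking the $\q_i$ in distinct coordinate directions, and for $r>n$ (which forces $m\ge2$) by using a nontrivial relation among the $\q_i$ together with a non-vanishing integer combination of the corresponding $\p_i$; and, crucially, the $r$ new short vectors must not pull $\la_{r+1}$ below $\de$, which one secures by bounding $\la_{r+1}(g_{t_k}u_A\Z^d)$ from below via Minkowski's second theorem in terms of the uniformly controlled covolume of the rank-$r$ sublattice they span.

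The step I expect to carry essentially all the difficulty is the dimension bookkeeping at the forcing scales. A single forcing move confines $r$ of the ``column directions'' of $A$ (in fact all of $A$ once $r\ge n$) to a ball of radius of order $\e_k^{(m+n)/m}$: the constraint $\|A\q_i-A_k\q_i\|<\tfrac1k e^{-t_k/m}$ is rigid, and for an admissible rational $A_k$ to lie inside a box of side $\e_k$ the forcing time is pushed up to $t_k\gtrsim n\log(1/\e_k)$, so the restricted region is much thinner than the box. Read off naively, this collapses the local exponent over the affected range of scales and only delivers the lower bound $mn-\tfrac{mnr}{m+n}$; since forcing must recur infinitely often, the $\liminf$ defining $\dim_H$ is pulled down to that value. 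To reach the full value $mn$ one must show the loss is asymptotically negligible: make the forcing scales so sparse that each forcing range is dwarfed by the full-dimensional ``good'' range immediately preceding it, and --- more essentially --- choose the $\q_i$ of maximal admissible length and spread $\mu$ over as many admissible restricted subboxes as the geometry permits, so that the lower pointwise dimension of $\mu$ is perturbed by only $o(1)$ at each forcing. Making this quantitative --- balancing the thinness of the restricted region, the number of admissible children, and the growth of $t_k$ --- is where the content lies and the part I would expect to need an idea beyond the plain forcing; once it is in place the construction of $\mu$ and the estimate $\dim_H K\ge mn-\eta$ are routine, and the packing-dimension statement requires no separate argument.
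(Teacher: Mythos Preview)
Your approach --- a hand-built Cantor set with Schmidt-game moves interspersed with sparse forcing --- is genuinely different from the paper's. The paper works through the variational principle (Theorem~\ref{variational principle version 2}): taking $S$ to be the class of functions with $\liminf_{t\to\infty} g_i(t)=-\infty$ for $i\le r$ and $\liminf_{t\to\infty} g_i(t)>-\infty$ for $i\ge r+1$, one has $\D(S)=\BA_{r+1}(m,n)-\BA_r(m,n)$, and it suffices to exhibit a single template $\f\in S\cap\mathcal T_{m,n}$ with $\underline{\delta}(\f)=mn$. The template chosen is exactly your sparse-forcing idea translated into the template language: it equals the zero template (contraction rate $mn$, Example~\ref{zerotpl}) on $[b_{k-1},b_k-k]$ with $b_k=k^k$, and on each short window $[b_k-k,b_k]$ it carries a quadrilateral pulse driving $f_1=\cdots=f_r$ downward while $f_{r+1},\dots,f_d$ stay bounded. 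Since the pulses occupy a vanishing proportion of $[0,T]$, the time average $\underline{\delta}(\f)$ equals $mn$. The paper's actual work is purely combinatorial --- verifying the quantized-slope axiom (III) for the pulse --- done for $r\le\max(m,n)$ via an elementary ceiling inequality (Lemma~\ref{lma:elem}) and for $r>\max(m,n)$ by passing to the dual lattice and using $\lambda_r(\Lambda)\lambda_{d+1-r}(\Lambda^*)\asymp 1$. What the variational principle buys is that both difficulties you flag --- the dimension bookkeeping at forcing scales, and keeping $h_{r+1}$ bounded below while $h_r$ dives --- are absorbed into the black box: one simply \emph{prescribes} $f_{r+1}$ bounded in the template, and Theorem~\ref{2.3} (which underlies the variational principle) supplies the matrices.

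Your sketch has a concrete gap at the second of these points. You claim $\lambda_{r+1}$ stays above $\delta$ at the forcing time ``via Minkowski's second theorem in terms of the uniformly controlled covolume of the rank-$r$ sublattice'' spanned by the forced vectors. But Minkowski only gives $\lambda_1\cdots\lambda_d\asymp 1$, so forcing $\lambda_1,\dots,\lambda_r$ small makes the product $\lambda_{r+1}\cdots\lambda_d$ large; it does \emph{not} prevent $\lambda_{r+1}$ itself from being small, compensated by a large $\lambda_d$. Ruling out an $(r+1)$-st short vector requires arranging that the primitive rank-$r$ sublattice you force has no short transversal in $g_{t_k}u_A\Z^d$, a genuinely nontrivial constraint that interacts with your game moves at neighbouring scales and is not delivered by Minkowski. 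Relatedly, the adaptation of the winning-set machinery from $\lambda_1$ to $\lambda_{r+1}$ is asserted but not explained: Schmidt's strategy for $\BA_1$ avoids \emph{all} short vectors, and it is unclear how Alice is to avoid only an $(r+1)$-st short vector once $r$ short vectors have been imposed on her by the forcing. These are precisely the issues the template formalism is designed to dissolve, which is why the paper's route is so much shorter than the one you outline.
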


\begin{remark}
    As of now, we are not able to obtain an explicit formula for the Hausdorff and packing dimensions for the singular matrices of order $r$. This is a work in progress.
\end{remark}

\newcommand\locally{\mathrel{\overset{\makebox[0pt]{\mbox{\normalfont\tiny\sffamily locally}}}{=}}}

\section{Variational principles and the proof of the main results}

\subsection{Templates}

Recall the definition of successive minina (Definition \ref{sm}). The key idea of variational principles is to approximate successive minina function $\h$ by piecewise linear functions called templates, define appropriate averaging quantities for templates and study the relation between such quantities and the fractional dimension.

\begin{definition}[\cite{DFSU20} \label{templates}]
An $m \times n$ \textit{template} is a piecewise linear map $\f : [0,\infty) \to \R^d$ with the
following properties:
 \renewcommand{\labelenumi}{(\Roman{enumi})}
\begin{enumerate}
    \item $f_1 \le \cdots \le f_d$.
    \item $-\frac{1}{n} \le f_i' \le \frac{1}{m}$ for all $i$.
    \item For all $j = 0, \dots , d$ and for every interval $I$ such that $f_{j} < f_{j+1}$ on $I$, the function

$F_j:=\sum_{0<i \le j} f_i$ is convex and piecewise linear on $I$ with slopes in
\begin{equation}
    Z(j):=\{\frac{L_+}{m}- \frac{L_-}{n}:L_{\pm}\in [0,d_{\pm}]_{\Z}, L_+ + L_- =j \}
\end{equation}

\end{enumerate}

  Here $d_+:=m, d_-:=n, [a,b]_{\Z}:=[a,b]\cap \Z$.
  
  We use the convention that $f_0 = -\infty$ and $f_{d+1} = +\infty$. We will call the assertion that $F_j$ is convex
the \textit{convexity condition}, and the assertion that its slopes are in $Z(j)$ the \textit{quantized
slope condition}. We denote the space of $m\times n$ templates by $\mathcal{T}_{m, n}$

Observe that $F_d=f_1 +... +f_d$ is always a constant due to the property (III) above. A template $\f$ is called \textit{balanced} if $F_d=0$. A \textit{partial template} is a piecewise linear map $f$ satisfying (I)-(III) whose
domain is a closed, possibly infinite, subinterval of $[0,\infty)$.
\end{definition}

The fundamental relation between templates and successive minima functions is given
as follows:

\begin{theorem}[\cite{DFSU20}] \label{2.3} \leavevmode
 \renewcommand{\labelenumi}{(\roman{enumi})}
\begin{enumerate}
    \item For every $m \times n$ matrix $A$, there exists an $m \times n$ template $\f$ such that $\h_A \asymp_+  \f$.
    \item For every $m \times n$ matrix $A$, there exists an $m \times n$ template $\f$ such that $\h_A \asymp_+ \f$.
\end{enumerate}
 
\end{theorem}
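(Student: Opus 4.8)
The plan is to prove the two directions of the correspondence separately: that every successive minima function is, up to bounded additive error, an honest template, and conversely that every template is tracked with bounded error by some $\h_A$. Both rest on the exterior-power description of successive minima together with Minkowski's second theorem.

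For the direction producing a template from a matrix, I would first observe that conditions (I) and (II) are essentially automatic. The ordering $\lambda_1 \le \cdots \le \lambda_d$ gives (I), and since $g_t = \diag(e^{t/m}I_m, e^{-t/n}I_n)$ stretches every vector by a factor between $e^{-t/n}$ and $e^{t/m}$, each map $t \mapsto \log \lambda_i(g_t u_A \Z^d)$ is Lipschitz with one-sided slopes in $[-1/n,\,1/m]$ --- exactly in the a.e./coarse sense that forces the conclusion to be stated with $\asymp_+$ rather than with equality. The substance is condition (III). Here I would use that $\prod_{i=1}^j \lambda_i(\Lambda)$ is comparable, with constants depending only on $d$, to the minimal covolume of a primitive rank-$j$ subgroup $\Delta \le \Lambda$: this is Minkowski's second theorem for $j=d$ and its standard sublattice refinement for $j<d$. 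Thus $F_j(t) = \log(\lambda_1 \cdots \lambda_j)(g_t u_A \Z^d) \asymp_+ \log \min_{\Delta} \operatorname{covol}(g_t u_A \Delta)$, where $\Delta$ ranges over primitive rank-$j$ subgroups of $\Z^d$.

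Now $\operatorname{covol}(g_t u_A \Delta) = \| g_t^{\wedge j}(u_A^{\wedge j} w_\Delta) \|$, where $w_\Delta \in \wedge^j \Z^d$ is the Plücker vector of $\Delta$. The diagonal flow acts on $\wedge^j \R^d$ preserving the weight decomposition whose weights are precisely the numbers $\tfrac{L_+}{m} - \tfrac{L_-}{n}$ with $L_+ + L_- = j$ and $0 \le L_\pm \le d_\pm$, i.e. the set $Z(j)$. Hence for each fixed nonzero $v$, the function $t \mapsto \log \| g_t^{\wedge j} v \|$ is convex, piecewise linear, with all slopes in $Z(j)$. On an interval where $f_j < f_{j+1}$, a stability argument --- the gap between the $j$-th and $(j+1)$-st minima pins down the minimizing $\Delta$ up to the action of a finite index subgroup --- lets me take the minimizing subgroup constant, so there $F_j$ agrees up to bounded error with one such convex piecewise-linear function, giving the convexity and quantized-slope conditions. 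It then remains to invoke the elementary fact that any map $[0,\infty) \to \R^d$ satisfying (I)--(III) up to bounded additive error lies within bounded distance of an actual template; smoothing corners and rounding slopes into $Z(j)$ produces the desired $\f$.

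For the converse --- realizing a given template $\f$ by some $\h_A$ --- I would construct $A$ by an inductive, multi-scale procedure. Each corner of each partial sum $F_j$ dictates a time at which a new primitive rank-$j$ subgroup of $g_t u_A \Z^d$ should overtake the previous one and attain a prescribed covolume; reading $\f$ from left to right, one chooses integer vectors (equivalently, successively refines a rational approximation of $A$) so that at these prescribed times the relevant subgroups have the prescribed covolumes, the quantized-slope condition ensuring that the demanded growth and decay rates are actually attainable under the $g_t$-action. The delicate point, and the main obstacle, is to carry this construction through infinitely many scales while controlling all $d$ successive minima simultaneously with error bounds that do not accumulate; this shadowing lemma is the technical heart of \cite{DFSU20}, whereas the first direction is essentially bookkeeping on top of Minkowski's second theorem and the exterior-power representation.
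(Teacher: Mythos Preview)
The paper does not actually prove this theorem: it is quoted verbatim from \cite{DFSU20} and used as a black box, so there is no ``paper's own proof'' to compare your proposal against. (Incidentally, the statement as printed contains a typo: parts (i) and (ii) are identical. From the surrounding text---``Theorem \ref{2.3}(ii) asserts that for every template $\f$, the set $\mathcal{D}(\f)$ is nonempty''---part (ii) should read: for every $m\times n$ template $\f$, there exists an $m\times n$ matrix $A$ such that $\h_A\asymp_+\f$.)

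That said, your sketch is a reasonable outline of the argument in \cite{DFSU20}. For direction (i) you correctly identify the exterior-power interpretation of $F_j$ via Pl\"ucker coordinates and the weight decomposition of $g_t^{\wedge j}$ as the source of the quantized-slope condition, and you are right that a smoothing/rounding step is needed at the end to pass from ``template up to $O(1)$'' to an actual template. For direction (ii) you correctly flag the multi-scale shadowing construction as the substantial part; in \cite{DFSU20} this is carried out not by directly building $A$ but via an intermediate game-theoretic framework that produces a set of matrices of the right dimension all of whose successive minima functions track $\f$. Your description of ``choosing integer vectors so that prescribed subgroups have prescribed covolumes at prescribed times'' captures the spirit but understates the difficulty: the actual mechanism in \cite{DFSU20} is considerably more elaborate and occupies the bulk of that paper.
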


Theorem \ref{2.3}(ii) asserts that for every template f, the set 
$\mathcal{D}(f) := \{A : \h_A \asymp_+ f \}$
is nonempty. It is natural to ask how big this set is in terms of Hausdorff and packing
dimensions. Moreover, given a collection of templates $\mathcal{F}$, we can ask the same question about the set 
$$\mathcal{D}(\mathcal{F}):=\cup_{\f\in \mathcal{F}} D(f).$$

\subsection{The lower and upper contraction rates}
The next important notion we need to introduce for the statement of the variational principal is the lower and upper average contraction rate of a template.

We define the lower and upper average contraction rate of a template $\f$ as follows. Let $I$ be an open interval on which $\f$ is linear. For each $q = 1, \dots , d$ such that $f_q < f_{q+1}$ on $I$, let $L_{\pm}=L_{\pm}(\f,I,q)\in [0,d_{\pm}]_{\Z}$ be chosen to satisfy $L_+ + L_- =q$ and 
\begin{equation}
F_q'=\sum_{i=1^q}f_i'=\frac{L_+}{m}-\frac{L_-}{m},
\end{equation}

as guaranteed by (III) of the definition of templates. An interval of equality for $f$ on $I$ is an interval $(p, q]_{\Z}$, where $0 \le p < q \le d$ satisfy 
\begin{equation}
f_p<f_{p+1}=\cdots=f_q<f_{q+1}~\text{on}~I,
\end{equation}

As before, we use the convention that $f_0=-\infty$ and $f_{d+1}=\infty$. Note that the collection of intervals of equality forms a partition of $[1,d]_{\Z}$. If $(p,q]_{\Z}$ is an interval of equality for $\f$ on $I$, then we let $M_{\pm}(p,q)=M_{\pm}(\f, I, p,q)$, where 
\begin{equation}
M_{\pm}(\f, I, p, q)=L_{\pm}(\f,I,q)-L_{\pm}(\f,I,p),
\end{equation}
or equivalently, $M_{\pm}(p, q)$ are the unique integers such that
\begin{equation}
M_{+}+M_-=q-p ~\text{and}~ 
\sum_{i=p+1}^{q}f_i'=\frac{M_+}{m}-\frac{M_-}{n} ~\text{on}~I.
\end{equation}
It can be shown from the definition of template that $M_{\pm}\ge 0$ by (II) of the definition of templates. Next, let
\begin{equation}
S_+=S_+(\f, I)=\cup_{(p,q]_{\Z}}(p,p+M_+(p,q)]_{\Z}
\end{equation} 
\begin{equation}
S_-=S_-(\f, I)=\cup_{(p,q]_{\Z}}(p+M_+(p,q),q]_{\Z}
\end{equation} 

where the unions are taken over all intervals of equality for $\f$ on $I$. Note that $S_+$ and $S_-$
are disjoint and satisfy 
$S_+ \cup S_- = [1, d]_{\Z}$, and that $\#(S+) = m$ and $\#(S_-) = n$. 

Next, let
\begin{equation}
\delta(\f,I)=\# \{(i_+,i_-)\in S_+\times S_-:i_+< i_-\}\in [0,mn]_{\Z}, \label{counting}
\end{equation}

and note that 
\begin{equation}
mn-\delta(\f,I)=\#\{(i_+,i_-)\in S_+\times S_-:i_+ > i_-\}
\end{equation}

\begin{definition}\label{contraction rates}
The \textit{lower and upper average contraction rates} of $\f$ are the numbers 

\begin{equation}
\underline{\delta}(\f):=\liminf_{T \to \infty} \Delta(\f,T),
\end{equation}
and 
\begin{equation}
\overline{\delta}(\f):=\limsup_{T \to \infty} \Delta(\f,T),
\end{equation} 
where $\Delta(\f,T):=\frac{1}{T}\int_0^T \delta(\f,t)dt$. Here we abuse notation by writing $\delta(\f, t) = \delta(\f, I)$ for all $t \in I$.
\end{definition}

To help illustrate the definitions above, let us introduce the following example

\begin{example}[The contraction rate of zero template is $mn$ \label{zerotpl}]
    Let $\f$ be the template where all of its components are zero, then the interval of linearity is $[0,\R)$ and the only interval of equality is $(0,d]_{\Z}$.
    
\begin{enumerate}
    \item For $q=0$
    \begin{equation*}
    \begin{cases}
    L_+ +L_-=0\\
    \frac{L_+}{m}-\frac{L_-}{n}=0
    \end{cases}\iff 
    \begin{cases}
    L_+=0\\
    L_-=0
    \end{cases}.
\end{equation*}

\item For $q=d$
    \begin{equation*}
    \begin{cases}
    L_+ +L_-=F_d'=0\\
    \frac{L_+}{m}-\frac{L_-}{n}=0
    \end{cases}\iff 
    \begin{cases}
    L_+=m\\
    L_-=n
    \end{cases}.
\end{equation*}
Hence,
$$M_+(0,d)=M_{+}(\f,[0,\R),0,d):=L_+(d)-L_+(0)=m$$
and
$$S_+=\cup_{(p,q]_{\Z}}(p,p+M(p,q)]=(0,m]_{\Z}$$
and
$$S_-=[1,d]_{\Z}-S_+=(m+1,d].$$
It follows that
\begin{equation*}
\delta(\f,I)=\# \{(i_+,i_-)\in S_+\times S_-:i_+< i_-\}=mn.
\label{counting}
\end{equation*}
\end{enumerate}
Hence
$$\de(\f)=\lim_{T\to \infty}\frac{1}{T}\int_0^T \delta(\f,t)dt\equiv mn. $$
\end{example}

\begin{example}[The contraction rate of standard quadrilateral partial template of order $r$ \label{quadtpl}]

    For $r\le \min(m,n)$, let $\f$ be the template with
    $$f_{1}=\cdots=f_r<f_{r+1}=\cdots=f_d,$$    
    where each of the components is a piecewise linear function with two pieces defined on $I_1:=[0,\frac{m}{m+n}]$ and $I_2:=[\frac{m}{m+n},1]$ (intervals of linearity). For $f_{1}=\cdots=f_r$ the derivative (slope) on the first interval is $-\frac{1}{n}$ and the derivative on the second interval is $\frac{1}{m}$; for $f_{r+1}=\cdots=f_d$ the derivative (slope) on the first interval is $-\frac{r}{(d-r)n}$ and the derivative on the second interval is $-\frac{r}{(d-r)m}$.
    
    There are two intervals of equality: $(0,r]_{\Z}$ and $(r+1,d]_{\Z}$.

\begin{figure}[h]
\begin{center}
\psset{unit=1.5cm}
    \begin{pspicture}[showgrid=false](0,-0.75)(4,2)
    \psaxes[yAxis=false, ticks=none, labels=none, linecolor=black]{->}(0,0)(-1,-0.5)(6,1.5)[$t$,0][,-10]
    \psset{algebraic,linewidth=1.5pt,linecolor=blue}
    \pscustom
    {
        \psplot{0}{3}{-x/3}
        \psplot{3}{5}{(x-5)/2}{$f_1$}   
        \rput(1,1){$y=\log x$}
    }
    \psset{algebraic,linewidth=1.5pt,linecolor=red}
    {
        \psplot{0}{3}{x/6}
        \psplot{3}{5}{-(x-5)/4}
    }
        \pcline[linestyle=none](2,3)(0,0)\naput{$f_{r+1}=\cdots=f_d$}
        \pcline[linestyle=none](2,1.7)(0,0)\naput{$\frac{r}{(d-r)n}$}
        \pcline[linestyle=none](2,-2)(2,0.25)\naput{-$\frac{1}{n}$}
        \pcline[linestyle=none](6,0)(2,-1)\naput{$\frac{1}{m}$}
        \pcline[linestyle=none](6,1)(2,0.75)\naput{$-\frac{r}{(d-r)m}$}
        \pcline[linestyle=none](5,-2.5)(0,0)\naput{$f_{1}=\cdots=f_r$}

\end{pspicture}
\end{center}
    \vspace{10mm}
    \caption{Construction of the standard partial template of order $r$.}
    \label{fig:pqtpl}
\end{figure}
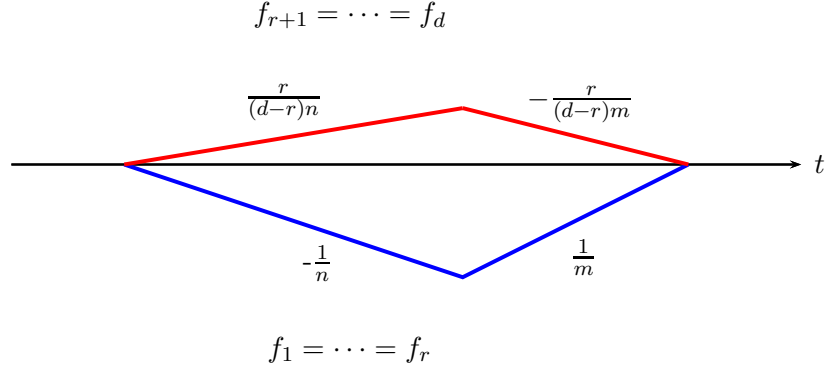

\end{example}

\begin{enumerate}
    \item Over the interval $I_1$:
\begin{enumerate}
    \item For $q=0$
    \begin{equation*}
    \begin{cases}
    L_+ +L_-=0\\
    \frac{L_+}{m}-\frac{L_-}{n}=0
    \end{cases}\iff 
    \begin{cases}
    L_+=0\\
    L_-=0
    \end{cases}.
\end{equation*}

\item For $q=r$
    \begin{equation*}
    \begin{cases}
    L_+ +L_-=r\\
    \frac{L_+}{m}-\frac{L_-}{n}=F_r'=\sum_{i=1}^r f_i'=-\frac{r}{n}
    \end{cases}\iff 
    \begin{cases}
    L_+=0\\
    L_-=r
    \end{cases}.
    \end{equation*}
\item For $q=d$
    \begin{equation*}
    \begin{cases}
    L_+ +L_-=d\\
    \frac{L_+}{m}-\frac{L_-}{n}=0
    \end{cases}\iff 
    \begin{cases}
    L_+=m\\
    L_-=n
    \end{cases}.
    \end{equation*}
Hence,
\begin{align*}
    &M_+(0,r)=M_{+}(\f,[0,\R),0,r):=L_+(r)-L_+(0)=0;\\
    &M_+(r,d)=M_{+}(\f,[0,\R),r,d):=L_+(d)-L_+(r)=m.
\end{align*}
and
$$S_+=\cup_{(p,q]_{\Z}}(p,p+M(p,q)]=(r,m+r]_{\Z}$$
and
$$S_-=[1,d]_{\Z}-S_+=(0,r]_{\Z}\cup(m+r+1,d]_{\Z}.$$
It follows that
\begin{equation*}
\delta(\f,I_1)=\# \{(i_+,i_-)\in S_+\times S_-:i_+< i_-\}=m(n-r).
\label{counting}
\end{equation*}
\end{enumerate}

    \item Over the interval $I_2$:
\begin{enumerate}
    \item For $q=0$
    \begin{equation*}
    \begin{cases}
    L_+ +L_-=0\\
    \frac{L_+}{m}-\frac{L_-}{n}=0
    \end{cases}\iff 
    \begin{cases}
    L_+=0\\
    L_-=0
    \end{cases}.
\end{equation*}

\item For $q=r$
    \begin{equation*}
    \begin{cases}
    L_+ +L_-=r\\
    \frac{L_+}{m}-\frac{L_-}{n}=F_r'=\sum_{i=1}^r f_i'=+\frac{r}{m}
    \end{cases}\iff 
    \begin{cases}
    L_+=r\\
    L_-=0
    \end{cases}.
    \end{equation*}
\item For $q=d$
    \begin{equation*}
    \begin{cases}
    L_+ +L_-=F_d'=d\\
    \frac{L_+}{m}-\frac{L_-}{n}=0
    \end{cases}\iff 
    \begin{cases}
    L_+=m\\
    L_-=n
    \end{cases}.
    \end{equation*}
Hence,
\begin{align*}
    &M_+(0,r)=M_{+}(\f,0,r):=L_+(r)-L_+(0)=r;\\
    &M_+(r,d)=M_{+}(\f,r,d):=L_+(d)-L_+(r)=m-r.
\end{align*}
and
\begin{align*}
&S_+=\cup_{(p,q]_{\Z}}(p,p+M(p,q)]=(0,r]_{\Z}\cap(r,m]_{\Z}=(0,m]_{\Z}\\
&S_-=[1,d]_{\Z}-S_+=(m+1,d]_{\Z}.
\end{align*}

It follows that
\begin{equation*}
\delta(\f,I_2)=\# \{(i_+,i_-)\in S_+\times S_-:i_+< i_-\}=mn.
\label{counting}
\end{equation*}
\end{enumerate}
\end{enumerate}
Therefore,
$$\int_0^1 \delta(\f,t) dt= \frac{n}{m+n}m(n-r)+\frac{m}{m+n}mn=mn-\frac{mnr}{m+n}.$$

Note that when $r=1$, this is equal to $mn-\frac{mn}{m+n}$. This example plays a central role in the computation of the Hausdorff dimension of singular matrices.

Note that if $r> \min(m,n)$, then the derivatives (slopes) will violate the axiom $-\frac{1}{n}\le f_i' \le \frac{1}{m}, i=1,2,\cdots,d$ for templates.

\subsection{The variational principles for templates}

\begin{definition}
A collection of templates $\mathcal{F}$ is said to be \textit{closed under finite perturbations} if whenever $g \asymp_+ \f \in \mathcal{F}$, we have $g \in F$.
\end{definition}

\begin{theorem}[Variational principle: version 1 \cite{DFSU20}]
Let $\mathcal F$ be a collection of templates
closed under finite perturbations. Then
\begin{equation}
    \dim_H(\mathcal{D}(\mathcal F))=\sup_{\f \in \mathcal F} \underline{\delta}(\f)
\end{equation}

and
\begin{equation}
           \dim_P(\mathcal{D}(\mathcal F))=\sup_{\f \in \mathcal F} \overline{\delta}(\f)
\end{equation}
\end{theorem}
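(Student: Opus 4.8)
The plan is to prove the two equalities by establishing matching upper and lower bounds for each of $\dim_H(\mathcal D(\mathcal F))$ and $\dim_P(\mathcal D(\mathcal F))$, reducing everything to the single template case via the behavior of Hausdorff/packing dimension under countable unions. First I would record the reduction: since $\mathcal D(\mathcal F) = \bigcup_{\f \in \mathcal F} \mathcal D(\f)$, and since both dimensions are countably stable (Hausdorff) or at least monotone (and since $\mathcal F$, being closed under finite perturbations, can be replaced by a countable subfamily realizing the supremum up to $\varepsilon$ — here one uses that templates are determined by countably much data: rational breakpoints and rational slopes approximate arbitrary templates with only a bounded additive error, which does not change $\mathcal D(\cdot)$), it suffices to prove, for each individual template $\f$, that
\begin{equation*}
\dim_H(\mathcal D(\f)) = \underline\delta(\f), \qquad \dim_P(\mathcal D(\f)) = \overline\delta(\f).
\end{equation*}
For packing dimension one additionally needs $\dim_P$ of a countable union to equal the supremum, which is true; for the sup over the whole family to be attained in the limit one passes to the countable dense subfamily.

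The second step is the \emph{upper bound} $\dim_H(\mathcal D(\f)) \le \underline\delta(\f)$ and $\dim_P(\mathcal D(\f)) \le \overline\delta(\f)$. The idea is a covering argument driven by the geometry of the $g_t$-flow: for a matrix $A$ with $\h_A \asymp_+ \f$, the lattice $g_t u_A \Z^d$ has its successive minima controlled by $\f(t)$, and the set of $A' $ whose orbit stays $O(1)$-close to that of $A$ up to time $T$ is contained in a union of boxes whose eccentricities are governed by the expansion/contraction rates of $g_T$ restricted to the relevant coordinate subspaces — exactly the quantity $\delta(\f,t)$ integrated up to $T$, which counts the pairs $(i_+,i_-) \in S_+ \times S_-$ with $i_+ < i_-$ (these index the directions in which the horospherical coordinate is contracted). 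Covering $\mathcal D(\f)$ at scale $e^{-T/?}$ by such boxes and summing $(\operatorname{diam})^s$ shows the sum stays bounded when $s > \Delta(\f,T) + \varepsilon$ for suitable $T$; taking $T \to \infty$ along a sequence realizing $\liminf$ (resp. $\limsup$) gives the Hausdorff (resp. packing) bound. This is where one invokes Theorem \ref{2.3} to know $\mathcal D(\f)$ is what we want to cover, plus the elementary fact that $\#(S_+) = m$, $\#(S_-) = n$.

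The third and main step is the \emph{lower bound}: constructing a subset of $\mathcal D(\f)$ of dimension at least $\underline\delta(\f)$ (resp. a set witnessing packing dimension $\overline\delta(\f)$). Here I would build a Cantor-like set of matrices by a nested sequence of boxes, at each stage prescribing finitely many digits of $A$ so as to force the orbit $g_t u_A \Z^d$ to shadow the template $\f$; the number of admissible children of each box and their diameters are again controlled by $\delta(\f, \cdot)$, and a mass-distribution / Frostman argument on the resulting measure yields the dimension lower bound. The technical heart is verifying that the shadowing is genuinely realizable — i.e. that one can always choose the next block of digits of $A$ consistently with \emph{both} the convexity condition and the quantized slope condition of the template, which is precisely the content that makes templates (rather than arbitrary piecewise-linear paths) the right object; this is the step I expect to be the main obstacle, and it is where the full strength of the construction in \cite{DFSU20} is needed. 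The $\liminf$/$\limsup$ distinction enters through whether one controls the growth uniformly for all large $T$ (packing) or only along a sparse sequence of times (Hausdorff), mirroring the standard dichotomy in the dimension theory of dynamically defined Cantor sets. Finally, combining the matching bounds for every $\f$ and taking suprema over the (countable, then full) family $\mathcal F$ completes the proof.
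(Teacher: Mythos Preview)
The paper does not prove this theorem at all: it is quoted verbatim as a result of Das--Fishman--Simmons--Urba\'nski \cite{DFSU20} and used as a black box in the subsequent dimension computations. So there is no ``paper's own proof'' to compare your proposal against; your outline is a plausible high-level sketch of the strategy actually carried out in \cite{DFSU20}, but nothing in the present paper corroborates or contradicts the details.

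That said, one step in your reduction deserves a warning. You write that $\mathcal F$, being closed under finite perturbations, ``can be replaced by a countable subfamily realizing the supremum up to $\varepsilon$'' and invoke countable stability of $\dim_H$ and $\dim_P$. This is fine for the \emph{lower} bound on $\dim_H(\mathcal D(\mathcal F))$ and $\dim_P(\mathcal D(\mathcal F))$, since a single template already suffices. But for the \emph{upper} bound you cannot in general write $\mathcal D(\mathcal F)$ as a countable union of the sets $\mathcal D(\f)$: the family $\mathcal F$ is typically uncountable, and countable stability of dimension does not apply. The actual argument in \cite{DFSU20} handles the upper bound for $\mathcal D(\mathcal F)$ directly (via a covering by ``template classes'' at each scale, not by first decomposing into individual templates), and this is not a cosmetic difference --- your reduction as stated would leave a genuine gap in the upper-bound direction.
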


\begin{theorem}[Variational principle, version 2 \cite{DFSU20}]\label{variational principle version 2}. Let S be a collection of Borel functions from $[0,\infty)$ to $\R^d$ which is closed under finite perturbations, and let $\D(S):=\{A:\h_A \in S\}$. Then
\begin{equation}
    \dim_H(\mathcal{D}(S))=\sup_{\textbf{f} \in S\cap \mathcal{T}_{m,n}} \underline{\delta}(\f)
\end{equation}

and 
\begin{equation}
        \dim_P(\mathcal{D}(S))=\sup_{\textbf{f} \in S\cap \mathcal{T}_{m,n}} \overline{\delta}(\f)
\end{equation}

\end{theorem}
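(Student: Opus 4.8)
The plan is to reduce Version~2 to Version~1 of the variational principle, using Theorem~\ref{2.3}(i) as the bridge between arbitrary Borel functions and templates. The point is that the ``template part'' of $S$ already captures all the fractional-dimensional information in $\mathcal{D}(S)$, because every successive minima function $\h_A$ is a finite perturbation of some template.

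First I would introduce the candidate family of templates $\mathcal{F}:=S\cap\mathcal{T}_{m,n}$ and check that it is closed under finite perturbations in the sense required by Version~1. Here I would record that $\asymp_+$ is an equivalence relation on Borel maps $[0,\infty)\to\R^d$ (symmetry and transitivity being immediate from boundedness of $|f-g|$), so that being ``closed under finite perturbations'' amounts to being a union of $\asymp_+$-classes within the relevant ambient space. Then, if $g\in\mathcal{T}_{m,n}$ satisfies $g\asymp_+\f$ for some $\f\in\mathcal{F}\subseteq S$, closure of $S$ forces $g\in S$, hence $g\in S\cap\mathcal{T}_{m,n}=\mathcal{F}$; so Version~1 is applicable to $\mathcal{F}$.

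The key step is the set identity $\mathcal{D}(S)=\mathcal{D}(\mathcal{F})$, with $\mathcal{D}(\mathcal{F})=\bigcup_{\f\in\mathcal{F}}\mathcal{D}(\f)$ and $\mathcal{D}(\f)=\{A:\h_A\asymp_+\f\}$. The inclusion $\mathcal{D}(\mathcal{F})\subseteq\mathcal{D}(S)$ is immediate: if $\h_A\asymp_+\f\in\mathcal{F}\subseteq S$ then $\h_A\in S$ by closure of $S$. For $\mathcal{D}(S)\subseteq\mathcal{D}(\mathcal{F})$, take $A$ with $\h_A\in S$; by Theorem~\ref{2.3}(i) there is a template $\f$ with $\h_A\asymp_+\f$, and closure of $S$ then gives $\f\in S$, so $\f\in\mathcal{F}$ and $A\in\mathcal{D}(\f)\subseteq\mathcal{D}(\mathcal{F})$. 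Combining this identity with Version~1 applied to $\mathcal{F}$ yields
$$\dim_H(\mathcal{D}(S))=\dim_H(\mathcal{D}(\mathcal{F}))=\sup_{\f\in\mathcal{F}}\underline{\delta}(\f)=\sup_{\f\in S\cap\mathcal{T}_{m,n}}\underline{\delta}(\f),$$
and the same chain with $\overline{\delta}$ and $\dim_P$ in place of $\underline{\delta}$ and $\dim_H$.

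Since Version~1 and Theorem~\ref{2.3} are available, there is no analytic difficulty left; the argument is a bookkeeping reduction. The one place where care is genuinely needed is the use of the hypothesis that $S$ is closed under finite perturbations: it enters twice (to inherit closure for $\mathcal{F}$, and to pull the approximating template of $\h_A$ back into $S$), and without it one would only obtain $\mathcal{D}(\mathcal{F})\subseteq\mathcal{D}(S)$, hence just the inequalities $\dim_H(\mathcal{D}(S))\ge\sup_{\f\in S\cap\mathcal{T}_{m,n}}\underline{\delta}(\f)$ and its packing analogue. The degenerate case $S\cap\mathcal{T}_{m,n}=\varnothing$ (which by the identity forces $\mathcal{D}(S)=\varnothing$ and makes both sides $0$ under the convention $\sup\varnothing=0$) should be noted for completeness.
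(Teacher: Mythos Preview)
Your reduction of Version~2 to Version~1 via Theorem~\ref{2.3}(i) is correct and is the natural argument: the two inclusions $\mathcal{D}(S)=\mathcal{D}(S\cap\mathcal{T}_{m,n})$ go through exactly as you describe, and the closure of $S\cap\mathcal{T}_{m,n}$ under finite perturbations (within templates) follows immediately from closure of $S$. Note, however, that the present paper does not supply its own proof of this statement at all; Versions~1,~2, and~3 of the variational principle are all quoted from \cite{DFSU20} as black boxes, so there is no ``paper's proof'' to compare against. Your argument is precisely the expected derivation of Version~2 from Version~1 and Theorem~\ref{2.3}, and nothing more is needed.
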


\begin{theorem}[Variational principle, version 3 \cite{DFSU20}] 

\leavevmode
 \renewcommand{\labelenumi}{(\roman{enumi})}
\begin{enumerate}
    \item Let S be a (Borel) set of $m \times n$ matrices of Hausdorff (resp. packing) dimension $>\de$.
Then there exist a matrix $A \in S$ and a template $\f \asymp_+ \h_A$ whose lower (resp. upper)
average contraction rate is $> \de$.

    \item Let $\f$  be a template whose lower (resp. upper) average contraction rate is $> \de$. Then
there exists a (Borel) set S of m×n matrices of Hausdorff (resp. packing) dimension $>\de$, such that $\h_A \asymp_+ \f$ for all $A \in S$.
\end{enumerate}

\end{theorem}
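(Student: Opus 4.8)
The plan is to derive Version~3 as a formal consequence of Version~2 of the variational principle (Theorem~\ref{variational principle version 2}); no new geometry is involved, only a bookkeeping argument that converts the global supremum over a perturbation-closed family into the two one-sided existence statements. Throughout I would use the elementary fact that $\asymp_+$ is an equivalence relation on functions $[0,\infty)\to\R^d$ (transitivity because the sum of two bounded functions is bounded), so that the set $\mathcal{D}(\g):=\{A:\h_A\asymp_+\g\}$ depends only on the $\asymp_+$-class of $\g$.

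For part (i), starting from a Borel set $S$ of $m\times n$ matrices with $\dim_H(S)>\de$ (the packing case being word-for-word the same with $\dim_P,\overline{\delta}$ in place of $\dim_H,\underline{\delta}$), I would form the collection of Borel functions
$$\mathcal{C}:=\{\,g:[0,\infty)\to\R^d \text{ Borel} : g\asymp_+\h_A \text{ for some } A\in S\,\}.$$
It is closed under finite perturbations, since $g'\asymp_+ g\in\mathcal{C}$ forces $g'\asymp_+\h_A$ for the same $A$; and $\mathcal{D}(\mathcal{C})=\{A:\h_A\in\mathcal{C}\}\supseteq S$ because $\h_A\asymp_+\h_A$. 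Hausdorff dimension and its monotonicity make sense for arbitrary subsets of $\R^{m\times n}$, so the possible non-Borelness of $\mathcal{D}(\mathcal{C})$ is harmless, and Theorem~\ref{variational principle version 2} gives
$$\sup_{\f\in\mathcal{C}\cap\mathcal{T}_{m,n}}\underline{\delta}(\f)=\dim_H(\mathcal{D}(\mathcal{C}))\ \ge\ \dim_H(S)\ >\ \de .$$
Hence some template $\f\in\mathcal{C}$ has $\underline{\delta}(\f)>\de$, and the membership $\f\in\mathcal{C}$ means exactly that $\f\asymp_+\h_A$ for some $A\in S$ — precisely the pair required in (i).

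For part (ii), given a template $\f$ with $\underline{\delta}(\f)>\de$, I would take the output set to be $S:=\mathcal{D}(\f)=\{A:\h_A\asymp_+\f\}$, for which $\h_A\asymp_+\f$ holds by definition. This $S$ is Borel: it equals the $F_\sigma$ set $\bigcup_{N\in\N}\bigcap_{t\in\mathbb{Q}_{\ge 0}}\{A:\|\h_A(t)-\f(t)\|_\infty\le N\}$, where I use that $A\mapsto\h_A(t)$ is continuous for each fixed $t$ (successive minima are continuous on the space of unimodular lattices) and that $\h_A$ and $\f$ are continuous in $t$, so the supremum over $t\ge 0$ may be restricted to rational $t$. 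To bound $\dim_H(S)$ from below I would again apply Theorem~\ref{variational principle version 2}, now to the perturbation-closed collection $\mathcal{C}:=\{g:[0,\infty)\to\R^d \text{ Borel}:g\asymp_+\f\}$; here $\mathcal{D}(\mathcal{C})=S$ and $\mathcal{C}\cap\mathcal{T}_{m,n}=\{\g\in\mathcal{T}_{m,n}:\g\asymp_+\f\}$ contains $\f$, whence
$$\dim_H(S)=\dim_H(\mathcal{D}(\mathcal{C}))=\sup_{\g\in\mathcal{T}_{m,n},\,\g\asymp_+\f}\underline{\delta}(\g)\ \ge\ \underline{\delta}(\f)\ >\ \de .$$
The packing statement of (ii) follows verbatim with $\overline{\delta},\dim_P$ in place of $\underline{\delta},\dim_H$.

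I do not anticipate a genuine obstacle: Version~3 is essentially a repackaging of Version~2, and the genuinely nontrivial content — both inequalities in each variational principle's asserted equality between a fractional dimension and a supremal average contraction rate — is exactly what I am permitted to assume. The only points needing a little care are the two routine observations made above: that $\mathcal{D}(\cdot)$ is constant on $\asymp_+$-classes of functions (immediate from transitivity of $\asymp_+$), and that the output set $\mathcal{D}(\f)$ in part (ii) is Borel, in fact $F_\sigma$; neither is deep.
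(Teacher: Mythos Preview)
The paper does not supply its own proof of this statement: all three versions of the variational principle are quoted verbatim from \cite{DFSU20} and used as black boxes in the proofs of Theorems~\ref{bar} and~\ref{hbar}. So there is nothing in the paper to compare your argument against.

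That said, your derivation of Version~3 from Version~2 is correct and is the natural way to see that the two formulations are equivalent. The only steps that required any thought --- closure of your auxiliary family $\mathcal{C}$ under $\asymp_+$, the containment $\mathcal{D}(\mathcal{C})\supseteq S$ in part~(i), and the $F_\sigma$ description of $\mathcal{D}(\f)$ in part~(ii) --- are handled cleanly. One small remark: in part~(i) you invoke Version~2 on a set $\mathcal{D}(\mathcal{C})$ that need not be Borel, but as you note, Hausdorff and packing dimensions (and their monotonicity) are defined for arbitrary subsets, and the statement of Version~2 imposes no Borel hypothesis on $\mathcal{D}(S)$, so this is not an issue.
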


\subsection{Proof of Main Theorems}

\subsection{Proof of Theorem \ref{bar}}

\begin{lemma}\label{lma:pu}
For $d=m+n$, $G=\SL(d,\R)$, let $P_-$ denote the subgroup of lower diagonal block matrices
\[
  P_- :=\left\lbrace \begin{bmatrix}
A & O \\
C & D \end{bmatrix}\in \SL(d,\mathbb Z) \;\middle|\;
  \begin{tabular}{@{}l@{}}
    $A,C,D$ are any real matrices of size $m\times m,n\times m, n\times n$ respectively,\\
    and $O$ is a zero $m\times n$ matrix
   \end{tabular}
  \right\rbrace
\]
and $U^+$ denote the subgroup of unipotent upper diagonal matrices
\[
  U^+ :=\left\lbrace \begin{bmatrix}
I_m & B \\
O & I_m \end{bmatrix}\in \SL(d,\mathbb Z) \;\middle|\;
  \begin{tabular}{@{}l@{}}
 $B$ is a any $m\times n$ real matrix.
   \end{tabular},
  \right\rbrace
\]
then the complement $G-P_-U^+$ has Haar measure zero in $G$.
\end{lemma}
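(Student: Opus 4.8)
The plan is to identify the product set $P_- U^+$ explicitly as the non-vanishing locus of a single polynomial function on $G$, and then to invoke the fact that a proper real-analytic subvariety of the connected manifold $\SL(d,\mathbb{R})$ has Haar measure zero.

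First I would carry out the block multiplication. Writing a generic element of $P_-$ as $\begin{bmatrix} A & O \\ C & D \end{bmatrix}$, so that $\det A \det D = 1$ and in particular $A$ is invertible, and a generic element of $U^+$ as $\begin{bmatrix} I_m & B \\ O & I_n \end{bmatrix}$, one computes
\[
\begin{bmatrix} A & O \\ C & D \end{bmatrix}\begin{bmatrix} I_m & B \\ O & I_n \end{bmatrix} = \begin{bmatrix} A & AB \\ C & CB + D \end{bmatrix}.
\]
Hence every element of $P_- U^+$ has invertible top-left $m \times m$ block. Conversely, given $g = \begin{bmatrix} P & Q \\ R & S \end{bmatrix} \in G$ with $P$ invertible, set $A := P$, $C := R$, $B := P^{-1}Q$, and $D := S - R P^{-1} Q$; the identity above shows $g = \begin{bmatrix} A & O \\ C & D \end{bmatrix}\begin{bmatrix} I_m & B \\ O & I_n \end{bmatrix}$, while the Schur complement formula $\det g = \det P \cdot \det(S - R P^{-1} Q)$ forces $\det A \det D = 1$, so the left factor genuinely lies in $P_-$. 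Therefore
\[
P_- U^+ = \{\, g \in \SL(d,\mathbb{R}) : \det(g_{11}) \neq 0 \,\},
\]
where $g_{11}$ denotes the top-left $m \times m$ block of $g$, and consequently $G - P_- U^+ = \{\, g \in G : \det(g_{11}) = 0 \,\}$.

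Next I would note that $p(g) := \det(g_{11})$ is the restriction to $G$ of a polynomial in the matrix entries, and that it does not vanish identically on $G$, since $p(I_d) = \det(I_m) = 1$. Because $\SL(d,\mathbb{R})$ is a connected real-analytic manifold and its Haar measure is given by a left-invariant, nowhere-vanishing volume form, hence is absolutely continuous with respect to Lebesgue measure in every chart, the claim follows from the standard fact that the zero set of a real-analytic function which is not identically zero on a connected real-analytic manifold is a closed set of measure zero. Equivalently, one may argue algebraically: $\SL(d,\mathbb{R})$ is an irreducible affine variety, so the nontrivial polynomial $p$ cuts out a proper Zariski-closed subset of it, which then has strictly smaller dimension and is therefore Lebesgue-null.

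The substantive content is the first step, namely recognizing that membership in $P_- U^+$ is governed precisely by the invertibility of the leading $m \times m$ minor; once this is in hand, the measure-zero assertion is a soft statement whose only inputs are the connectedness of $\SL(d,\mathbb{R})$ and the smoothness of its Haar measure. Here $P_-$ and $U^+$ in the statement are of course to be read as subgroups of $\SL(d,\mathbb{R})$, with lower-right identity block $I_n$ in the definition of $U^+$.
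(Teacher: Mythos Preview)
Your proof is correct and follows essentially the same route as the paper's elementary argument: compute the block product, reduce membership in $P_-U^+$ to invertibility of the top-left $m\times m$ block, and conclude that the complement lies in the zero locus of a nontrivial polynomial on $\SL(d,\R)$, hence has Haar measure zero. Your version is in fact slightly sharper, since you establish the exact equality $P_-U^+ = \{g\in G:\det(g_{11})\ne 0\}$ via the Schur complement, whereas the paper is content with the containment $G-P_-U^+\subset\{g:\det(g_{11})=0\}$.
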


\begin{proof}
The classical proof of this involves the theory of algebraic geometry and algebraic groups. By looking at the orbit $P_- \backslash  P_-U^+$ in the irreducible projective variety $P_- \backslash G$. By the theory of algebraic groups, $P_- \backslash  P_-U^+$ is Zariski open in its closure and further Zariski open, and therefore dense in $P_- \backslash G$. Therefore its complement in $P_- \backslash G$ has strictly non-full dimension. So $P_-U^+$ has non-full dimension in $G$ and thus of zero Haar measure. 

For the self-containedness purpose,we shall give an elementary proof here that only involves linear algebra and manifold theory. We will study the set of matrices $\begin{bmatrix}
X & Y \\
Z & W \end{bmatrix}$ in $\SL(n,\Z)$ that can be (and cannot be) represented by matrices in $P_-$ and $U^+$. 

Suppose $\begin{bmatrix}
A & O \\
C & D \end{bmatrix}\in P_-$ and $\begin{bmatrix}
I_m & B \\
O & I_n \end{bmatrix} \in U^-$ and we have the following equation of block matrices:

\begin{equation}\label{eq:matrix}
    \begin{bmatrix}
X & Y \\
Z & W \end{bmatrix}=\begin{bmatrix}
A & O \\
C & D \end{bmatrix} \begin{bmatrix}
I_m & B \\
O & I_n \end{bmatrix}=\begin{bmatrix}
A & AB \\
C & BC+D \end{bmatrix}
\end{equation}

\vspace{5mm}
We have the system of equations:

\begin{equation}
    \begin{cases}
        X=A\\
        Z=C\\
        Y=AB\\
        W=BC+D
    \end{cases}\,
\end{equation}
which can be simplified to $Y=XB$ and $W=BZ+D$. Therefore, the matrix equation \ref{eq:matrix} is solvable for $A,B,C,D$ and  if and only if $Y=XB$ is solvable for $B$ (noticing that $W=BZ+D$ always gives the solution $D=W-BZ$). However, if $Y=XB$ is not solvable for $B$, then we must have $\det(X)=0$ for the $m\times m$ matrix $X$ from the beginning. It follows that 
\begin{align*}
    &G-P_-U^+ \\ =& \left \{\begin{bmatrix}
X & Y \\
Z & W \end{bmatrix}\in \SL(d,\R): \begin{bmatrix}
X & Y \\
Z & W \end{bmatrix}~\text{cannot be written as}~\begin{bmatrix}
A & O \\
C & D \end{bmatrix} \begin{bmatrix}
I_m & B \\
O & I_n \end{bmatrix}=\begin{bmatrix}
A & AB \\
C & BC+D \end{bmatrix}  \right \}\\
\subset & \left \{\begin{bmatrix}
X & Y \\
Z & W \end{bmatrix}\in \SL(d,\R): \det(X)=0  \right \}
\end{align*}
By the last term is a subvariety of $\SL(d,\R)$ whose dimension is strictly less than the full dimension, and therefore of Haar measure zero since the Haar measure on Lie groups are given by the full-dimensional volume form.

\end{proof}

\begin{lemma}[Decomposition of Haar measure in Lie groups, Theorem 8.32 in \cite{KN02}]
Let $G$ be a Lie group, and let $S$ and $T$ be closed subgroups such that $S \cap T$ is compact, multiplication $S \times T \to G$ is 
an open map, and the set of products ST exhausts $G$ except possibly for a  set of Haar measure 0. Let $\Delta_T$ and $\Delta_G$ denote the modular functions of $T$ 
and $G$. Then the left Haar measures on $G$, $S$, and $T$ can be normalized so 
that 
$$\int_G f(x) dx = \int_{S\times T}f(st)\frac{\Delta_T(t)}{\Delta_G(t)} dsdt,$$
for all Borel function $f\ge 0$ on $G$.
\end{lemma}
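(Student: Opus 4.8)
This is a ``two-sided'' Weil integration formula, and the plan is to exhibit the right-hand side as a Haar integral on $G$. Define on $C_c(G)$ the positive linear functional
\[
\Lambda(f):=\int_{S}\int_{T}f(st)\,\frac{\Delta_T(t)}{\Delta_G(t)}\,d_\ell t\,d_\ell s,
\]
where $d_\ell s$, $d_\ell t$ denote left Haar measures on $S$, $T$. I would show that $\Lambda$ is well defined (the integral is finite), nonzero, and invariant under left translation by every $g\in G$. Then by the Riesz representation theorem $\Lambda$ is integration against a Radon measure on $G$; by uniqueness of left Haar measure that measure is a positive constant multiple of $dx$; and rescaling one of the three Haar measures, then passing from $C_c(G)$ to nonnegative Borel $f$ by monotone convergence and inner regularity, yields the stated identity.

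The geometric input is the multiplication map $\phi\colon S\times T\to G$, $\phi(s,t)=st$, together with the compact group $K:=S\cap T$. One checks that $\phi(s,t)=\phi(s',t')$ exactly when $(s')^{-1}s=t't^{-1}\in K$, so the fibres of $\phi$ are precisely the orbits of the free $K$-action $k\cdot(s,t)=(sk^{-1},kt)$; since multiplication is assumed open, $\phi$ induces a homeomorphism from $(S\times T)/K$ onto $ST$, and since $ST$ is open and conull in $G$ this identifies $(S\times T)/K$ with a conull open subset of $G$. Modular functions are trivial on compact subgroups (a continuous homomorphism into $\mathbb R_{>0}$ has compact, hence trivial, image on $K$), so $\Delta_S|_K=\Delta_T|_K=\Delta_G|_K\equiv 1$; feeding this into the change of variables for the $K$-action shows that the measure $d\nu(s,t):=\tfrac{\Delta_T(t)}{\Delta_G(t)}\,d_\ell s\,d_\ell t$ on $S\times T$ is $K$-invariant. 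Normalising the Haar measure on $K$ to have total mass $1$, $\nu$ descends to a Radon measure on $(S\times T)/K$ and hence, via the homeomorphism above, to a Radon measure $\lambda$ on $G$ (extended by $0$ off $ST$) with $\int_G f\,d\lambda=\Lambda(f)$ for all $f$. Left-$S$-invariance of $\lambda$ is then immediate from left-invariance of $d_\ell s$, and finiteness of $\Lambda(f)$ for $f\in C_c(G)$ follows from compactness of the fibres of $\phi$ together with openness of $\phi$, so that $\phi^{-1}(\operatorname{supp}f)$ has finite $\nu$-measure.

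The heart of the matter, and the step I expect to be the main obstacle, is promoting left-$S$-invariance to full left-$G$-invariance of $\lambda$; this is exactly the computation that pins down the correction factor $\tfrac{\Delta_T(t)}{\Delta_G(t)}$, and it is delicate precisely because $S$ may be a proper (measure-zero) submanifold of $G$, so one cannot argue naively in coordinates on $G$. I would handle it through the fibration $q\colon G\to G/T$: Weil's quotient integral formula provides a strongly quasi-invariant Radon measure on $G/T$ whose disintegration against the $T$-cosets, after the usual modular correction, reproduces the inner $T$-integral in $\Lambda$; on the other hand $S$ acts on $G/T$ with stabiliser $K$ at the base point, and the orbit $ST/T$ is open (the image under the open map $q$ of the open set $ST$) and conull, so $s\mapsto s\cdot(eT)$ induces a homeomorphism from $S/K$ onto $ST/T$. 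Pulling the quasi-invariant measure on $G/T$ back along this identification gives a nonzero $S$-invariant Radon measure on $S/K$, which by uniqueness is proportional to the pushforward of $d_\ell s$; unwinding the two descriptions shows that $\lambda$ and left Haar measure on $G$ agree, up to a single positive constant, on the conull open set $ST$, and since $G\setminus ST$ is null this equality propagates to all of $G$. Tracking the normalisations of $d_\ell s$, $d_\ell t$ and the measure on $G/T$ through this argument is where essentially all the bookkeeping lives; conceptually it is just Weil's formula applied twice, once to the $S$-action on $G/T$ and once to the $T$-fibration.
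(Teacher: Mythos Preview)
The paper does not prove this lemma at all: it is quoted verbatim as Theorem~8.32 of \cite{KN02} and used as a black box, with only a short remark afterward checking that the hypotheses apply when $S=P_-$ and $T=U^+$. So there is no ``paper's own proof'' to compare against.

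That said, your outline is a reasonable sketch of the standard argument (and is, in spirit, close to how Knapp proves it). A few comments. Your reduction to showing that $\Lambda$ is a left Haar integral is the right strategy, and the identification of the fibres of $\phi$ with $K$-orbits, together with $\Delta_S|_K=\Delta_T|_K=\Delta_G|_K\equiv 1$, is correct and is what makes the measure descend. The genuinely nontrivial step, as you correctly flag, is upgrading left-$S$-invariance to left-$G$-invariance; your plan to route this through the quotient $G/T$ and Weil's quasi-invariant measure is the standard one, but as written it is still a plan rather than a proof: you have not actually verified that the specific factor $\Delta_T(t)/\Delta_G(t)$ (as opposed to some other function of $t$) is exactly what makes the pushforward to $G/T$ match the $S$-invariant measure on $S/K$. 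That computation is where the modular functions enter, via the identity $d_r t = \Delta_T(t)\,d_\ell t$ and the transformation rule for the quasi-invariant measure on $G/T$ under left translation by $g\in G$. If you want this to be a complete proof you should write that calculation out explicitly; otherwise, citing Knapp as the paper does is entirely appropriate here.
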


\begin{remark}
    We can apply this lemma to the scenario where $S=P_-$ and $T=U^+$ since they are both closed subgroups with trivial (compact) intersection. That $P_-\times U^+ \to G$ is open follows from the fact that this map is injective (since $S=P_-$ and $T=U^+$ have trivial intersection) and thus an immersion for Lie groups. Also the product maps in Lie groups are submersions. Therefore it gives an local diffeomorphsm and thus open. Finally, the previous lemma gives $G-P_-U^+$ is of measure zero. 
\end{remark}

\begin{theorem}[Birkhoff's Pointwise Ergodic Theorem, the discrete version,\cite{EW11},Theorem 2.30] Let $(X,\mathscr{B},\mu)$ ba a probability space and $T:X\to X$ be an ergodic measure-preserving transformation. If $f\in L^1(X)$, then 
$$\lim_{n\to \infty} \frac{1}{n} \sum_{i=0}^{n-1}f(T^ix)=\int_X f d\mu,$$
for $\mu$-almost every $x\in X$.
\end{theorem}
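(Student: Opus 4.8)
The plan is to prove the pointwise ergodic theorem in the classical way, via the Maximal Ergodic Theorem. Write $A_n f := \tfrac1n\sum_{i=0}^{n-1} f\circ T^i$ for the ergodic averages and set $\bar f := \limsup_n A_n f$, $\underline f := \liminf_n A_n f$; the goal is to show $\bar f = \underline f = \int_X f\,d\mu$ almost everywhere. The first, essentially formal, step is that $\bar f$ and $\underline f$ are $T$-invariant: writing $A_{n+1}f = \tfrac{1}{n+1}f + \tfrac{n}{n+1}(A_n f)\circ T$ and passing to $\limsup$ (resp. $\liminf$) shows $\bar f\circ T = \bar f$ and $\underline f\circ T = \underline f$. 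Hence, once these functions are known to be finite a.e. (which will come out of the maximal inequality below), ergodicity of the $T$-action forces each of them to be a.e. constant.

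The heart of the matter is the \emph{Maximal Ergodic Theorem}: for $g\in L^1(X)$, with $S_n g := \sum_{i=0}^{n-1} g\circ T^i$ and $E := \{\sup_{n\ge 1} S_n g > 0\}$, one has $\int_E g\,d\mu\ge 0$. I would prove it by Garsia's argument: fix $N$, set $M_N := \max(0, S_1 g,\dots,S_N g)\ge 0$ and $E_N := \{M_N > 0\}$; since $S_{n+1}g = g + (S_n g)\circ T \le g + M_N\circ T$ for $1\le n\le N$, a short case check (the case $n=1$ included) gives $g \ge M_N - M_N\circ T$ on $E_N$, so $\int_{E_N} g\,d\mu \ge \int_X M_N\,d\mu - \int_X (M_N\circ T)\,d\mu = 0$, using $M_N=0$ off $E_N$, $M_N\circ T\ge 0$, and invariance of $\mu$ under $T$. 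Letting $N\uparrow\infty$ gives the statement. Applied to $g := |f| - \lambda$, this yields the weak-type bound $\mu\big(\sup_n A_n|f| > \lambda\big) \le \|f\|_1/\lambda$, so $\sup_n A_n|f| < \infty$ a.e.; in particular $\bar f,\underline f$ are finite a.e., as needed above.

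Next I would show $\bar f = \underline f$ a.e. For rationals $\alpha > \beta$ let $E_{\alpha,\beta} := \{\underline f < \beta < \alpha < \bar f\}$, a $T$-invariant set; since $\{\underline f < \bar f\} = \bigcup_{\alpha>\beta} E_{\alpha,\beta}$, it is enough to show $\mu(E_{\alpha,\beta}) = 0$ for each pair. Apply the maximal theorem to $g := (f-\alpha)\mathbf{1}_{E_{\alpha,\beta}}\in L^1$: on the $T$-invariant set $E_{\alpha,\beta}$ one has $\bar f > \alpha$, so each point there lies in $\{\sup_n S_n g > 0\}$; since also $S_n g$ vanishes off $E_{\alpha,\beta}$, that superlevel set equals $E_{\alpha,\beta}$ and we get $\int_{E_{\alpha,\beta}}(f-\alpha)\,d\mu \ge 0$, i.e. $\int_{E_{\alpha,\beta}} f\,d\mu \ge \alpha\,\mu(E_{\alpha,\beta})$. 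Running the same argument with $(\beta - f)\mathbf{1}_{E_{\alpha,\beta}}$ gives $\int_{E_{\alpha,\beta}} f\,d\mu \le \beta\,\mu(E_{\alpha,\beta})$; since $\alpha > \beta$ this forces $\mu(E_{\alpha,\beta}) = 0$. Therefore $\bar f = \underline f =: f^\ast$ a.e., and by the first paragraph $f^\ast$ is a.e. equal to a constant $c$.

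Finally I would identify $c$ with $\int_X f\,d\mu$. For $f\ge 0$, Fatou's lemma together with $\int_X A_n f\,d\mu = \int_X f\,d\mu$ gives $c = \int_X f^\ast\,d\mu \le \int_X f\,d\mu$; for the opposite inequality, truncate at height $M$ and apply the (elementary) bounded case of the theorem — a.e. convergence plus dominated convergence — to $\min(f,M)$, obtaining $\int_X \min(f,M)\,d\mu \le c$, then let $M\to\infty$ by monotone convergence. The general $f\in L^1$ follows by writing $f = f^+ - f^-$ and using linearity of $A_n$. The main obstacle, and the only genuinely nontrivial ingredient, is the Maximal Ergodic Theorem in the second paragraph — specifically verifying the pointwise inequality $g\ge M_N - M_N\circ T$ on $E_N$ with the boundary cases handled correctly; everything else is $T$-invariance-plus-ergodicity bookkeeping and routine measure-theoretic limits.
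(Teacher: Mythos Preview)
Your proof is correct and follows the classical route through the Maximal Ergodic Theorem, with Garsia's argument for the latter; the invariance of $\bar f,\underline f$, the weak-type bound, the $E_{\alpha,\beta}$ argument, and the identification of the limiting constant via truncation are all handled properly. However, the paper does not actually prove this statement: it is quoted from Einsiedler--Ward (Theorem~2.30 in the cited reference) and used as a black box to deduce the continuous-time version in the corollary that immediately follows. So there is no proof in the paper to compare against; you have supplied a complete and standard argument where the paper simply invokes the literature.
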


\begin{corollary}[Birkhoff's Pointwise Ergodic Theorem, the continuous version \label{corcts}] Let $G$ be a topological group with continuous action on $X$,namely $G\times X\to X, (g,x)\to g.x$ is continuous and let $(X,\mathscr{B},\mu)$ a probability space with $\mu$ a $G$-invariant probability measure . Suppose that $(g_t)_{t\in \R}$ is a one-parameter subgroup of $G$ and that there the discrete subgroup $(g_{n})_{n\in Z}$ acting ergodically on $X$.

If $f\in L^1(X)$, then 
$$\lim_{t\to +\infty} \frac{1}{T} \int_0^{T} f(g_t.x)=\int_X f d\mu,$$
for $\mu$-almost every $x\in X$.
\end{corollary}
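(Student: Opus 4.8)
The plan is to reduce the continuous-time average to the discrete pointwise ergodic theorem already stated, applied to the time-one map $g_1$, which acts ergodically by hypothesis. First I would introduce the fibered function
\[
F(x):=\int_0^1 f(g_s.x)\,ds
\]
and verify that it is a bona fide element of $L^1(X)$: joint measurability of $(s,x)\mapsto f(g_s.x)$ follows from continuity of the action together with measurability of $f$, and then Tonelli's theorem combined with the $g_s$-invariance of $\mu$ gives $\int_X\int_0^1|f(g_s.x)|\,ds\,d\mu(x)=\int_X|f|\,d\mu<\infty$; the same computation yields $\int_X F\,d\mu=\int_X f\,d\mu$. The key algebraic observation is the additivity identity $\sum_{i=0}^{n-1}F(g_1^i.x)=\int_0^n f(g_t.x)\,dt$, obtained via the substitution $t=i+s$ on each unit subinterval together with the group law $g_{i+s}=g_1^i g_s$.

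Next I would apply the discrete pointwise ergodic theorem to $F$ and to the ergodic transformation $T=g_1$, obtaining for $\mu$-almost every $x$
\[
\frac1n\int_0^n f(g_t.x)\,dt=\frac1n\sum_{i=0}^{n-1}F(g_1^i.x)\longrightarrow\int_X F\,d\mu=\int_X f\,d\mu
\]
as $n\to\infty$ through the integers. It then remains to pass from integer times to arbitrary real times: writing $T=n+\theta$ with $n=\lfloor T\rfloor$ and $\theta\in[0,1)$, the difference between $\frac1T\int_0^T$ and $\frac1n\int_0^n$ splits into a rescaling error $\big(\tfrac1T-\tfrac1n\big)\int_0^n f(g_t.x)\,dt$ and a boundary error $\tfrac1T\int_n^T f(g_t.x)\,dt$.

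To control both I would invoke the discrete theorem a second time, now for the nonnegative function $F^\ast(x):=\int_0^1|f(g_s.x)|\,ds\in L^1(X)$. Since $\frac1n\sum_{i=0}^{n-1}F^\ast(g_1^i.x)$ converges a.e., its increments satisfy $\frac1n F^\ast(g_1^{n}.x)\to 0$ a.e., and moreover $\frac1n\int_0^n|f(g_t.x)|\,dt$ stays bounded a.e.\ along the integers. Hence the boundary error is at most $\tfrac1T F^\ast(g_1^{n}.x)\le \tfrac1n F^\ast(g_1^{n}.x)\to 0$, and the rescaling error is at most $\big(\tfrac1n-\tfrac1{n+1}\big)\int_0^n|f(g_t.x)|\,dt=O(1/n)\to 0$; combining with convergence along integers finishes the proof. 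The main obstacle is precisely this last point — controlling the boundary term when $f$ is merely integrable rather than bounded, in which case the naive estimate $O(1/T)$ is unavailable — and the device that resolves it is applying Birkhoff to $F^\ast$ to force $\tfrac1n F^\ast(g_1^n.x)\to 0$. A minor technical point to dispatch along the way is the joint measurability underpinning the Tonelli/Fubini steps, which comes for free from continuity of the action.
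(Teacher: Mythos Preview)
Your proof is correct and follows essentially the same route as the paper: both reduce to the discrete Birkhoff theorem for $T=g_1$ applied to the smeared function $F(x)=\int_0^1 f(g_s.x)\,ds$, then pass from integer to real times. Your treatment of the boundary term is in fact more careful than the paper's---the paper simply writes the remainder as $O_x(1)$, which is not justified for general $f\in L^1$, whereas your device of applying Birkhoff to $F^\ast$ to force $\tfrac{1}{n}F^\ast(g_1^n.x)\to 0$ is exactly what is needed to close that gap.
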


\begin{proof}
First, observe that from the definition of ergodicity we have immediately that $(g_{n})_{n\in Z}$ acting ergodically on $X$  implies that $g_1$ acts on $X$ ergodically. Now taking $T:=g_1$ and $f_s(x):=\int_0^s f(g_t.x)dt$ in the discrete version of Birkorff's ergodic theorem, noticing that $f_s$ is again a $L^1$-function since $s$ is fixed and 
$$\int_X f_s(x) d\mu=\int_X \int_0^s f(g_t.x)dt d\mu=\int_0^s \int_X  f(g_t.x)d\mu dt<\infty,$$
by Fubini's theorem.

The discrete version of Birkorff's ergodic theorem gives us
$$\lim_{n\to \infty} \frac{1}{n}\sum_{i=0}^{n-1}f_s(g_{is}.x)=\int_X f_s d\mu$$
Therefore for this fixed $s$, we have
\begin{align*}
    \lim_{T\to \infty}\frac{1}{T}\int_0^T f(g_t.x)dt 
    =& \lim_{n\to \infty} \frac{1}{s\lfloor \frac{T}{s} \rfloor} \left (\sum_{i=0}^{\lfloor \frac{T}{s} \rfloor-1}\int_{is}^{(i+1)s} f(g_{t}.x)dt + O_x(1) \right)\\
    =& \lim_{n\to \infty} \frac{1}{s\lfloor \frac{T}{s} \rfloor} \left (\sum_{i=0}^{\lfloor \frac{T}{s} \rfloor-1}\int_0^s f(g_{t+is}.x)dt + O_x(1) \right)\\
    =& \lim_{n\to \infty} \frac{1}{s\lfloor \frac{T}{s} \rfloor} \left (\sum_{i=0}^{\lfloor \frac{T}{s} \rfloor-1} f_s(g_{is}.x)dt + O_x(1) \right)\\
    =&\frac{1}{s}\int_X f_s d\mu \tag{by the discrete Birkorff}\\
    =&\frac{1}{s}\int_X \int_0^s f(g_t.x)dt d\mu \\
    =&\frac{1}{s}\int_0^s \int_X  f(g_t.x)d\mu dt =\frac{1}{s}\int_0^s \int_X  f(g_t.x)d\mu dt \tag{by Fubini}\\
    =& \frac{1}{s}\int_0^s \int_X  f(x)d\mu dt
    \tag{$\mu$ is $g_t$-invariant}\\
    =& \int_X f(x)d\mu.
\end{align*}

\end{proof}

\begin{lemma}\label{positive}
For any $r=1,2,\dots d-1$ and $\delta>0$, the set
$B_{\delta}^r:= \{\Lambda \in \mathcal{L}:\lambda_r(\Lambda) < \delta\}$ has positive measure, where the measure is the unique $\SL(d,\R)$ invariant measure in the homogeneous space
$G/\Gamma:=\SL(d,\R) / \SL(d,\Z)$.

\end{lemma}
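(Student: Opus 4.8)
The plan is to observe that $B_\delta^r$ is a \emph{nonempty open} subset of $\mathcal L = G/\Ga$ and that the $\SL(d,\R)$-invariant probability measure is strictly positive on every nonempty open set. For openness, identify $\mathcal L$ with the space of unimodular lattices in $\R^d$ carrying the quotient topology: if $\La = g\Z^d$ and $\La' = hg\Z^d$ with $h$ near the identity, then each lattice vector $hv$ of $\La'$ is close to the corresponding vector $v$ of $\La$. So if $v_1,\dots,v_r \in \La$ are linearly independent with $\|v_i\|_2 < \delta$, the same holds for $hv_1,\dots,hv_r \in \La'$ once $h$ is close enough to the identity (linear independence being an open condition via the determinant, and the norms varying continuously), whence $\lambda_r(\La') < \delta$. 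Thus $\pi^{-1}(B_\delta^r)$ is open in $G$ and $B_\delta^r$ is open in $G/\Ga$; equivalently, this is just the standard continuity of $\lambda_r\colon \mathcal L \to (0,\infty)$.

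For nonemptiness I would exhibit a single diagonal lattice. Since $r \le d-1$, set $a := \delta/2$ and
\[
  \La_0 := \diag\big(\underbrace{a,\dots,a}_{r},\ \underbrace{a^{-r/(d-r)},\dots,a^{-r/(d-r)}}_{d-r}\big)\Z^d .
\]
Its covolume equals $a^{r}\cdot a^{-r}=1$, so $\La_0 \in \mathcal L$, and the first $r$ coordinate vectors of $\Z^d$ scaled by $a$ are linearly independent elements of $\La_0$ of Euclidean norm $a=\delta/2<\delta$; hence $\lambda_r(\La_0)\le \delta/2 < \delta$ and $\La_0 \in B_\delta^r$. This uses $d-r\ge 1$ in an essential way: for $r=d$, Hadamard's inequality shows that a covolume-one lattice has no $d$ linearly independent vectors all of norm $<1$ --- the sublattice they would generate has covolume a positive integer, hence $\ge 1$, yet $<1$ by Hadamard --- so $B_\delta^d=\varnothing$ whenever $\delta\le 1$, which is exactly why the hypothesis $r\le d-1$ is imposed.

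Finally, a nonempty open subset of $G/\Ga$ has positive invariant measure: since $\SL(d,\R)$ acts transitively on the $\sigma$-compact space $G/\Ga$ and $\mu$ is $\SL(d,\R)$-invariant, if a nonempty open set $O$ had $\mu(O)=0$ then countably many translates $g_iO$ would cover $G/\Ga$, forcing $\mu(G/\Ga)=0$ and contradicting that $\mu$ is a probability measure. Applying this to $O=B_\delta^r$ completes the proof. There is no serious obstacle here --- the only points deserving a line of care are that ``possessing $r$ short linearly independent vectors'' is an open condition on the lattice, and that $r\le d-1$ is precisely what makes $B_\delta^r$ nonempty; the resulting positivity is exactly the input needed to run the Birkhoff-ergodic-theorem argument (Corollary \ref{corcts}) showing that $\BA_r(m,n)$ is Lebesgue null.
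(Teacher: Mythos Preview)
Your proof is correct and follows essentially the same three-step approach as the paper: exhibit a lattice in $B_\delta^r$ (the paper alludes to one containing $\tfrac{\delta}{2}e_1,\dots,\tfrac{\delta}{2}e_{d-1}$, while you give the explicit unimodular diagonal lattice), use continuity of $\lambda_r$ to get openness, and conclude via the covering argument that nonempty open sets carry positive invariant measure. Your version is slightly more detailed---you spell out the openness directly and build the witness lattice carefully---but there is no substantive difference in strategy.
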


\begin{proof}
First we observe that for $r=1,2,\dots d-1$, $B_{\delta}^r$ is nonempty since it contains the elements
$$\frac{\de}{2}e_1,\cdots,\frac{\de}{2}e_{d-1},$$
which already form a linearly independent set in $B_\delta^r$ of lengths all less than $\de$. Note that for $\de$ small enough, this cannot be generalized to $r=1,2,\dots, d$ due the Minkowski's second convex body theorem \ref{Min2}.

By the continuity of $\lambda_r$ on $G/\Gamma$ (by the Theorem \ref{thm:cts}), $B_\de^r$ is open. 

Any open subset on $G/\Gamma$. Indeed, since any open subset of ${\SL(d,\R)}/{\SL(d,\Z)}$ has countably many translations under $\text{SL}(n,\mathbb Q)$ whose union will cover the whole space, whose measure is $1$, and this follows from the $G$-invariance of the measure. 
\end{proof} 

\begin{proof}[Proof of the Theorem \ref{bar}]
We first notice that the condition
$$\limsup_{t\to \infty} -\h_{A,r}(t)<\infty.$$
is equivalent to $$\inf_{t\ge 0} \lambda_r(g_t u_A \Z^d) \ge \delta,$$
for some $\delta>0$.

When $r=d(=m+n)$, we notice that by the Theorem \ref{Min2} (Minkowski's second convex body theorem), $$[\lambda_d(g_t u_A \Z^d)]^d \ge \prod_{r=1}^d\lambda_r(g_t u_A \Z^d)\asymp_d \text{covol}(g_t u_A \Z^d)=1.$$

Hence there exist a $\delta_d > 0$ such that for any $m\times n$ matrix $A$ and $t\ge 0$, 
$$\inf_{t\ge 0} \lambda_d(g_t u_A \Z^d) \ge \delta_d.$$
Namely $\BA_{d}(m,n)=\R^{m\times n}$.

For $r\le d-1$, we first observe that the set 
$\{\Lambda \in \mathcal L: \inf_{t\ge 0} \lambda_r(g_t \Lambda)\ge \delta\}$ is $g_t$-invariant for any $t\ge 0$. So by the ergodicity of $(g_n)_{n\in \Z}$-action (Theorem \ref{howe}), this set has $\mu$-measure zero or 1. So it suffices to show its complement $\{\Lambda \in \mathcal L: \inf_{t\ge 0} \lambda_r(g_t \Lambda)< \delta\}$ has positive measure for any $\delta>0$. However, by the continuous version of Birkoff's ergodic theorem, Corollary \ref{corcts},
$$\lim_{T\to \infty}\frac{1}{T}\int_0^T f(g_t.\Lambda)dt=
\int_X f(\Lambda)d\Lambda,$$
for $\mu$-almost every $\Lambda \in \mathcal{L}$ (identified with $X:= \SL(d,\R) /\SL(d,\Z)$ and $f\in L^1(\mathcal{L},\mu)$.

Now take $f$ as the characteristic function on the set $B_{\delta}^r:=\{\Lambda \in \mathcal{L}:\lambda_r(\Lambda) < \delta\}$, which is of positive measure by Lemma \ref{positive} and it follows that for $\mu$-a.e. $\Lambda \in \mathcal{L}$,
\begin{equation}
    \lim_{T\to \infty}\frac{1}{T}\int_0^T \textbf{1}_{B_{\delta}^r}(g_t.\Lambda)dt >0.
\end{equation}
This means for almost every lattice $\Lambda \in \mathcal{L}$ and over a positive proportion of time $t \in [0,T]$ when $T$ is large, $\lambda_r(g_t\Lambda) < \delta$. More precisely, thanks to the boundedness of $\textbf{1}$, for $\mu$-a.e. $\Lambda$ and any $T_0>0$, 
$$\lim_{T\to \infty}\frac{1}{T} \int_{T_0}^T \textbf{1}_{B_{\delta}^r}(g_t.\Lambda) dt >0.$$

In particular, for $\mu$-a.e. $\Lambda$ and any $T_0>0$, there exists $t>t_0$ such that 
\begin{equation}\label{limsup}
    \limsup_{t\to \infty} f(g_t.\Lambda)<\delta.
\end{equation}
This together with the fact that the set $B_{\delta}^r:=\{\Lambda \in \mathcal{L}:\lambda_r(\Lambda) < \delta\}$, which is of positive measure gives  
$$\mu \left(\{\Lambda \in \mathcal L: \inf_{t\ge 0} \lambda_r(g_t \Lambda)< \delta\} \right)\ge
\mu \left(\{\Lambda \in \mathcal L: \limsup_{t\ge 0} \lambda_r(g_t \Lambda)< \delta\} \right )>0$$

Therefore by the ergodicity of $g_t$-action, $\{\Lambda \in \mathcal L: \inf_{t\ge 0} \lambda_r(g_t \Lambda)\ge \delta\}$ has zero $\mu$-measure in the space of unimodular lattices.

It remains for us to show that the set of matrices corresponding to the lattices $\{u_A \Z^d\}$ has measure zero.

To this end, we first recall the root space decomposition for $\frak{sl}(d,\R)$, cf. \cite{KN02} Chapter II section 1:
\begin{align*}
    \frak{sl}(d,\R) 
    &= \frak{h}\oplus_{i\ne j} \frak{g}_{ij} \\
    &= (\oplus_{i\ne j:1\le i \le m, 1\le j\le n;~m+1\le i \le m+n, m+1\le j \le m+n}\frak{g}_{ij}) \oplus\frak{h}  \oplus (\oplus_{ 1\le i \le m,m+1\le j \le m+n} \frak{g}_{ij}) \\
    &=  \frak{p}_- \oplus \frak{u}_+.
\end{align*}

Under the exponential map $\exp:\frak{sl}(d,\R) \to \SL(d,\Z)$, $\frak{p}_-$ corresponds to the subgroup $P_-$ in $\SL(d,\R)$, and $\frak{u}_+$ corresponds to the subgroup $U^+$ in $\SL(d,\R)$, cf. lemma \ref{lma:pu}. In particular, 
$$g_t\in \exp{\frak{p}_-},u_A\in \exp{\frak{u}_+} $$
Since the canonical quotient map $\pi:\SL(d,\R) \to \SL(d,\R)/\SL(d,\Z)$ is a local diffeomorphism of manifolds, for any $A\in M(m\times n,\R)$, we have that the map
\begin{align*}
\pi \circ \exp:\frak{sl}(d,\R)=\frak{p}_- \oplus \frak{u}_+ &\longrightarrow  \SL(d,\R)/\SL(d,\Z)\\
(X_-,X_+) &\longmapsto \exp{X_-} \exp{X_+} \cdot u_A\SL(d,\Z)
\end{align*}
gives a local coordinate at the point $u_A\SL(d,\Z)$.  

Observing that 
$$g_t\exp{X_-} \exp{X_+} \cdot u_A\SL(d,\Z)=g_t\exp{X_-} g_{-t}\cdot g_t\exp{X_+} \cdot u_A\SL(d,\Z)$$
and that
$$\left\{g_t\exp{X_-}g_{-t}\right\}_{t\ge 0}$$
is bounded, since $\exp{X_-}$ is a lower triangular block matrix contracted to the identity matrix as $t \to \infty$ under the $\{g_t\}_{t\ge 0}$ conjugation, and in view of the inequality in the Lemma \ref{bound}, we have for fixed $X_-,A$
$$\lambda_r\left(g_t u_A\SL(d,\Z)\right)~\text{is bounded below away from zero} $$
if and only if
$$\lambda_r \left(g_t\exp{X_-} \cdot u_A\SL(d,\Z)\right)~\text{is bounded below away from zero}.$$
By what we proved above using the ergodicity of $(g_t)$-action
$$\mu \{x\in G/\Gamma: \inf_{t\ge 0}\lambda_r \left(g_t x\right) > 0\}=0$$

Now notice that the subgroup $U^+:=\left \{ \begin{bmatrix}
I_m & A \\
0 & I_n 
\end{bmatrix}:A\in M(m\times n,\R)  \right\}$ is naturally isomorphic to $M(m\times n,\R)$ with multiplication corresponding to the addition. The Haar measure on $U^+$, by the uniqueness, can be identified with the Lebesgue measure on $M(m\times n,\R)$. Notice that with the local diffeomorphism $\pi:\SL(d,\R) \to \SL(d,\R)/\SL(d,\Z)$, the identification also can also go from the Haar measure on $U^+\Gamma/\Gamma$ to the Lebesgue measure on $M(m\times n,\R)$.

Now by Lemma \ref{lma:pu}, almost every element $g\Gamma$ in $G/\Gamma:=\SL(d,\R) / \SL(d,\Z)$ has the decomposition
$g\Gamma=pu\Gamma$ with $p\in P_-$ and $u\in U^+$. By the decomposition of Haar measure $dx=\frac{\Delta_{U^+}(u)}{\Delta_{G}(u)}dpdu$ in $G$ and the local identification of Haar measures on $G$ and $G/\Gamma$, we have that
\begin{align*}
    0=&\int_G \mathbbm{1}_{\{x\in G/\Gamma: \inf_{t\ge 0}\lambda_r \left(g_t x \right) > 0\}}dx \tag{by the ergodicity of $g_t$-action}\\
    = & \int_{U^+} \int_{P_-} \mathbbm{1}_{\{pu\Gamma\in G/\Gamma: \inf_{t\ge 0}\lambda_r \left(g_t pu\Gamma \right) > 0\}}\frac{\Delta_{U^+}(u)}{\Delta_{G}(u)} dp du \tag{by the decomposition of Haar measure}\\
    = & \int_{P_-}\int_{U^+}   \mathbbm{1}_{\{pu\Gamma\in G/\Gamma: \inf_{t\ge 0}\lambda_r \left(g_t pu\Gamma \right) > 0\}}\frac{\Delta_{U^+}(u)}{\Delta_{G}(u)} du dp \tag{by the Fubini's theorem}\\
    = & \int_{P_-}\int_{U^+}  \mathbbm{1}_{\{pu\Gamma\in G/\Gamma: \inf_{t\ge 0}\lambda_r \left(g_t u\Gamma \right) > 0\}}\frac{\Delta_{U^+}(u)}{\Delta_{G}(u)}du dp \tag{by the contraction above on $g_t$-contraction}\\
    = & \int_{P_-}\int_{U^+}  \mathbbm{1}_{\{u\Gamma\in G/\Gamma: \inf_{t\ge 0}\lambda_r \left(g_t u\Gamma \right) > 0\}}\frac{\Delta_{U^+}(u)}{\Delta_{G}(u)}du dp \tag{since $pu\Gamma \in G/\Gamma$ if and only if $u\Gamma \in G/\Gamma$}\\
\end{align*}
Therefore,
    $$\int_{U^+}  \mathbbm{1}_{\{u\Gamma\in G/\Gamma: \inf_{t\ge 0}\lambda_r \left(g_t u\Gamma \right) > 0\}}\frac{\Delta_{U^+}(u)}{\Delta_{G}(u)} du=0,  $$
    
and thus $\mathbbm{1}_{\{u\Gamma\in G/\Gamma: \inf_{t\ge 0}\lambda_r \left(g_t u\Gamma \right) >0\}}=0$ for $\mu$-almost all $u\in U^+$ or equivalently 
    $$\mu_{U^+}(\{pu\Gamma\in G/\Gamma: \inf_{t\ge 0}\lambda_r \left(g_t u\Gamma \right)>0\})= 0.$$

Finally, by the identification between the Haar measure on $U^+$ and the Lebesgue measure on $\R^{m\times n}$, we have
    $$ m(\{A \in\text{some neighborhood of O in }~M(m\times n,\R): \inf_{t\ge 0}\lambda_r \left(g_t u_A\Gamma \right) > 0\})=0 $$

Note that the above argument also works if we replace $\Gamma$ with $u_{A_0}\Gamma$ for some $A_0 \in M(m\times n,\R)$ and a countable union of zero-measure sets is again of zero measure. This completes the proof of the Theorem \ref{bar}.


\end{proof}

\subsection{Proof of Theorem \ref{hbar}}

To illustrate the idea of the proof, we first observe that the Examples \ref{zerotpl} and \ref{quadtpl} allows us to construct an ``electrocardiography" template with deeper and deeper ``pulse" (corresponding to quadrilateral partial templates) that spends larger and larger proportion of time with zero templates. However, the limitation for this construction, as discussed in the Example \ref{quadtpl}, is that we will violate the bounds for the derivatives of components for such templates when $r>\min(m,n)$. So the main task left for us is to lower the slopes for the templates so that they are confined in $[-\frac{1}{n},\frac{1}{m}]$ and at the same time make sure we still have the ``quantized" accumulated slope as required in the definition of templates, cf. Definition \ref{templates}.   

To this end, we need to following lemma from the elementary number theory, which will later allow us to generalize the bound on $r$ to $r\le \max(m,n)$:

\begin{lemma}\label{lma:elem}
For positive integers $m,n,r$ with $r\le \max(m,n)$, we have
\begin{equation}
    r-n \le \left \lceil \frac{mr}{m+n}-1 \right \rceil \
\end{equation}
and dually
\begin{equation}
    r-m \le \left \lceil \frac{nr}{m+n}-1 \right \rceil \
\end{equation}
\end{lemma}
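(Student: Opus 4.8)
The plan is to reduce both inequalities to the trivial fact that $\max(m,n) < m+n$ for positive integers $m,n$. By the symmetry of the statement under interchanging $m$ and $n$ (which swaps the two displayed inequalities), it suffices to establish the first one, $r - n \le \left\lceil \frac{mr}{m+n} - 1 \right\rceil$.

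First I would invoke the standard characterization of the ceiling function: for an integer $k$ and a real number $y$, one has $\lceil y \rceil \ge k$ if and only if $y > k - 1$. Applying this with $k = r - n$ (an integer) and $y = \frac{mr}{m+n} - 1$, the desired inequality becomes equivalent to $\frac{mr}{m+n} - 1 > (r-n) - 1$, that is, to the strict inequality $\frac{mr}{m+n} > r - n$.

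Next I would clear denominators: since $m+n > 0$, this is equivalent to $mr > (r-n)(m+n)$; expanding the right-hand side and cancelling the common term $mr$ leaves $0 > rn - mn - n^2 = n(r - m - n)$, hence, dividing by $n > 0$, the inequality $r < m+n$. This last inequality holds because $\min(m,n) \ge 1$ forces $\max(m,n) \le m+n-1$, while by hypothesis $r \le \max(m,n)$. The dual inequality $r - m \le \left\lceil \frac{nr}{m+n} - 1 \right\rceil$ then follows by exchanging the roles of $m$ and $n$.

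There is no genuine obstacle here; the only point requiring care is the direction of the ceiling inequality ($\lceil y \rceil \ge k$ corresponds to the \emph{strict} inequality $y > k-1$, not $y \ge k-1$). It is also worth noting that the chain of equivalences is valid for every real $r$, so no case distinction between $r \le n$ and $r > n$ is needed — the hypothesis $r \le \max(m,n)$ is used only at the very last step.
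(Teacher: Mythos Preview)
Your proof is correct and tidier than the paper's. Both arguments reduce the ceiling inequality to an elementary bound, but the paper first splits into the trivial case $r\le n$ and the case $n<r\le m$, then manipulates via $\lceil x+k\rceil=\lceil x\rceil+k$ and a monotonicity step (using $r\le m$) to arrive at $\lceil n-\tfrac{nm}{m+n}-1\rceil\ge 0$, which follows from $\tfrac{nm}{m+n}<n$. Your use of the characterization $\lceil y\rceil\ge k\iff y>k-1$ (for integer $k$) collapses all of this into the single equivalence $r<m+n$, with no case analysis; this also makes transparent that the hypothesis could be weakened to $r\le m+n-1$.

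One small correction to your closing remark: the chain of equivalences is \emph{not} valid for arbitrary real $r$, because the equivalence $\lceil y\rceil\ge k\iff y>k-1$ requires $k=r-n$ to be an integer. This does not affect the proof, since $r$ is assumed to be a positive integer, but the aside as stated is inaccurate.
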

\begin{proof}For the first inequality, if $r\le n$, then it trivially holds (notice that the right hand side is always nonnegative). Hence it suffices to assume $n< r \le m$.
\begin{align*}
& r-n\le \left \lceil \frac{mr}{m+n}-1 \right \rceil \\
 \iff & r-n\le \left \lceil r-\frac{nr}{m+n}-1 \right \rceil\\
 \iff & r-n\le \left \lceil -\frac{nr}{m+n}-1 \right \rceil+r \tag{since $\left \lceil x+r \right  \rceil =\left \lceil x \right  \rceil+r$}\\
 \iff & -n\le \left \lceil -\frac{nr}{m+n}-1 \right \rceil \\
  \iff & 0\le \left \lceil n-\frac{nr}{m+n}-1 \right \rceil \\
  \impliedby & 0\le \left \lceil n-\frac{nm}{m+n}-1 \right \rceil  \tag{by the monotonicity of ceiling function}\\
\end{align*}
The nonnegativeness of the right hand side on the last line follows from $\frac{nm}{m+m}<n$. This proves the first inequality and the second follows from switching $m$ and $n$.

\end{proof}

\begin{proof}

Recall the variational principle gives us
\begin{equation*}
    \dim_H(\mathcal{D}(S))=\sup_{\textbf{f} \in S\cap \mathcal{T}_{m,n}} \underline{\delta}(\f)
\end{equation*}
and 
\begin{equation*}
        \dim_P(\mathcal{D}(S))=\sup_{\textbf{f} \in S\cap \mathcal{T}_{m,n}} \overline{\delta}(\f)
\end{equation*}
where $S$ is a (Borel) collection of functions closed under finite perturbation and $\D(S):=\{A\in \R^{mn}:\h_A \in S\}$, where $\h_A$ is the successive minima function of $A$. In this proof we shall take
$$S:=\{g\in C([0,\infty),\R^d): \liminf_{x\to \infty} g_i(x) =-\infty, \forall i\le r; \liminf_{x\to \infty} g_{i}(x)>-\infty,\forall i\ge r+1\}.$$
Then it follows that 
$$\mathcal{D}(S)=\BA_{r+1}(m,n)-\BA_r(m,n)$$

We first observe that for any $m\times n$ template in $\f$, we have by definition of lower and upper contraction rates:
$$\de(\f)\le mn.$$

In view of the variational principles,it suffices to find a template (or a sequence of templates) whose contraction rates is equal to (or approximates) $mn$. 

Let $\tau_1,\tau_2$ be two positive number, which will represent slopes for templates, to be determined later. We now construct a piecewise linear map $\f$ as follows:

For convenience let us denote 
$$a_k=k^k-k, b_k:=k^k,$$

and $b_0=0$. Observe that $b_{k-1}\le a_k$ for all $k\ge 1$.

For $n=1,2,3,4\cdots$, we first define the restriction of $\f$ on $[b_k-k,b_k]$, denoted $\q_k:=\q_k[\tau_1,\tau_2]$ as follows

For the first $r$ components of $\q_k$,
\begin{equation}
q_{k1}=q_{k2}\cdots=q_{kr}=
    \begin{cases}
        -\frac{\tau_1}{r}\left(x-b_k+n\right) & \text{if }   x\in [b_k-k,\eta_k(\tau_1,\tau_2)]\\
        \frac{\tau_2}{d-r}\left (x-b_k \right) & \text{if }  x\in [\eta_k(\tau_1,\tau_2),b_k]
    \end{cases},
\end{equation}
where $\eta_k[\tau_1,\tau_2]$ is the point satisfies 
$$\frac{-b_k+k+\eta_k(\tau_1,\tau_2)}{-\eta_k(\tau_1,\tau_2)+b_k}=\frac{\tau_2}{\tau_1}.$$

For the last $d-r$ components of $\q_k$, we set
$$q_{n,r+1}=\cdots=q_{nd}$$
so that $$q_{k1}+q_{k2}+\cdots+q_{kd}\equiv 0.$$

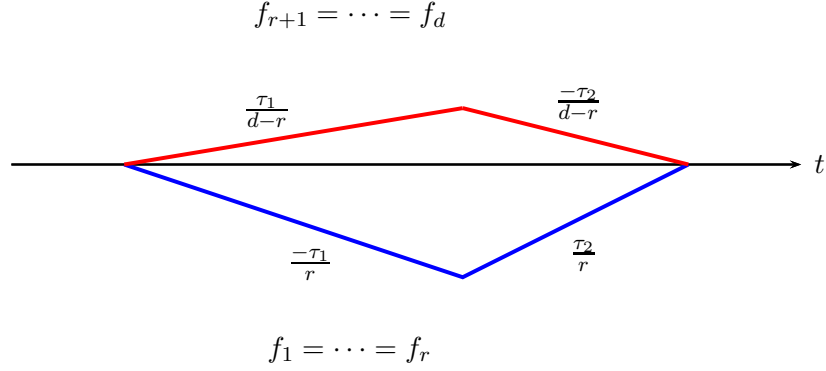
\begin{figure}
\begin{center}
\psset{unit=1.5cm}
    \begin{pspicture}[showgrid=false](0,-0.75)(4,2)
    \psaxes[yAxis=false, ticks=none, labels=none, linecolor=black]{->}(0,0)(-1,-0.5)(6,1.5)[$t$,0][,-10]
    \psset{algebraic,linewidth=1.5pt,linecolor=blue}
    \pscustom
    {
        \psplot{0}{3}{-x/3}
        \psplot{3}{5}{(x-5)/2}{$f_1$}   
        \rput(1,1){$y=\log x$}
    }
    \psset{algebraic,linewidth=1.5pt,linecolor=red}
    {
        \psplot{0}{3}{x/6}
        \psplot{3}{5}{-(x-5)/4}
    }
        \pcline[linestyle=none](2,3)(0,0)\naput{$f_{r+1}=\cdots=f_d$}
        \pcline[linestyle=none](2,1.5)(0,0)\naput{$\frac{\tau_1}{d-r}$}
        \pcline[linestyle=none](2,-2)(2,0.25)\naput{$\frac{-\tau_1}{r}$}
        \pcline[linestyle=none](6,0)(2,-1)\naput{$\frac{\tau_2}{r}$}
        \pcline[linestyle=none](6,1)(2,0.75)\naput{$\frac{-\tau_2}{d-r}$}
        \pcline[linestyle=none](5,-2.5)(0,0)\naput{$f_{1}=\cdots=f_r$}

\end{pspicture}
\end{center}
    \vspace{10mm}
    \caption{Construction of $\q[\tau_1,\tau_2]$.}
    \label{fig:qtpl}
\end{figure}

and then we define
\begin{equation}
\f=
    \begin{cases}
        \textbf{0} & \text{if }   x\in [b_{k-1},b_k-k]\\
         \textbf{q}_k[\tau_1,\tau_2](x) & \text{if }  x\in [b_k-k,b_k]
    \end{cases}.
\end{equation}

It is easy to see that 
$$\lim_{n\to \infty} f_r(\eta_k(\tau_1,\tau_2))=-\infty,$$
and therefore
$$\liminf_{t\to +\infty} f_r(t)=-\infty.$$

Now let us see under what circumstances will $\f$ indeed becomes a template on $[0,\infty)$. Recall that we have three conditions for a template:

 \renewcommand{\labelenumi}{(\Roman{enumi})}
\begin{enumerate}
    \item $f_1 \le \cdots \le f_d$.
    \item $-\frac{1}{n} \le f_i' \le \frac{1}{m}$ for all $i$.
    \item For all $j = 0, \dots , d$ and for every interval $I$ such that $f_{j} < f_{j+1}$ on $I$, the function

$F_j:=\sum_{0<i \le j} f_i$ is convex and piecewise linear on $I$ with slopes in
\begin{equation}
    Z(j):=\{\frac{L_+}{m}- \frac{L_-}{n}:L_{\pm}\in [0,d_{\pm}]_{\Z}, L_+ + L_- =j \}
\end{equation}
Here $d_+=m$ and $d_-=n$.
\end{enumerate}

(I) is obvious from our construction. So our next goal is to find an appropriate choice $\tau_1$ and $\tau_2$ so that (II) and (III) hold.

The template $\f$ we defined above, restricted on the interval $[b_k-k,b_k]$, has two intervals of linearity, namely 
$$I_1:=[b_k-k,\eta_k(\tau_1,\tau_2)], \text{and } I_2:=[\eta_k(\tau_1,\tau_2),b_k].$$

On the both of them, we have the separation of components of templates as follows:
$$-\infty=:f_0<f_1=f_2=\cdots=f_r<f_{r+1}=\cdots=f_d=f_{d+1}:=+\infty.$$
By the definition of templates, there are two (integer) \textit{intervals of equality}:
$$(0,r]_{\Z}\text{~and~}(r,d]_{\Z},$$
where the subscripts means intersections with the set of integers, forming a partition of the set $\{1,2,\cdots,d\}$.

On the interval of linearity $I_1$, we have slopes:
\begin{equation}
    \begin{cases}
      f_1'(a_k+)=\cdots=f_r'(a_k+)=-\frac{\tau_1}{r}\\
      f_{r+1}'(a_k+)=\cdots=f_r'(a_k+)=+\frac{\tau_1}{d-r}
    \end{cases}\,.
\end{equation}

Now we will follow the procedures of computing the lower and upper contractions rates:
 \renewcommand{\labelenumi}{(\arabic{enumi})}

\begin{enumerate}
    \item On the intervals of linearity $I_1$, we shall determine the possible values for $L_{\pm}:=L_{\pm}(I_1,q)$ as follows:
    \begin{enumerate}
        \item For $q=0$,
        \begin{equation*}
            \begin{cases}
            L_++L_-=0\\
            \frac{L_+}{m}-\frac{L_-}{n}=F'_0=0
         \end{cases}\iff L_+=L_-=0.
        \end{equation*}
        
        \item For $q=d$,
        \begin{equation*}
            \begin{cases}
            L_++L_-=d=m+n\\
            \frac{L_+}{m}-\frac{L_-}{n}=F'_d=0
         \end{cases}\iff L_+=m, L_-=n.
        \end{equation*}
        
        \item For $q=r$,
        \begin{equation*}
            \begin{cases}
            L_++L_-=r\\
            \frac{L_+}{m}-\frac{L_-}{n}=F'_r=-\tau_1         
           
         \end{cases}
          \vspace{5mm}
        \end{equation*}
        Note that in this case, in order to make our solution      
        \begin{align*}
            L_+ &=\frac{rm-mn\tau_1}{m+n}\\
            L_- &=\frac{rn+mn\tau_1}{m+n}
        \end{align*}
        
        satisfy the conditions (II) and (III) in the definition of templates. We need:
        
        \begin{align*}
            & \tau_1>0 \tag{1} \\
            & L_+=\frac{rm-mn\tau_1}{m+n} \in [0,m]_{\Z} \tag{2}\\
            & L_-=r-L_+=\frac{rn+mn\tau_1}{m+n} \in [0,n]_{\Z} \tag{3}\\
            & -\frac{1}{n}\le -\frac{\tau_1}{r} \le \frac{1}{m} \tag{4}\\
            & -\frac{1}{n}\le \frac{\tau_1}{d-r} \le \frac{1}{m} \tag{5}\\  
        \end{align*} 
 Solving (2) for $\tau_1$ and using (1), we get
 \begin{equation}
     \tau_1=\frac{-(m+n)L_+ + rm}{mn}=\frac{r}{n}-L_+(\frac{m+n}{mn})>0
 \end{equation}
 
Now we can list the possible choices for    $L_+, L_-,\tau_1$ in the following table: 

\begin{table}[h]
\begin{center}
\setlength\tabcolsep{15pt} 
\renewcommand{\arraystretch}{1.8}

\begin{tabular}{|c|c|c|c|c|}
\hline
$L_+$  & $0$ & $1$ & $\cdots$ & $\left \lceil \frac{rm}{m+n}-1 \right \rceil$ \\ \hline

$L_-$  & $r$  & $r-1$ & $\cdots$ & $r-\left \lceil \frac{rm}{m+n}-1 \right \rceil $ \\ \hline

$\tau_1$ & $\frac{r}{n}$ & $\frac{r}{n}-\frac{m+n}{mn}$ & $\cdots$ & $(\tau_1)_{\min}$                                     \\ \hline
\end{tabular}
\caption{\label{integer}Correspondence between $L_{\pm}$ and $\tau_1$ on $I_1$.}
\end{center}
\end{table}

Under the condition $\tau_1>0$, the maximal possible value for $L_+$ is $\left \lceil \frac{rm}{m+n}-1 \right \rceil$, which gives us the minimal positive value for $\tau_1$. Note that here we cannot choose $L_+=\left \lfloor \frac{rm}{m+n} \right \rfloor$ since this may result in $\tau_1=0$ if $\frac{rm}{m+n}$ is an integer.

Now we turn to look at the conditions (4) and (5). Given $\tau_1>0$, it is equivalent to saying
$$0<\tau_1\le \min \{ \frac{r}{n}, \frac{d-r}{m}\}$$
        \end{enumerate}
Note that 
$$(\tau_1)_{\min}=  \tau_1=\frac{-(m+n)\left \lceil \frac{rm}{m+n}-1 \right \rceil + rm}{mn}=\frac{r}{n}-\left \lceil \frac{rm}{m+n}-1 \right \rceil(\frac{m+n}{mn})\le \frac{r}{n}$$ 

So it suffices to check if 
$$\frac{r}{n}-\left \lceil \frac{rm}{m+n}-1 \right \rceil(\frac{m+n}{mn})\le \frac{d-r}{m}$$

But this is equivalent to 
\begin{align*}
    &\frac{r}{n}-\frac{d-r}{m} \le \left \lceil \frac{rm}{m+n}-1 \right \rceil \frac{m+n}{mn}\\
    \iff & (\frac{1}{m}+\frac{1}{n})r-\frac{d}{m} \le \left \lceil \frac{rm}{m+n}-1 \right \rceil\\
    \iff & r-n=r-\frac{\frac{d}{m}}{\frac{1}{m}+\frac{1}{n}} \le \left \lceil \frac{rm}{m+n}-1 \right \rceil
\end{align*}
By our lemma above, the last line holds for $r\le \max(m,n)$.

\item On the intervals of linearity $I_2$, we shall determine the possible values for $L_{\pm}:=L_{\pm}(I_2,q)$ as follows:
    \begin{enumerate}
        \item For $q=0$,
        \begin{equation*}
            \begin{cases}
            L_++L_-=0\\
            \frac{L_+}{m}-\frac{L_-}{n}=F'_0=0
         \end{cases}\iff L_+=L_-=0.
        \end{equation*}
        
        \item For $q=d$,
        \begin{equation*}
            \begin{cases}
            L_++L_-=d=m+n\\
            \frac{L_+}{m}-\frac{L_-}{n}=F'_d=0
         \end{cases}\iff L_+=m, L_-=n.
        \end{equation*}
        
        \item For $q=r$,
        \begin{equation*}
            \begin{cases}
            L_++L_-=r\\
            \frac{L_+}{m}-\frac{L_-}{n}=F'_r=+\tau_2           
            \end{cases}\iff            
            \begin{cases}
            L_+ =\frac{rm+mn\tau_2}{m+n}\\
            L_- =\frac{rn-mn\tau_2}{m+n}
            \end{cases}
        \end{equation*} 
    \end{enumerate}
\end{enumerate}

A similar discussion yields the possible values for $L_+,L_-$ and $\tau_2$ as listed in the following table:

\begin{table}[h]
\begin{center}
\setlength\tabcolsep{15pt} 
\renewcommand{\arraystretch}{1.8}

\begin{tabular}{|c|c|c|c|c|}
\hline
$L_-$  & $0$ & $1$ & $\cdots$ & $\left \lceil \frac{rn}{m+n}-1 \right \rceil$ \\ \hline

$L_+$  & $r$  & $r-1$ & $\cdots$ & $r-\left \lceil \frac{rn}{m+n}-1 \right \rceil $ \\ \hline

$\tau_2$ & $\frac{r}{m}$ & $\frac{r}{m}-\frac{m+n}{mn}$ & $\cdots$ & $(\tau_2)_{\min}$                                     \\ \hline
\end{tabular}
\caption{\label{integer}Correspondence between $L_{\pm}$ and $\tau_2$ on $I_2$.}
\end{center}
\end{table}

From the derivative restrictions:
\begin{align*}
        & -\frac{1}{n}\le \frac{\tau_2}{r} \le \frac{1}{m} \\
        & -\frac{1}{n}\le \frac{\tau_2}{d-r} \le \frac{1}{m} 
\end{align*}
we have $\tau \le \min\{\frac{r}{m},\frac{d-r}{n}\}$, but $\tau_2\le \frac{r}{m}$ is automatic and the condition
$$(\tau_2)_{\min}=-\frac{m+n}{mn}\left \lceil \frac{rn}{m+n}-1 \right \rceil+\frac{r}{m}\le \frac{d-r}{n}$$
is equivalent to 
$$r-m \le \left \lceil \frac{rn}{m+n}-1 \right \rceil.$$
Again, this is guaranteed by the proceeding lemma as long as $r\le \max{(m,n)}$. 

Summarizing what we have done so far, we have demonstrated that it is possible to find slopes $\tau_1,\tau_2$ such that the $d$-component piecewise linear function $\f=\q[\tau_1,\tau_2]$ on $[a_k,b_k]=[b_k-k,b_k]$ as constructed is indeed a partial template as long as $r\le \max(m,n)$.

Now we will show that 
$$\de(\f)=mn.$$

Indeed,
$$\de(\f):=\lim_{T\to \infty}\Delta(\f,T):= \lim_{T\to \infty}\frac{1}{T}\int_0^T \de(\f,[0,t])dt,$$
where $0< \de(\f,[a_k,b_k])\le mn$ and $\de(\f,[b_{n-1},a_k])=mn$.

Since $$0=:b_0<b_1<\cdots<b_k=n^n\to \infty,$$
any $T>0$ lies in the dilating period $(b_{n-1},b_k]$ of $\f$ for some $n\ge 0$. In each of such period, the average of integral $\lim_{T\to \infty}\frac{1}{T}\int_0^T \de(\f,[0,T])dt$ reaches its maximum at $T=a_k$ and its minimum at $T=b_k$:
$$\frac{1}{a_k}\int_0^{a_k} \de(\f,[0,t])dt \le \frac{1}{T}\int_0^T \de(\f,[0,t])dt \le \frac{1}{b_k}\int_0^{b_k} \de(\f,[0,t])dt \le mn.$$

But

\begin{align*}
    \frac{1}{a_k}\int_0^{a_k}\de(\f,[0,t])dt
    \ge & \frac{1}{a_k} \sum_{i=1}^n \int_{b_{i-1}}^{a_i} \de(\f,[0,t])dt\\
    = & \frac{1}{a_k} \sum_{i=1}^n \int_{b_{i-1}}^{a_i} mn~dt \\
    = & mn \frac{1}{k^k-k} \sum_{i=1}^k (i^i-i-(i-1)^{i-1})\\
    = & mn \frac{k^k}{k^k-k} - mn \frac{k(k+1)}{2(k^k-k)}
\end{align*}
which converges to zero as $k\to \infty$.

Therefore, by the variational principle, 
$$\dim_P(\BA(r+1)-\BA(r))=\dim_H(\BA(r+1)-\BA(r))=mn,$$
for all $r\le \max(m,n)$. Since $\frac{d}{2} \le \max(m,n)$, we have proved the theorem for at least half of the orders.

For the computation of cases when $r>\max(m,n)$, we will study the dual of the orbit of lattices $(g_tu_A\Z^d)_{t\ge 0}$.

Recall that for a lattice $\Lambda\subset \R^d$ with basis $\{\bb_1,\cdots,\bb_d\}$, since $\bb_1,\cdots,\bb_d$ are linearly independent, from linear algebra we know there exist vectors $\bb_1^*, \cdots \bb_d^*$, call \textit{dual vectors} to 
$\bb_1,\cdots,\bb_d$, such that
$$\langle \bb_i,\bb_i^*\rangle=
\begin{cases}
0 & i\ne j\\
1 & i=j
\end{cases}.
$$

\vspace{5mm}
The $\Z$-span of dual basis vectors, namely $\Lambda^*:=\text{Span}\{\bb_1^*, \cdots \bb_d^*\}$, is called the \textit{dual (or polar or reciprocal) lattice} to the lattice $\Lambda$. \label{dual lattice}

Although defined through basis, it turns out that the dual lattices are independent of the choice of basis of the original lattice.

\begin{proposition}
The dual lattice $\Lambda^*$ consists of all vectors $\bb^* \in \R^d$ such that $\langle \bb^*,\bb \rangle$ is an integer for all $\bb$ in $\Lambda$. As a consequence, $\Lambda^*$ is also the dual of $\Lambda$.
\end{proposition}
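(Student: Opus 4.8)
The plan is to prove the set-theoretic description first and then derive reflexivity as a formal consequence. Fix a basis $\bb_1,\dots,\bb_d$ of $\Lambda$ with dual vectors $\bb_1^*,\dots,\bb_d^*$, so that $\langle\bb_i,\bb_j^*\rangle=\delta_{ij}$, and set $M:=\{\bb^*\in\R^d:\langle\bb^*,\bb\rangle\in\Z\text{ for all }\bb\in\Lambda\}$. The goal is to show that $\Lambda^*=\Span_\Z\{\bb_1^*,\dots,\bb_d^*\}$ equals $M$.

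First I would record that $\bb_1^*,\dots,\bb_d^*$ form an $\R$-basis of $\R^d$: if $B$ is the invertible matrix whose columns are $\bb_1,\dots,\bb_d$, then the vectors $\bb_j^*$ are precisely the columns of $(B^\top)^{-1}$, hence linearly independent. The inclusion $\Lambda^*\subseteq M$ is then immediate from bilinearity of $\langle\cdot,\cdot\rangle$: for $\bb^*=\sum_i c_i\bb_i^*$ with $c_i\in\Z$ and $\bb=\sum_j n_j\bb_j$ with $n_j\in\Z$ one gets $\langle\bb^*,\bb\rangle=\sum_i c_i n_i\in\Z$. For the reverse inclusion $M\subseteq\Lambda^*$, write an arbitrary $\bb^*\in\R^d$ as $\sum_i c_i\bb_i^*$ with $c_i\in\R$ (possible by the basis property); pairing against $\bb_j$ gives $\langle\bb^*,\bb_j\rangle=c_j$, so membership in $M$ forces every $c_j\in\Z$, i.e. $\bb^*\in\Lambda^*$. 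This establishes $\Lambda^*=M$, and since the description of $M$ makes no reference to the chosen basis, the same computation simultaneously justifies that $\Lambda^*$ is independent of the basis of $\Lambda$.

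For the final clause, the key observation is that the relation $\langle\bb_i,\bb_j^*\rangle=\delta_{ij}$ is symmetric in the two families: it exhibits $\{\bb_1,\dots,\bb_d\}$ as exactly the family of dual vectors to $\{\bb_1^*,\dots,\bb_d^*\}$. In particular $\Lambda^*$ is itself a lattice with distinguished basis $\{\bb_i^*\}$, so applying the part of the proposition already proved to $\Lambda^*$ with this basis gives $(\Lambda^*)^*=\Span_\Z\{\bb_1,\dots,\bb_d\}=\Lambda$; equivalently, the integrality characterization of $(\Lambda^*)^*$ is manifestly symmetric in $\Lambda$ and $\Lambda^*$, which again yields $\Lambda^{**}=\Lambda$.

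I do not expect a genuine obstacle here. The only points requiring a moment's care are the claim that the dual vectors span $\R^d$ — this is where invertibility of the basis matrix is used, and it guarantees that the real coordinates $c_i$ above are unique — and the symmetry of the dual-vector relation, which is what powers the reflexivity assertion $\Lambda^{**}=\Lambda$.
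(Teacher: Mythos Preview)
Your argument is correct and follows essentially the same route as the paper: both inclusions are obtained by expanding in the basis $\{\bb_i\}$ and dual basis $\{\bb_i^*\}$ and pairing with the $\bb_j$ to force integrality of the coefficients. You are slightly more careful than the paper in explicitly noting that the $\bb_i^*$ span $\R^d$, and you supply the reflexivity step $(\Lambda^*)^*=\Lambda$ via the symmetry of the duality relation, which the paper's proof leaves unwritten.
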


\begin{proof}
Let $\bb_1,\cdots,\bb_d$ be a basis of the lattice $\Lambda$ and their duals be $\bb_1^*,\cdots,\bb_d^*$. For any $\bb \in \Lambda$ and any $\cc \in \Lambda^*$, suppose
$$\bb=s_1\bb_1+\cdots+s_d\bb_d, \text{ and } \cc=t_1\bb_1^*+\cdots+t_d\bb_d^*$$

with integer coefficients $s_i,t_i\in \Z$ for $i=1,2,\cdots, d.$ We have immediately that 
$$\langle \bb,\cc \rangle = s_1t_1\cdots+s_d t_d\in \Z.$$

On the other hand, if $\bb^*= u_1\bb_1^*+\cdots+u_d\bb_d^* \in \R^d$, where $u_i \in \R$ satisfies $\langle \bb^*,\bb \rangle \in \Z,$ for any $\bb \in \Z$, then in particular this holds for  $\bb=\bb_i$, for any $i=1,2,\cdots,d$ and thus
$$u_i=\langle \bb^*,\bb_i \rangle  \in \Z. $$
Therefore $\bb^* \in \Lambda^*$.

\end{proof}

The dual lattice operator commutes nicely with an invertible linear transformation on $\R^d$:

\begin{proposition}\label{dual is same as transpose inverse}
Let $\Lambda$ be a lattice on $\R^d$ and $T:\R^d \to \R^d$ be an invertible linear transformation, then we have
$$(T\Lambda)^*=T^*\Lambda^*,$$
where $T^*={}^tT^{-1}$ is the inverse of the transpose of $T$ and $\Lambda^*$ is the dual lattice to $\Lambda$. 
\end{proposition}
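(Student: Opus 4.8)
The plan is to deduce this directly from the intrinsic description of the dual lattice established in the previous proposition, namely that $\Lambda^* = \{w \in \R^d : \langle w, v\rangle \in \Z \text{ for all } v \in \Lambda\}$, together with the adjunction identity $\langle w, Tv\rangle = \langle {}^tT\,w, v\rangle$ for the standard inner product on $\R^d$. This lets one avoid choosing bases and instead argue entirely through a chain of equivalences.

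First I would apply the characterization to the lattice $T\Lambda$: since $T$ is invertible, $Tv$ ranges over all of $T\Lambda$ as $v$ ranges over $\Lambda$, so a vector $w$ lies in $(T\Lambda)^*$ if and only if $\langle w, Tv\rangle \in \Z$ for every $v \in \Lambda$. Next I would rewrite $\langle w, Tv\rangle = \langle {}^tT\,w, v\rangle$, so that this condition becomes $\langle {}^tT\,w, v\rangle \in \Z$ for all $v \in \Lambda$; by the characterization applied to $\Lambda$ itself, this says precisely ${}^tT\,w \in \Lambda^*$. Since ${}^tT$ is invertible, this is equivalent to $w \in ({}^tT)^{-1}\Lambda^* = T^*\Lambda^*$. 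Reading the chain of equivalences in both directions gives the two inclusions $(T\Lambda)^* \subseteq T^*\Lambda^*$ and $T^*\Lambda^* \subseteq (T\Lambda)^*$ at once, hence equality.

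An alternative, purely computational route would be to fix a basis $\bb_1,\dots,\bb_d$ of $\Lambda$, form the matrix $B = [\bb_1 \mid \cdots \mid \bb_d]$, note that the dual basis vectors are the columns of $({}^tB)^{-1}$, and then compute $({}^t(TB))^{-1} = ({}^tT)^{-1}({}^tB)^{-1}$ to read off that the dual of $T\Lambda$ is spanned by $T^*$ applied to the dual basis of $\Lambda$. I would prefer the first approach, since it reuses the proposition just proved and needs no basis.

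There is no serious obstacle here; the only thing requiring care is the transpose/inverse bookkeeping — specifically, verifying that the adjoint of $T$ with respect to $\langle\cdot,\cdot\rangle$ is ${}^tT$ (not ${}^tT^{-1}$), and that pushing the condition ``${}^tT\,w \in \Lambda^*$'' back through the invertible map ${}^tT$ yields $({}^tT)^{-1} = T^*$ rather than its inverse. Once that is pinned down the argument is complete.
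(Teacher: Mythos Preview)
Your proof is correct, and your primary approach differs from the paper's. The paper takes exactly the ``alternative, purely computational route'' you sketch at the end: it fixes a basis $\bb_1,\dots,\bb_d$ of $\Lambda$, writes down the defining matrix equation for the dual basis of $T\bb_1,\dots,T\bb_d$, then observes that the vectors ${}^tT^{-1}\bb_i^*$ satisfy the same equation and invokes uniqueness of matrix inverses to conclude $(T\bb_i)^* = T^*\bb_i^*$. Your main argument instead leverages the intrinsic characterization $\Lambda^* = \{w : \langle w,v\rangle \in \Z \text{ for all } v\in\Lambda\}$ from the preceding proposition together with the adjoint identity $\langle w,Tv\rangle = \langle {}^tT\,w,v\rangle$, giving a basis-free chain of equivalences. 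Your version is cleaner and more conceptual, and it makes better use of the proposition just proved; the paper's version is more hands-on but makes the dual-basis correspondence explicit at the level of individual vectors. Either is perfectly adequate for this elementary fact.
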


\begin{proof}
If $\bb_1,\cdots, \bb_d$ is a basis of $\Lambda$, then $\bb_1,\cdots,T\bb_d$
is a basis of $T\Lambda$. The corresponding dual basis
$$(T\bb_1)^*,\cdots,(T\bb_d)^*$$
satisfy 
\begin{equation*}
\begin{bmatrix}
{}^t(T\bb_1)^* \\
\vdots \\
{}^t(T\bb_1)^* 
\end{bmatrix}  
\begin{bmatrix}
T\bb_1 & \cdots & T\bb_d 
\end{bmatrix} =I_d 
\end{equation*}
But on the other hand,
\begin{equation*}
     \begin{bmatrix}
{}^t({}^tT^{-1}\bb_1) \\
\vdots \\
{}^t({}^t T^{-1}\bb_d) 
\end{bmatrix}  
\begin{bmatrix}
T\bb_1 & \cdots & T\bb_d 
\end{bmatrix} =I_d 
\end{equation*}
So by the uniqueness of inverse matrix, $T^*\bb_i={}^t T^{-1}\bb_i= (Tb_i)$, for any $i=1,2\cdots,d$. Therefore $(T\Lambda)^*=T^*\Lambda^*.$
\end{proof}

We are able to use the idea of dual lattice to address the issue of higher $r$'s, thanks to the following theorem:

\begin{theorem}[\cite{CA97} Chapter VIII, Theorem VI]\label{sucessive minima of dual lattice}
Let $\lambda_1,\cdots,\lambda_d$ be the successive minima of lattices in $\R^d$. Then for a lattice $\Lambda$ and its dual $\Lambda^*$, we have
$$1\le \lambda_r(\Lambda) \lambda_{d+1-r} (\Lambda^*) \le d!$$
for any $r=1,2,\cdots d$.
\end{theorem}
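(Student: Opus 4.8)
This is a classical transference inequality relating the successive minima of a lattice and those of its dual; the lower and upper bounds are proved by rather different mechanisms, so the plan is to treat them separately. For the lower bound $\lambda_r(\Lambda)\lambda_{d+1-r}(\Lambda^*)\ge 1$ I would argue by a dimension count. Since the successive minima are attained, choose linearly independent $v_1,\dots,v_r\in\Lambda$ with $\|v_i\|_2\le\lambda_r(\Lambda)$ and linearly independent $w_1,\dots,w_{d+1-r}\in\Lambda^*$ with $\|w_j\|_2\le\lambda_{d+1-r}(\Lambda^*)$. Put $V=\operatorname{span}\{v_1,\dots,v_r\}$, so that $\dim V^{\perp}=d-r<d+1-r$; hence the $(d+1-r)$-dimensional span of $w_1,\dots,w_{d+1-r}$ cannot be contained in $V^{\perp}$, and therefore $\langle v_i,w_j\rangle\ne 0$ for some pair $(i,j)$. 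By the Proposition characterizing $\Lambda^*$ as the set of vectors pairing integrally against all of $\Lambda$, we have $\langle v_i,w_j\rangle\in\Z$, so $|\langle v_i,w_j\rangle|\ge 1$; Cauchy--Schwarz for the $l^2$-norm (the norm defining the $\lambda_i$) then yields $1\le|\langle v_i,w_j\rangle|\le\|v_i\|_2\,\|w_j\|_2\le\lambda_r(\Lambda)\lambda_{d+1-r}(\Lambda^*)$.

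For the upper bound I would combine the lower bound, used simultaneously at all indices, with Minkowski's second convex body theorem (Theorem~\ref{Min2}). Applying that theorem to $\Lambda$ and to $\Lambda^*$ bounds $\prod_{i=1}^d\lambda_i(\Lambda)$ and $\prod_{i=1}^d\lambda_i(\Lambda^*)$ by constant multiples of $\operatorname{covol}(\Lambda)$ and $\operatorname{covol}(\Lambda^*)$ respectively; since $\operatorname{covol}(\Lambda^*)=\operatorname{covol}(\Lambda)^{-1}$ (write $\Lambda=T\Z^d$ and apply Proposition~\ref{dual is same as transpose inverse}), the covolumes cancel and one obtains $\prod_{i=1}^d\lambda_i(\Lambda)\cdot\prod_{i=1}^d\lambda_i(\Lambda^*)\le c_d$ with $c_d$ depending only on $d$. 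Reindexing the second product by $i\mapsto d+1-i$ rewrites the left-hand side as $\prod_{s=1}^d\bigl(\lambda_s(\Lambda)\lambda_{d+1-s}(\Lambda^*)\bigr)$; discarding every factor except the one with $s=r$, using the lower bounds $\lambda_s(\Lambda)\lambda_{d+1-s}(\Lambda^*)\ge 1$ for $s\ne r$, gives $\lambda_r(\Lambda)\lambda_{d+1-r}(\Lambda^*)\le c_d$.

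The one genuinely delicate point is the constant: the soft argument above yields only an explicit but non-optimal $c_d$ (one may take $c_d=4^d/\kappa_d^2$, with $\kappa_d$ the volume of the Euclidean unit ball), whereas the statement asserts the sharp value $d!$. Sharpening $c_d$ to $d!$ requires the more careful combinatorial estimates of \cite{CA97}, which I would cite rather than reproduce, since every use of this theorem in the present paper (for instance, relating $\lambda_r$ of the flow orbit $g_t u_A \Z^d$ to $\lambda_{d+1-r}$ of the dual orbit when $r>\max(m,n)$ in the proof of Theorem~\ref{hbar}) only invokes the inequality up to a multiplicative constant depending on $d$, so the precise value is immaterial for our purposes.
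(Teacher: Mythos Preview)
The paper does not supply its own proof of this theorem: it is quoted verbatim from Cassels with the citation \cite{CA97} and then immediately used. So there is no ``paper's approach'' to compare against; your proposal is a genuine addition rather than a reconstruction.

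Your argument is correct. The lower bound via the dimension count and Cauchy--Schwarz is the standard proof and is complete as written. For the upper bound, combining Minkowski's second theorem on $\Lambda$ and on $\Lambda^*$ with $\operatorname{covol}(\Lambda^*)=\operatorname{covol}(\Lambda)^{-1}$, then peeling off a single factor using the already-established lower bounds, is exactly the right mechanism and yields a constant $c_d$ depending only on $d$. (In fact a short Stirling computation shows $4^d/\kappa_d^2\le d!$ for all $d\ge 1$, so your ``non-optimal'' constant is, amusingly, no worse than the stated one; the refinement in Cassels goes in the other direction, towards Mahler's sharper bound.)

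Your closing remark is also on point: in the proof of Theorem~\ref{hbar} the theorem is invoked only to pass between $\liminf_t\lambda_r(g_tu_A\Z^d)=0$ and $\limsup_t\lambda_{d+1-r}((g_tu_A\Z^d)^*)=\infty$, for which any two-sided bound by positive constants depending on $d$ suffices. So the proposal, as it stands, fully supports every use the paper makes of the result.
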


Now let us return to our proof, for the flow of lattices $(g_tu_A\Lambda)$, the proposition above gives its dual as:
\begin{align*}
    (g_tu_A\Z^d)^*= & g_t^* u_A^*(\Z^d)^* \\
                     = & {}^t g_t^{-1} \cdot {}^t u_A^{-1} \Z^d\\
                     = & {}^t\begin{bmatrix}e^{t/m}I_m & 0 \\0 & e^{-t/n}I_n \end{bmatrix}^{-1}\cdot
                     {}^t\begin{bmatrix}I_m & A \\0 & I_n \end{bmatrix}^{-1}\Z^d\\
                     = & \begin{bmatrix}e^{-t/m}I_m & 0 \\0 & e^{t/n}I_n \end{bmatrix}\cdot
                     \begin{bmatrix}I_m & 0 \\-{}^tA & I_n \end{bmatrix}\Z^d\\
\end{align*}

Now observe that for $r> \frac{d}{2}$, $d+1-r < d+1-\frac{d}{2}=\frac{d}{2}+1\le \max(m,n)+1$ (which is the same as $d\le \max(m,n)$), and we have that
\begin{align*}
         & \liminf_{t\to \infty} \h_{A,r}=-\infty \\
    \iff & \liminf_{t \to \infty} \lambda_{r}(g_tu_A\Z^d)=0 \\
    \iff & 
    \liminf_{t \to \infty} \lambda_r(g_tu_A\Z^d)=0\\
    \iff &
    \limsup_{t \to \infty} \lambda_{d+1-r}((g_tu_A\Z^d)^*)= \infty\\
    \iff &
    \limsup_{t \to \infty} \lambda_{d+1-r}\left(\begin{bmatrix}e^{-t/m}I_m & 0 \\0 & e^{t/n}I_n \end{bmatrix}\cdot\begin{bmatrix}I_m & 0 \\-{}^tA & I_n \end{bmatrix}\Z^d \right ) =\infty \\
    \iff & 
    \limsup_{t \to \infty} \h_{-^tA,d+1-r}(t)=\infty \\
    \iff &
    \liminf_{t \to \infty} -\h_{-^tA,d+1-r}(t)=-\infty
\end{align*}
and that
\begin{align*}
    &\liminf_{t \to \infty} \h_{A,r+1}(t)>-\infty \\
    \iff &
    \liminf_{t \to \infty} \lambda_{r+1}(g_tu_A\Z^d)>0 \\
    \iff & 
    \liminf_{t \to \infty} \lambda_{r+1}(g_tu_A\Z^d)>0\\
    \iff &
    \limsup_{t \to \infty} \lambda_{d-r}((g_tu_A\Z^d)^*)< \infty\\
    \iff &
    \limsup_{t \to \infty} \lambda_{d-r}\left(\begin{bmatrix}e^{-t/m}I_m & 0 \\0 & e^{t/n}I_n \end{bmatrix}\cdot\begin{bmatrix}I_m & 0 \\-{}^tA & I_n \end{bmatrix}\Z^d \right ) <\infty \\
    \iff & 
    \limsup_{t \to \infty} \h_{-^tA,d-r}(t)<\infty \\
    \iff &
    \liminf_{t \to \infty} -\h_{-^tA,d-r}(t)>-\infty
\end{align*}

Therefore, by replacing templates $\f$ with $-\f$ in our $\mathcal{S}$ and observe that if we change any $m\times n$ matrix $A$ $D(\mathcal{S})$ to its negative transpose $-{t}^A$, then the Hausdorff and Packing dimensions of $D(\mathcal{S})$ will not change and the result we obtained above for $d\le \max(m,n)$ applies. This proves 
$$\dim_P(\BA(r+1)-\BA(r))=\dim_H(\BA(r+1)-\BA(r))=mn,$$
for $d>\max(m,n)$ and completes the proof of the theorem.

\end{proof}

\section*{Acknowledgement}
The author would like to thanks Professors Tushar Das, Dmitry Kleinbock, Nimish Shah and Barak Weiss for the encouragement and helpful discussions on this project. 

\printbibliography[
heading=bibintoc,
title={Bibliography}
]
\end{document}